\newtheorem{theorem}{Theorem}[section]
\newtheorem{lemma}[theorem]{Lemma}
\newtheorem{proposition}[theorem]{Proposition}
\newtheorem{corollary}[theorem]{Corollary}
\theoremstyle{definition}
\newtheorem{example}[theorem]{Example}
\newtheorem{remark}[theorem]{Remark}
\numberwithin{equation}{section}
\begin{document}
\title{$B$-spline interpolation problem in Hilbert $C^*$-modules}
\author[R. Eskandari, M. Frank, V. M. Manuilov, M. S. Moslehian]{Rasoul Eskandari$^1$, Michael Frank$^2$, Vladimir M. Manuilov$^3$,  \MakeLowercase{and} Mohammad Sal Moslehian$^4$}

\address{$^1$Department of Mathematics, Faculty of Science, Farhangian University, Tehran, Iran.}
\email{eskandarirasoul@yahoo.com}

\address{$^2$Hochschule f\"ur Technik, Wirtschaft und Kultur (HTWK) Leipzig, Fakult\"at Informatik und Medien, PF 301166, D-04251 Leipzig, Germany.}
\email{michael.frank@htwk-leipzig.de}

\address{$^3$ Moscow Center for Fundamental and Applied Mathematics, and Department of Mechanics and Mathematics, Moscow State University, Moscow, 119991, Russia.} 
\email{manuilov@mech.math.msu.su}

\address{$^4$Department of Pure Mathematics, Center of Excellence in Analysis on Algebraic Structures (CEAAS), Ferdowsi University of Mashhad, P. O. Box 1159, Mashhad 91775, Iran.}
\email{moslehian@um.ac.ir; moslehian@yahoo.com}

\subjclass[2010]{46L08, 46L05, 47A62.}

\keywords{$B$-spline interpolation problem; Hilbert $C^*$-module; self-duality; orthogonal complement.}

\begin{abstract}
We introduce the $B$-spline interpolation problem corresponding to a $C^*$-valued sesquilinear form on a Hilbert $C^*$-module and study its basic properties as well as the uniqueness of solution. We first study the problem in the case when the Hilbert $C^*$-module is self-dual. Extending a bounded $C^*$-valued sesquilinear form on a Hilbert $C^*$-module to a sesquilinear form on its second dual, we then provide some necessary and sufficient conditions for the $B$-spline interpolation problem to have a solution. 
\par Passing to the setting of Hilbert $W^*$-modules, we present our main result by characterizing when the spline interpolation problem for the extended $C^*$-valued sesquilinear to the dual $\mathscr{X}'$ of the Hilbert $W^*$-module $\mathscr{X}$ has a solution. As a consequence, we give a sufficient condition that for an orthogonally complemented submodule of a self-dual Hilbert $W^*$-module $\mathscr{X}$ is orthogonally complemented with respect to another $C^*$-inner product on $\mathscr{X}$.  Finally, solutions of the $B$-spline interpolation problem for Hilbert $C^*$-modules over $C^*$-ideals of $W^*$-algebras are extensively discussed. Several examples are provided to illustrate the existence or lack of a solution for the problem.
\end{abstract}
\maketitle

\section{Introduction}

The notion of \emph{spline} was introduced by Schoenberg \cite{SCH} and then, it has been applied in approximation theory, statistics, and numerical analysis, see \cite{ARC}. Later, this concept was generalized by several mathematicians as those elements in a Hilbert space $\mathscr{H}$ that minimize a specific bilinear form over translates of a certain null space. 

One of the most significant generalization of splines is due to Lucas \cite{LUC} in which he gives a formalization of the notion of $B$-spline given by others in the last years.  More precisely,  let $(\mathscr{H}, \langle\cdot,\cdot\rangle)$ be a real Hilbert space, let $\Lambda$ be a family of continuous linear forms over $\mathscr{H}$, and let $B(x, y)$ be a bounded bilinear form on $\mathscr{H} \times \mathscr{H}$ such that $B(x,x) \geq 0$ for all $x\in N(\Lambda) = \{x\in \mathscr{H}: \lambda(x) = 0 {\rm ~for ~all~} \lambda\in\Lambda\}$. 

A vector $s \in \mathscr{H}$ is called a \emph{$B$-spline} if $B(s, y) = 0$ for all $y \in N(\Lambda)$. The closed linear space of all $B$-splines is denoted by $Sp(B, \Lambda)$. For $x\in  \mathscr{H}$, an element $s \in \mathscr{H}$ is said to be a \emph{$\Lambda$-interpolate} of $x$ if $(s-x) \in N(\Lambda)$. If $s$ is also in $Sp(B, \Lambda)$, then $s$ is called an \emph{$Sp(B,\Lambda)$-interpolate} of $x$.
Lucas \cite{LUC} gives conditions that insure the existence of an $Sp(B, \Lambda)$-interpolate of any element in $\mathscr{H}$. One of the conditions is that the system $(\mathscr{H}, \Lambda, B, N(\Lambda))$ is well-posed in the sense that if $N_1:=\{x\in N(\Lambda): B(x,x)=0\}$, then $B(x,y)=0$ for all $x\in\mathscr{H}$ and all $y\in N_1$; moreover, if $N_2$  is the orthogonal complement of $N_1$ in $N(\Lambda)$, then $B$ is definite on $N_2$, that is, $B(x,x)\geq c\|x\|^2$ for all $x\in N_2$; see also \cite{ABR} for some existence and uniqueness conditions. 


Inspired by the theory of $B$-splines in Hilbert space setting, we investigate the $B$-spline interpolation problem in the framework of Hilbert modules over $C^*$-algebras and $W^*$-algebras. 

The paper is organized as follows. In the next section, we review some preliminaries required throughout the paper.

In Section 3, we introduce the $B$-spline interpolation problem corresponding to a $C^*$-valued sesquilinear form on a Hilbert $C^*$-module and give several examples to show the existence or the lack of a solution for the problem. We then give some characterizations under some mild conditions for the $B$-spline interpolation problem to have a solution and discuss the uniqueness of solution. Our characterizations are given when the closed subspace $\mathscr{Y}$ is orthogonally complemented in $\mathscr{X}$.  

Section 4 is devoted to study of the $B$-spline interpolation problem in the framework of Hilbert $W^*$-modules. We give necessary and sufficient conditions for the spline interpolation problem for the extended $C^*$-valued sesquilinear to the first dual of the Hilbert $W^*$-module to have a solution. As a consequence, we give a sufficient condition for that an orthogonally complemented submodule of a self-dual Hilbert $W^*$-module $\mathscr{X}$ is orthogonally complemented with respect to another $C^*$-inner product on $\mathscr{X}$. A concrete example is given to show that the conditions of the main result of the paper simultaneously occur. Another technical example shows that a crucial condition should be taken into account.

In Section 5 we deal with solutions of the $B$-spline interpolation problem for Hilbert $C^*$-modules over $C^*$-ideals of $W^*$-algebras. Several situations are discussed to give a comprehensive account of the problem in this setting. 


\section{Preliminaries}

The notion of a pre-Hilbert $C^*$-module is a natural generalization of that of an inner product space in which we allow the inner product to take its values in a $C^*$-algebra instead of the field of complex numbers. More precisely, a \emph{pre-Hilbert $C^*$-module} over a $C^{*}$-algebra $\mathscr A$ is a complex linear space $\mathscr{X}$ which is a right $\mathscr A$-module equipped with an $\mathscr A$-valued inner product $\langle\cdot,\cdot\rangle \,: \mathscr{X}\times \mathscr{X}\longrightarrow\mathscr A$ satisfying\\
(i) $\langle x, y + \lambda z\rangle =\langle x, y\rangle + \lambda\langle x, z\rangle$,\\
(ii) $\langle x, ya\rangle=\langle x, y\rangle a$,\\
(iii) $\langle x, y\rangle^*=\langle y, x\rangle$,\\
(iv) $\langle x, x\rangle\geq0$ and $\langle x, x\rangle=0$ if and only if $x=0$,\\
for all $x, y, z\in \mathscr{X}, a\in\mathscr A, \lambda\in\mathbb{C}$. As easy to see, the setting $\|x\|=\|\langle x, x\rangle\|^{\frac{1}{2}}$ defines a norm on $\mathscr{X}$. If $\mathscr{X}$ together with this norm is complete, then it is called a \emph{Hilbert $C^*$-module} over $\mathscr A$. The positive square root of $\langle x, x\rangle$ is denoted by $|x|$ for $x\in \mathscr{X}$. We say that a closed submodule $\mathscr{Y}$ of a Hilbert $C^*$-module $\mathscr{X}$ is orthogonally complemented if $\mathscr{X}=\mathscr{Y}\oplus \mathscr{Y}^{\perp}$, where $\mathscr{Y}^{\perp}=\{ x\in \mathscr{X} : \langle x,y\rangle=0$ for all $ y\in \mathscr{Y}\}$. 

Although Hilbert $C^*$-modules seem to be a natural generalization of Hilbert spaces, some of their basic properties are no longer valid in the setting of Hilbert $C^*$-modules in their full generality. For example, not any closed submodule of a Hilbert $C^*$-module is complemented and, not every bounded $C^*$-linear map on a Hilbert $C^*$-module is adjointable. 

If $\mathscr{X}'$ denotes the set of bounded $\mathscr{A}$-linear maps from $\mathscr{X}$ into $\mathscr{A}$, named as the dual of $\mathscr{X}$, then $\mathscr{X}'$ becomes a right $\mathscr{A}$-module equipped with the following actions:
\begin{eqnarray}\label{msm44}
(\rho+\lambda\tau)(x)=\rho(x)+\overline{\lambda}\tau(x)\quad {\rm~and~}\quad (\tau b)(x) =b^*\tau (x)
\end{eqnarray}
for $\tau \in \mathscr{X}', b\in \mathscr{A}, \lambda\in\mathbb{C}$ and $x\in \mathscr{X}$. Trivially, to every bounded $\mathscr{A}$-linear map $T: \mathscr{X} \to \mathscr{Y}$ one can associate a bounded $\mathscr{A}$-linear map $T':  \mathscr{Y}' \to  \mathscr{X}'$ defined by $T'(g)(x)=g(T(x))$ for $g\in  \mathscr{Y}'$.

For each $x\in \mathscr{X}$, one can define the map $\widehat{x}\in \mathscr{X}'$ by $\widehat{x}(y)=\langle x,y\rangle$ for $y\in \mathscr{X}$. It is easy to verify that the map $x \mapsto \widehat{x}$ is isometric and ${\mathscr{A}}$-linear. Hence one can identify ${\mathscr{X}}$ with $\widehat{\mathscr{X}}:=\{\widehat{x}: x\in \mathscr{X}\}$ as a closed submodule of $\mathscr{X}'$. 

A module $\mathscr{X}$ is called \emph{self-dual} if $\widehat{\mathscr{X}}=\mathscr{X}'$. For example, a unital $C^*$-algebra $\mathscr{A}$ is self-dual as a Hilbert $\mathscr{A}$-module via $\langle a,b\rangle=a^*b$. 

Given $x\in \mathscr{X}$, one can define $\dot{x}\in \mathscr{X}''$ by $\dot{x}(f)=f(x)^*\,\, (f\in\mathscr{X}')$. Then $x\mapsto \dot{x}$ gives rise to an isometric $\mathscr{A}$-linear map. We say that $\mathscr{X}$ is \emph{reflexive} if this map is surjective. It is known \cite[Chapter 4]{MT} that there is an $\mathscr{A}$-valued inner product on the second dual $\mathscr{X}''$ defined by $\langle F, G\rangle=F(\dot{G})$, where $\dot{G}(x):=G(\widehat{x})\,\,(x\in \mathscr{X})$. It is an extension of the inner product on $ \mathscr{X}$. In addition, the map $F\mapsto \dot{F}$ is an isometric inclusion, and $\dot{\dot{x}}=\widehat{x}$ because of
\begin{align*}
\dot{\dot{x}}(y)=\dot{x}(\widehat{y})=\widehat{y}(x)^*=\langle x,y\rangle=\widehat{x}(y).
\end{align*}
Thus we have the chain of inclusions as $ \mathscr{X}\subseteq  \mathscr{X}''\subseteq  \mathscr{X}'$, and every self-dual Hilbert $C^*$-module is reflexive, too.

If $\mathscr{A}$ is a $W^*$-algebra with the predual $\mathscr{A}_*$ consisting of all normal bounded linear functionals, then the $\mathscr{A}$-valued inner product $\langle\cdot,\cdot\rangle$ on $\mathscr{X}$ can be extended to an $\mathscr{A}$-valued inner product on $\mathscr{X}'$. We frequently use the construction given by \cite{Pa}: For a positive linear functional $f$ on $\mathscr{A}$, one can consider the semi-inner product $f(\langle \cdot, \cdot\rangle)$ on $\mathscr{X}$. Let $\mathscr{N}_f = \{x:f(\langle x,x\rangle)=0\}$. Then the quotient space $\mathscr{X}/\mathscr{N}_f$ equipped with the following inner product 
\begin{eqnarray}\label{Pas0}
(x + \mathscr{N}_f, y + \mathscr{N}_f)_f := f(\langle y,x\rangle )\quad (x, y \in \mathscr{X})
\end{eqnarray}
provides a pre-Hilbert space. Let us denote the Hilbert space completion
of $\mathscr{X}/\mathscr{N}_f$ by $\mathscr{H}_f$ and write $\|\cdot\|_f$ for the norm on $\mathscr{H}_f$ obtained from its inner product $(\cdot,\cdot)_f$. For $\tau\in \mathscr{X}'$, the map $x+\mathscr{N}_f \mapsto f(\tau(x))$ is a bounded linear functional with norm less than or equal to $\|\tau\|\,\|f\|^{1/2}$. From the Riesz representation theorem we conclude that there is a unique vector $\tau_f\in \mathscr{H}_f$ such that $\|\tau_f\|_f\leq \|\tau\|\|f\|^{1/2}$ and 
\begin{eqnarray}\label{Pas1}
(x+\mathscr{N}_f,\tau_f)_f=f(\tau (x))
\end{eqnarray}
for all $x\in \mathscr{X}$. Further construction of this type can be found in \cite{ARA, ESK,  FRA2}. The comprehensive result of Paschke reads as follows.

\begin{theorem}\cite[Theorem 3.2]{Pa}\label{th1}
Let $\mathscr{X}$ be a pre-Hilbert $C^*$-module over a $W^*$-algebra $\mathscr{A}$. The $\mathscr{A}$-valued inner product $\langle \cdot,\cdot\rangle$ can be extended to $\mathscr{X}'\times \mathscr{X}'$ in such a way as to make $\mathscr{X}'$ into a self-dual Hilbert $\mathscr{A}$-module. In particular, the extended inner product satisfies \begin{eqnarray}\label{Pas2}
\langle \tau,\widehat{x}\rangle=\tau (x)
\end{eqnarray}
and 
\begin{eqnarray}\label{Pas3}
f(\langle\tau,\rho\rangle) =(\tau_f,\rho_f)_f
\end{eqnarray}
for all $ x\in \mathscr{X}, \tau, \rho\in \mathscr{X}'$, and all normal positive linear functionals $f$ on $\mathscr{A}$.
\end{theorem}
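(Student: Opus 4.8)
My plan is to exploit the $W^*$-duality $\mathscr{A}=(\mathscr{A}_*)^*$ and simply \emph{declare} the extended inner product by the prescription \eqref{Pas3}. Fix $\tau,\rho\in\mathscr{X}'$. For each normal positive functional $f$ I already have the Riesz vectors $\tau_f,\rho_f\in\mathscr{H}_f$ of \eqref{Pas1}, and Cauchy--Schwarz together with the bound $\|\tau_f\|_f\le\|\tau\|\,\|f\|^{1/2}$ gives $|(\tau_f,\rho_f)_f|\le\|\tau\|\,\|\rho\|\,\|f\|$. Thus $f\mapsto(\tau_f,\rho_f)_f$ is a bounded function on the normal positive cone; if I can show it is additive and positively homogeneous there, it extends, via the Jordan decomposition of normal functionals, to a bounded linear functional on $\mathscr{A}_*$, and the element of $(\mathscr{A}_*)^*=\mathscr{A}$ representing it is the desired $\langle\tau,\rho\rangle$, tautologically satisfying \eqref{Pas3}.

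The step I expect to be the main obstacle is additivity of $f\mapsto(\tau_f,\rho_f)_f$; positive homogeneity is immediate since $\mathscr{N}_{tf}=\mathscr{N}_f$ and $(\cdot,\cdot)_{tf}=t(\cdot,\cdot)_f$ force $\tau_{tf}=\tau_f$ as vectors. For normal positive $f,g$, the identity $(\cdot,\cdot)_{f+g}=(\cdot,\cdot)_f+(\cdot,\cdot)_g$ makes $x+\mathscr{N}_{f+g}\mapsto(x+\mathscr{N}_f,x+\mathscr{N}_g)$ extend to an isometry $W=(V_f,V_g)\colon\mathscr{H}_{f+g}\hookrightarrow\mathscr{H}_f\oplus\mathscr{H}_g$, and testing \eqref{Pas1} against $x+\mathscr{N}_{f+g}$ yields the compatibility relation $\tau_{f+g}=V_f^*\tau_f+V_g^*\tau_g$. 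Additivity is then equivalent to $V_f\tau_{f+g}=\tau_f$ and $V_g\tau_{f+g}=\tau_g$, i.e. to $(\tau_f,\tau_g)$ lying in the range of $W$. This last point is exactly where the $W^*$-structure must be used and is not formal: the clean way through it is to fix a faithful normal representation $\mathscr{A}\subseteq B(H)$, so that every $\mathscr{H}_f$ embeds in the single Hilbert space $\mathscr{X}\otimes_{\mathscr{A}}H$ and each $\tau\in\mathscr{X}'$ corresponds to one operator $T_\tau\in B(H,\mathscr{X}\otimes_{\mathscr{A}}H)$, with $\widehat{x}\mapsto(\xi\mapsto x\otimes\xi)$; then $\langle\tau,\rho\rangle=T_\tau^*T_\rho$ is a single element of $\mathscr{A}$ (it is weakly approximated by scalars $\langle x_\alpha,y_\beta\rangle$ and $\mathscr{A}$ is weakly closed), so $f(T_\tau^*T_\rho)$ is automatically additive in $f$. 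I would use this representation to legitimise \eqref{Pas3} and would regard establishing it as the technical heart.

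Granting the existence of $\langle\cdot,\cdot\rangle$, the algebraic axioms fall out by testing against all normal positive $f$, which separate the points of $\mathscr{A}$ and detect positivity. Sesquilinearity and $\mathscr{A}$-linearity transfer from the corresponding properties of the forms $(\cdot,\cdot)_f$; the symmetry $\langle\tau,\rho\rangle^*=\langle\rho,\tau\rangle$ follows from $f(a^*)=\overline{f(a)}$ and $\overline{(\tau_f,\rho_f)_f}=(\rho_f,\tau_f)_f$; and $f(\langle\tau,\tau\rangle)=\|\tau_f\|_f^2\ge0$ for all such $f$ gives $\langle\tau,\tau\rangle\ge0$, with definiteness because $\langle\tau,\tau\rangle=0$ forces every $\tau_f=0$, hence $\tau(x)=0$ for all $x$. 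For \eqref{Pas2} I note that the Riesz vector of $\widehat{x}$ is $\widehat{x}_f=x+\mathscr{N}_f$, whence \eqref{Pas3} and \eqref{Pas1} give $f(\langle\tau,\widehat{x}\rangle)=f(\tau(x))$ for every normal positive $f$, and separation yields $\langle\tau,\widehat{x}\rangle=\tau(x)$; in particular the extended form restricts to the original inner product on $\widehat{\mathscr{X}}\cong\mathscr{X}$.

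Finally, for self-duality of $\mathscr{X}'$, let $\Psi$ be a bounded $\mathscr{A}$-linear functional on $\mathscr{X}'$. Composing $\Psi$ with $x\mapsto\widehat{x}$ yields a bounded $\mathscr{A}$-linear map $\mathscr{X}\to\mathscr{A}$, i.e. an element $\sigma\in\mathscr{X}'$, normalised so that $\widehat{\sigma}=\langle\sigma,\,\cdot\,\rangle$ and $\Psi$ agree on $\widehat{\mathscr{X}}$. It then remains to show that a bounded module functional on $\mathscr{X}'$ is determined by its restriction to $\widehat{\mathscr{X}}$; in the representation picture this is the weak density of $\{L_x\}$ in the weakly closed operator space representing $\mathscr{X}'$, combined with weak continuity and the uniform bound of the first paragraph. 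I expect the bookkeeping in this density argument, rather than any single estimate, to be the remaining real work, after which $\Psi=\widehat{\sigma}$ on all of $\mathscr{X}'$ and $\mathscr{X}'$ is self-dual.
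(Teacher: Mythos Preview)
The paper does not prove this theorem at all: it is stated as a quotation of Paschke's result \cite[Theorem 3.2]{Pa} and no proof is given in the paper. So there is nothing in the paper to compare your proposal against.

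For what it is worth, your overall strategy---defining $\langle\tau,\rho\rangle$ as the element of $(\mathscr{A}_*)^*=\mathscr{A}$ represented by the bounded functional $f\mapsto(\tau_f,\rho_f)_f$ on the normal positive cone, and then reading off the axioms by testing against all normal positive $f$---is exactly Paschke's original line of attack. You have correctly isolated additivity in $f$ as the non-formal step. Your proposed detour through a faithful normal representation $\mathscr{A}\subseteq B(H)$ and the operator $T_\tau^*T_\rho$ is a legitimate way to force this, but it is heavier machinery than necessary and you should be careful that the identification of $\mathscr{H}_f$ inside $\mathscr{X}\otimes_{\mathscr{A}}H$ and the claim $T_\tau^*T_\rho\in\mathscr{A}$ do not themselves smuggle in what you are trying to prove. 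Paschke handles additivity directly: from $f\le f+g$ one gets a contraction $\mathscr{H}_{f+g}\to\mathscr{H}_f$ and checks by a density argument that it carries $\tau_{f+g}$ to $\tau_f$, which is precisely your condition $V_f\tau_{f+g}=\tau_f$. If you want a self-contained write-up, proving that compatibility directly is cleaner than invoking a representation.
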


By an \emph{$\mathscr{A}$-sesquilinear} (bounded) form on a Hilbert $\mathscr{A}$-module $\mathscr{X}$ we mean a map $B: \mathscr{X} \times\mathscr{X}\to \mathscr{A}$ such that it is anti-$\mathscr{A}$-linear in the first variable and $\mathscr{A}$-linear in the second one.  We say that it is \emph{positive} on a set $\mathscr{Y}$ if $B(y,y)\geq0$ for all $y \in \mathscr{Y}$.

 It is \emph{elliptic} on a set $\mathscr{Y}$ if $B(y,y)\geq c\langle y, y\rangle$ for any $y\in \mathscr{Y}$ and some positive real constant $c$. For Hilbert $C^*$-modules we consider another ellipticity condition: $B$ is \emph{coercive} on $\mathscr{Y}$ if $B$ is positive on $\mathscr{Y}$ and if there exist $c,k>0$ such that for any pure state $f$ on $\mathscr A$ and any $x\in \mathscr{Y}$ there exists a unit vector $y\in \mathscr{Y}$ such that $f(|y|^2)\geq k$ and $|f(B(x,y))|^2\geq cf(|x|^2)f(|y|^2)$. For Hilbert spaces these two conditions are equivalent. Indeed, the second condition means that there exists $c>0$ such that for any $x\in \mathscr{Y}$ there exists $y\in \mathscr{Y}$ such that $|B(x,y)|\geq c\|x\|\|y\|$. Let $T$ satisfy $B(x,y)=\langle Tx,y\rangle$, and suppose that $T$ is not bounded from below. Then there exists a sequence of unit vectors $\{x_n\}$ such that $Tx_n\to 0$. Let $y_n$ satisfy $\|y_n\|=1$ and $|\langle Tx_n,y_n\rangle|\geq c\|x_n\|\|y_n\|$, which gives a contradiction. Hence, $T$ is bounded from below. It is also positive, hence $|B(x,x)|\geq c\|x\|^2$ for any $x\in \mathscr{Y}$. In the opposite direction, if $|B(x,x)|\geq c\|x\|^2$ for any $x\in \mathscr{Y}$ then one can take $y=x$ to satisfy the second condition. 

Throughout the paper, let $\mathscr{A}$ be a $C^*$-algebra ($W^*$-algebra if we explicitly state it) whose pure state space is denoted by $ \mathcal P\mathcal S(\mathscr{A})$, and let $\mathscr{X}$ denote a Hilbert $\mathscr{A}$-module. The reader is referred to \cite{R} for terminology and notation on $C^*$-algebra and to \cite{LAN, MT, FRA1} for some basic knowledge on the theory of Hilbert $C^*$-modules.


\section{$B$-splines in Hilbert $C^*$-modules}

Let $\mathscr{Y}$ be a closed submodule of $\mathscr{X}$. Let $B:\mathscr{X}\times \mathscr{X}\to \mathscr{A}$ be a bounded $\mathscr{A}$-sesqulinear form.
An element $s\in \mathscr X$ is said to be a \emph{$B$-spline} if
\[
B(s,y)=0 \qquad 
\]
for all $y\in \mathscr{Y}$.

The \emph{\bf $B$-spline interpolation problem} asks whether for each $x\in \mathscr{X}$ there exists a $B$-spline element $s$ in the coset $x+\mathscr{Y}$.

\vspace{1cm}

\begin{example}
\begin{itemize}
\item[]
\item[(1)] 
Suppose that $P$ is a non-trivial projection on a Hilbert $C^*$-module $\mathscr{X}$ and set $B(x,y):=\langle P(x),y\rangle,\,\, x,y\in\mathscr{X}$. Then there is an orthogonal decomposition $\mathscr{X}={\rm ran}(P)\oplus\ker(P)$. Let $\mathscr{Z}\subseteq\ker(P)$ be a closed submodule and set $\mathscr{Y}:={\rm ran}(P)\oplus \mathscr{Z}\subseteq\mathscr{X}$. Given $x \in \mathscr{X}$ the element $s \in x+\mathscr{Y}$ can be selected as $s=(1-P)(x)$, i.e. the $B$-spline interpolation has a solution. It might be not unique when $\mathscr{Z}\neq\{0\}$. Indeed, let $z\in\mathscr{Z}$, and let $s=(1-P)(x)+z$. Then $s-x=-P(x)+z\in\mathscr{Y}$ and $B(s,y)=\langle P((1-P)(x)+z),y\rangle=\langle P(z),y\rangle=0$ for any $y\in\mathscr{X}$.

\item[(2)] Consider a Hilbert space $\mathscr{H}$ as a Hilbert $C^*$-module over the $C^*$-algebra  $\mathbb{B}(\mathscr{H})$ of all bounded linear operators on $\mathscr{H}$ under the $C^*$-inner product $[x,y]:=x\otimes y$, where $x\otimes y$ is defined by $(x\otimes y)(z)=\langle z,y\rangle x$, and the actions the usual addition, the scalar multiplication $\lambda\cdot x=\bar{\lambda}x$, and the module right action $x\cdot T=T^*(x)$. Then the $B$-spline interpolation problem has no solution for any given nontrivial closed subspace $\mathscr{Y}$ of $\mathscr{X}$ because of the exhausting set of partial isometries of pairs of one-dimensional subspaces of $\mathscr{H}$. 

\item[(3)] Let $\mathscr{H}$ be an infinite-dimensional Hilbert space, $\mathscr{A}=\mathbb{B}(\mathscr{H})$, and let $\mathbb{K}(\mathscr{H})$ be the norm-closed two-sided ideal of $\mathbb{B}(\mathscr{H})$ of all compact operators on $\mathscr{H}$. Let $\mathscr{X}$ be $\mathscr{A}$ with the Hilbert $\mathscr{A}$-module operations inherited from the algebraic operations in $\mathscr{A}$, in particular,  $\langle T,S \rangle = T^*S$. Let $\mathscr{Y}$ be $\mathbb{K}(\mathscr{H})$. Then for $B(\cdot,\cdot)=\langle \cdot,\cdot \rangle$ and $\mathscr{Y}$, the B-spline interpolation problem has no solution. Similar results hold for any non-unital $C^*$-algebra $\mathscr{Y}=\mathscr{B}$ and its multiplier $C^*$-algebra $\mathscr{A}=\mathscr{X}=M(\mathscr{B})$.
\end{itemize}
\end{example}

In \cite[Theorem 2.3.6]{R} it is shown that in the setting of Hilbert spaces if $\sigma$ is a bounded sesquilinear form on $\mathscr{H}$, then there is a unique bounded linear operator $U$ on $\mathscr{H}$ such that 
\[
\sigma(x,y)=\langle U(x),y\rangle.
\]
In the next theorem, we show that the above representation is valid in a self-dual Hilbert $C^*$-module. To achieve it we need a lemma.

\begin{lemma}\label{thsman}
	Let $\mathscr{X},\mathscr Z$ be Hilbert $\mathscr{A}$-modules. Let $B:\mathscr{X}\times \mathscr{Z}\to \mathscr{A}$ be a bounded $\mathscr{A}$-sesquilinear form on $\mathscr{X}\times \mathscr{Z}$.
		Then there is a unique bounded $\mathscr{A}$-linear map $T: \mathscr{X} \to \mathscr{Z}'$ such that 
	\[
	B (x,z)=T(x)(z)\qquad (x\in \mathscr{X}, \, z\in\mathscr{Z}).
	\] 
\end{lemma}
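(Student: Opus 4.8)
The plan is to realize $T$ by ``currying'' the form $B$, that is, by freezing the first variable. Given $x\in\mathscr{X}$, define a map $T(x)\colon\mathscr{Z}\to\mathscr{A}$ by $T(x)(z):=B(x,z)$. Since $B$ is $\mathscr{A}$-linear in the second variable, $T(x)$ is additive, satisfies $T(x)(z_1+\lambda z_2)=T(x)(z_1)+\lambda T(x)(z_2)$ and $T(x)(za)=T(x)(z)a$, and from boundedness of $B$ we obtain $\|T(x)(z)\|=\|B(x,z)\|\le\|B\|\,\|x\|\,\|z\|$; hence $T(x)$ is a bounded $\mathscr{A}$-linear functional, i.e.\ $T(x)\in\mathscr{Z}'$ with $\|T(x)\|\le\|B\|\,\|x\|$. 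This is the only reasonable candidate, and it forces the remaining verifications.

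Next I would check that $x\mapsto T(x)$ is itself bounded and $\mathscr{A}$-linear, where $\mathscr{Z}'$ carries the module structure \eqref{msm44}. Here the anti-$\mathscr{A}$-linearity of $B$ in the first variable is exactly what is needed to match the twisted operations on the dual: for $\lambda\in\mathbb{C}$ and $a\in\mathscr{A}$ one computes, for every $z\in\mathscr{Z}$,
\[
T(x_1+\lambda x_2)(z)=B(x_1+\lambda x_2,z)=B(x_1,z)+\overline{\lambda}B(x_2,z)=\bigl(T(x_1)+\lambda T(x_2)\bigr)(z),
\]
and
\[
T(xa)(z)=B(xa,z)=a^*B(x,z)=a^*T(x)(z)=\bigl(T(x)a\bigr)(z),
\]
the last equalities using precisely the definitions $(\rho+\lambda\tau)(z)=\rho(z)+\overline{\lambda}\tau(z)$ and $(\tau a)(z)=a^*\tau(z)$ from \eqref{msm44}. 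Thus $T(x_1+\lambda x_2)=T(x_1)+\lambda T(x_2)$ and $T(xa)=T(x)a$, so $T$ is $\mathscr{A}$-linear, and the estimate above yields $\|T\|\le\|B\|$, so $T$ is bounded.

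Finally, uniqueness is immediate: if $S\colon\mathscr{X}\to\mathscr{Z}'$ also satisfies $S(x)(z)=B(x,z)$ for all $x$ and $z$, then $S(x)$ and $T(x)$ agree as functionals on $\mathscr{Z}$ for each $x$, whence $S=T$. I do not expect a genuine obstacle here; the content of the lemma is conceptual rather than technical, namely that although $z\mapsto B(x,z)$ need not be represented by a vector of $\mathscr{Z}$ (as it would be over a Hilbert space, or for a self-dual module), it always lives in the dual $\mathscr{Z}'$. The only point demanding care is the bookkeeping: one must keep the conjugation in the scalar action and the adjoint in the module action of \eqref{msm44} aligned with the anti-linearity of $B$ in its first slot, and these match automatically.
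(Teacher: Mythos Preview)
Your proof is correct and follows exactly the same approach as the paper: define $T(x)(z)=B(x,z)$, verify that $T(x)\in\mathscr{Z}'$ and that $x\mapsto T(x)$ is bounded $\mathscr{A}$-linear, then observe uniqueness. The paper's own proof is terse (``It is easy to verify \ldots\ The uniqueness is obvious''), so you have simply spelled out the routine checks---in particular the compatibility of the anti-linearity of $B$ in its first slot with the twisted operations \eqref{msm44} on $\mathscr{Z}'$---that the paper leaves to the reader.
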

	\begin{proof}
	For any $x\in \mathscr{X}$ define $T_x:\mathscr{Z}\to \mathscr{A}$ by $T_x(z)=B(x,z)$. It is easy to verify that $T_x\in \mathscr Z'$ and $T:\mathscr{X}\to \mathscr{Z}'$ defined by $T(x)=T_x\,\, (x\in \mathscr{X})$ is a bounded $\mathscr{A}$-linear map. The uniqueness is obvious.
\end{proof}
\begin{theorem}\label{ths}
	  Let $\mathscr{X}$ be a self-dual Hilbert $\mathscr{A}$-module. Let $B:\mathscr{X}\times \mathscr{X}\to \mathscr{A}$ be a bounded $\mathscr{A}$-sesqulinear form on $\mathscr{A}$.
		Then there is a unique operator $T\in \mathscr L(\mathscr{X})$ such that 
	\[
	B (x,y)=\langle T(x),y\rangle\qquad (x,y\in \mathscr{X}).
	\] 
\end{theorem}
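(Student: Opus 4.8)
The plan is to combine Lemma~\ref{thsman} with the self-duality of $\mathscr{X}$ to first produce a bounded $\mathscr{A}$-linear operator representing $B$, and then to upgrade it to an adjointable operator by representing the ``adjoint'' form $B^*$ in the same way.

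First I would apply Lemma~\ref{thsman} with $\mathscr{Z}=\mathscr{X}$ to obtain a unique bounded $\mathscr{A}$-linear map $T_0:\mathscr{X}\to\mathscr{X}'$ with $B(x,y)=T_0(x)(y)$ for all $x,y\in\mathscr{X}$. Since $\mathscr{X}$ is self-dual, we have $\mathscr{X}'=\widehat{\mathscr{X}}$, so for each $x$ there is a unique element of $\mathscr{X}$, which I denote $T(x)$, satisfying $T_0(x)=\widehat{T(x)}$; that is,
\[
B(x,y)=T_0(x)(y)=\widehat{T(x)}(y)=\langle T(x),y\rangle.
\]
Because the identification $x\mapsto\widehat{x}$ is isometric and $\mathscr{A}$-linear, the resulting map $T:\mathscr{X}\to\mathscr{X}$ inherits boundedness and $\mathscr{A}$-linearity from $T_0$.

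The main point is to show that $T$ is adjointable, i.e.\ $T\in\mathscr{L}(\mathscr{X})$, and not merely bounded and $\mathscr{A}$-linear. For this I would introduce the adjoint form $B^{*}(x,y):=B(y,x)^{*}$, which one checks is again a bounded $\mathscr{A}$-sesquilinear form on $\mathscr{X}\times\mathscr{X}$ (anti-$\mathscr{A}$-linear in the first variable and $\mathscr{A}$-linear in the second, using axiom (iii), $\langle x,y\rangle^{*}=\langle y,x\rangle$). Running the previous paragraph for $B^{*}$ yields a bounded $\mathscr{A}$-linear map $S:\mathscr{X}\to\mathscr{X}$ with $B^{*}(x,y)=\langle S(x),y\rangle$. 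Then for all $x,y\in\mathscr{X}$,
\[
\langle S(x),y\rangle=B^{*}(x,y)=B(y,x)^{*}=\langle T(y),x\rangle^{*}=\langle x,T(y)\rangle,
\]
so that, taking adjoints, $\langle T(y),x\rangle=\langle y,S(x)\rangle$ for all $x,y$. This exhibits $S$ as an adjoint of $T$, whence $T$ is adjointable with $T^{*}=S$ and therefore $T\in\mathscr{L}(\mathscr{X})$.

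Finally, uniqueness is routine: if $\langle T_1(x),y\rangle=\langle T_2(x),y\rangle$ for all $x,y$, then choosing $y=(T_1-T_2)(x)$ and invoking axiom (iv) of the inner product forces $(T_1-T_2)(x)=0$, so $T_1=T_2$. I expect the only genuine obstacle to be the adjointability step; the representation itself is a direct consequence of Lemma~\ref{thsman} together with self-duality, while the passage to $B^{*}$ is precisely what converts ``bounded $\mathscr{A}$-linear'' into ``adjointable.''
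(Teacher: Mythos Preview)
Your proof is correct. The construction of $T$ via Lemma~\ref{thsman} and self-duality is exactly what the paper does. The one difference lies in the adjointability step: the paper simply invokes the general fact (citing \cite[Proposition~2.5.2]{MT}) that on a self-dual Hilbert $C^*$-module every bounded $\mathscr{A}$-linear map is automatically adjointable, and stops there. You instead construct the adjoint by hand, applying the same representation argument to the conjugate form $B^*(x,y)=B(y,x)^*$ to obtain $S$ and then checking $\langle T(y),x\rangle=\langle y,S(x)\rangle$. Your route is more self-contained and in effect reproves the cited proposition in this particular instance; the paper's route is shorter but relies on an external reference. Both are perfectly valid, and your explicit uniqueness argument is also fine (the paper leaves uniqueness implicit).
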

	\begin{proof}
Following the construction in Lemma \ref{thsman} and due to $\mathscr{X}$ is self-dual,  there is $w_x\in \mathscr{X}$ such that $T_x=\widehat{w_x}$. Let $T:\mathscr{X}\to \mathscr{X}$ be defined by $$T(x)=w_x\qquad (x\in \mathscr{X}).$$
One can easily verify that $T$ is a bounded $\mathscr{A}$-linear map. Since $\mathscr{X}$ is self-dual, by \cite[Proposition 2.5.2]{MT}, $T$ is adjointable.
\end{proof}

Let $B:\mathscr{X}\times \mathscr{X}\to \mathscr{A}$ be a bounded $\mathscr{A}$-sesqulinear form on a Hilbert $\mathscr{A}$-module $\mathscr{X}$. Let $\mathscr{Y}$ be a closed submodule of $\mathscr{X}$.
Set 
\begin{align*}
\widecheck{\mathscr{Y}}=\{ \widecheck{y}\in \mathscr{Y}:B(\widecheck{y},y)=0\,\, {\rm ~for~all~} y\in \mathscr{Y}\},\\
\widetilde{\mathscr{Y}}=\{\widetilde{y}\in \mathscr{Y}: B(y,\widetilde{y})=0\,\, {\rm ~for~all~} y\in \mathscr{Y}\},
\end{align*}
and 
\[
\mathscr{Y}_1=\{y\in \mathscr{Y}:B(y,y)=0\}.
\]

Clearly, $\widecheck{\mathscr{Y}}\subseteq \mathscr{Y}_1$ and $\widetilde{\mathscr{Y}} \subseteq \mathscr{Y}_1$. For example, if $B(\cdot,\cdot)$ is skew-symmetric, i.e. $B(x,y)=-B(y,x)$ on $\mathscr{X}$, then always $\mathscr{Y}_1 = \mathscr{Y}$, but the other two sets are most often smaller. The sets $\widecheck{\mathscr{Y}}$ and $\widetilde{\mathscr{Y}}$ are called the right and left radical of $\mathscr{Y}$, respectively.
 Moreover, for bounded $\mathscr{A}$-sesquilinear forms both $\widecheck{\mathscr{Y}}$ and $\widetilde{\mathscr{Y}}$ are norm-complete $\mathscr{A}$-submodules of $\mathscr{Y}$. The more, $\mathscr{Y}_1$ is invariant under the action of $\mathscr{A}$.
\begin{proposition} \label{uniqueness}
Let $\mathscr{X}$ be a Hilbert $\mathscr{A}$-module, $\mathscr{Y}$ be a closed submodule of $\mathscr{X}$, and $B:\mathscr{X}\times \mathscr{X}\to \mathscr{A}$ be a bounded $\mathscr{A}$-sesqulinear form on $\mathscr{X}$. Suppose that the $B$-spline interpolation problem has a solution for $\mathscr{Y}$ for an element $x\in {\mathscr{X}}$. Then the following two conditions are equivalent:
\begin{enumerate}
\item The solution of the B-spline problem for $x$ is unique.
\item $\widecheck{\mathscr Y}=\{0\}$.
\end{enumerate}
Since condition (ii) is a global one, any other existing solution of the $B$-spline problem with respect to $\mathscr{Y}$ for other elements $x \in \mathscr{X}$ has to be always unique in the case when condition (ii) holds.
\end{proposition}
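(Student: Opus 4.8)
The plan is to show that, once one solution exists, the entire solution set for a fixed $x$ is a coset of $\widecheck{\mathscr{Y}}$, and then to read off the equivalence directly from that description. First I would fix a $B$-spline $s\in x+\mathscr{Y}$, which the hypothesis guarantees. Given any second solution $s'\in x+\mathscr{Y}$ with $B(s',y)=0$ for all $y\in\mathscr{Y}$, I would observe that $s'-s\in\mathscr{Y}$, since both elements lie in the same coset $x+\mathscr{Y}$, and that the additivity of $B$ in its first slot (inherited from anti-$\mathscr{A}$-linearity) gives $B(s'-s,y)=B(s',y)-B(s,y)=0$ for every $y\in\mathscr{Y}$. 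Hence $s'-s\in\widecheck{\mathscr{Y}}$. Conversely, for any $\widecheck{y}\in\widecheck{\mathscr{Y}}$ the element $s+\widecheck{y}$ again lies in $x+\mathscr{Y}$ and satisfies $B(s+\widecheck{y},y)=B(s,y)+B(\widecheck{y},y)=0$ for all $y\in\mathscr{Y}$, so it is again a solution. Together these two inclusions identify the full solution set for $x$ with the coset $s+\widecheck{\mathscr{Y}}$.

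With this description in hand the equivalence is immediate: the solution for $x$ is unique precisely when $s+\widecheck{\mathscr{Y}}=\{s\}$, and this happens if and only if $\widecheck{\mathscr{Y}}=\{0\}$, which yields both implications $(1)\Leftrightarrow(2)$ at once. For the closing remark I would stress that the set $\widecheck{\mathscr{Y}}$ is defined purely in terms of $B$ and $\mathscr{Y}$, with no reference to $x$. Consequently the same coset description governs the solution set of every element $x'\in\mathscr{X}$ that happens to admit a solution, so as soon as $\widecheck{\mathscr{Y}}=\{0\}$ all such solutions are forced to be unique simultaneously, which is the content of the final sentence of the statement.

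Since the argument is purely algebraic, I do not expect a genuine analytic obstacle. The one point that needs a little care is verifying that the difference of two solutions lands specifically in the right radical $\widecheck{\mathscr{Y}}$ rather than merely in the larger set $\mathscr{Y}_1$: one must check both that it belongs to $\mathscr{Y}$ and that it is annihilated by $B(\cdot,y)$ for every $y\in\mathscr{Y}$, not just that $B$ vanishes on its diagonal. This is exactly where the defining condition of $\widecheck{\mathscr{Y}}$ enters, and it is what singles out $\widecheck{\mathscr{Y}}$ (and not $\mathscr{Y}_1$ or the left radical $\widetilde{\mathscr{Y}}$) as the correct obstruction to uniqueness.
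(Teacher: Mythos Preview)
Your proposal is correct and follows essentially the same approach as the paper: both arguments show that the difference of two solutions lies in $\widecheck{\mathscr{Y}}$ and that adding an element of $\widecheck{\mathscr{Y}}$ to a solution yields another solution. Your packaging of these two facts as ``the solution set is the coset $s+\widecheck{\mathscr{Y}}$'' is a slightly cleaner formulation, but the underlying computations are identical to the paper's.
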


\begin{proof}
Suppose, $\widecheck{\mathscr Y}=\{0\}$. Take an element $x \in  {\mathscr{X}}$ and two derived solutions of the B-spline problem for x, say $s_1$ and $s_2$. Then $B(s_1-s_2,y)=B(s_1,y)-B(s_2,y)=0$ for any $y \in {\mathscr{Y}}$ by definition. If $s_1=x+y_1$ and $s_2=x+y_2$ with $y_1, y_2 \in \mathscr{Y}$, then this equality is transformed to the equality $B(y_1-y_2,y)=0$ for any $y \in {\mathscr{Y}}$, i.e. $y_1=y_2$ by our assumption, and hence, $s_1=s_2$. So the solution is unique.

Conversely, if $\widecheck{\mathscr Y}\not=\{0\}$, then there is an element $y_0 \in \widecheck{\mathscr Y} \subseteq \mathscr{Y}$ with $y_0 \not= 0$. For a given element $x \in {\mathscr{X}}$ and for a given solution $s$ of the B-spline problem for $x$ the element $s+y_0$ is also a solution of the B-spline problem for $x$, and both these solutions are different. 
\end{proof}

\begin{lemma}\label{lc}
	Let $\mathscr{X}$ be a Hilbert $\mathscr{A}$-module and $\mathscr{Y}$ be a closed submodule of $\mathscr{X}$.
	Let $B:\mathscr{X}\times \mathscr{X}\to \mathscr{A}$ be a bounded $\mathscr{A}$-sesquilinear form and 
        let $B$ be positive on $\mathscr{Y}$, i.e. $B(y,y)\geq 0$ for any $y\in \mathscr{Y}$. Then
	\[
	\widecheck{\mathscr{Y}}=\mathscr{Y}_1=\widetilde{\mathscr{Y}}.
	\]
\end{lemma}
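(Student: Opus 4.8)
The plan is to reduce the two equalities to the Cauchy--Schwarz inequality for the positive semidefinite form $B$ restricted to $\mathscr{Y}$. Since the hypothesis only gives $B(y,y)\geq 0$ for $y\in\mathscr{Y}$, the first task is to upgrade this pointwise positivity to genuine Hermitian symmetry on $\mathscr{Y}$; this is precisely what turns $B|_{\mathscr{Y}\times\mathscr{Y}}$ into a semi-inner product and makes Cauchy--Schwarz available. The reverse inclusions $\widecheck{\mathscr{Y}}\subseteq\mathscr{Y}_1$ and $\widetilde{\mathscr{Y}}\subseteq\mathscr{Y}_1$ are already recorded, so only $\mathscr{Y}_1\subseteq\widecheck{\mathscr{Y}}$ and $\mathscr{Y}_1\subseteq\widetilde{\mathscr{Y}}$ remain.

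First I would show $B(x,y)^*=B(y,x)$ for all $x,y\in\mathscr{Y}$ by polarization. Because $\mathscr{Y}$ is a complex submodule, each vector $x+i^k y$ with $k\in\{0,1,2,3\}$ lies in $\mathscr{Y}$, so $B(x+i^k y,\,x+i^k y)\geq 0$ and in particular is self-adjoint. Using that $B$ is anti-$\mathscr{A}$-linear in the first slot and $\mathscr{A}$-linear in the second, one expands $B(x+i^k y,\,x+i^k y)=B(x,x)+B(y,y)+i^k B(x,y)+i^{-k}B(y,x)$, and the combination $\tfrac14\sum_{k=0}^3 i^{-k}B(x+i^k y,\,x+i^k y)$ recovers $B(x,y)$. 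Taking adjoints of the self-adjoint diagonal terms and reindexing $k\mapsto -k$ produces exactly $B(y,x)$, so comparing the two expressions yields $B(x,y)^*=B(y,x)$. An immediate consequence is $\widecheck{\mathscr{Y}}=\widetilde{\mathscr{Y}}$, since $B(\widecheck{y},y)=0$ if and only if $B(y,\widecheck{y})=B(\widecheck{y},y)^*=0$.

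Next I would invoke the Cauchy--Schwarz inequality for the now Hermitian positive form, namely
\[
B(x,y)^*B(x,y)\leq \|B(x,x)\|\,B(y,y)\qquad (x,y\in\mathscr{Y}),
\]
whose standard proof for $\mathscr{A}$-valued semi-inner products carries over verbatim from the case of the module inner product. Granting this, the missing inclusion follows at once: if $y_0\in\mathscr{Y}_1$, then $B(y_0,y_0)=0$, so for every $y\in\mathscr{Y}$ we obtain $B(y_0,y)^*B(y_0,y)\leq \|B(y_0,y_0)\|\,B(y,y)=0$, whence $B(y_0,y)=0$ because $a^*a=0$ forces $a=0$ in a $C^*$-algebra. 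Thus $y_0\in\widecheck{\mathscr{Y}}$, giving $\mathscr{Y}_1\subseteq\widecheck{\mathscr{Y}}$. Combined with $\widecheck{\mathscr{Y}}\subseteq\mathscr{Y}_1$ and $\widecheck{\mathscr{Y}}=\widetilde{\mathscr{Y}}$, this delivers $\widecheck{\mathscr{Y}}=\mathscr{Y}_1=\widetilde{\mathscr{Y}}$. (Alternatively, applying Cauchy--Schwarz with $y_0$ in the second slot gives $\mathscr{Y}_1\subseteq\widetilde{\mathscr{Y}}$ directly.)

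I expect the only genuinely delicate point to be the passage from pointwise positivity to self-adjointness together with the validity of Cauchy--Schwarz over a noncommutative $C^*$-algebra. The polarization identity must be handled with the correct sesquilinearity conventions (anti-linear in the first variable, so $B(xa,z)=a^*B(x,z)$ and $B(x,za)=B(x,z)a$), and the Cauchy--Schwarz step ultimately rests on positivity in $\mathscr{A}$ being detected by states and on the implication $a^*a=0\Rightarrow a=0$. Once self-adjointness of $B$ on $\mathscr{Y}$ is secured, the set equalities drop out mechanically.
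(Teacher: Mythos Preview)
Your argument is correct, but it takes a slightly different path from the paper. The paper composes $B$ with an arbitrary positive linear functional $f$ on $\mathscr{A}$, so that $fB(\cdot,\cdot)$ becomes a scalar positive sesquilinear form on $\mathscr{Y}$; the ordinary (complex) Cauchy--Schwarz inequality then gives $|fB(x,y)|^2\leq fB(x,x)\,fB(y,y)$, and since this vanishes whenever $x\in\mathscr{Y}_1$ for every positive $f$, one concludes $B(x,y)=0$. You instead stay at the $\mathscr{A}$-valued level throughout: first extracting Hermitian symmetry on $\mathscr{Y}$ via polarization, then invoking the module-level Cauchy--Schwarz inequality $B(x,y)^*B(x,y)\leq\|B(x,x)\|\,B(y,y)$. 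The paper's route is shorter because the scalar reduction makes both the symmetry and the Cauchy--Schwarz step automatic, while your approach has the minor advantage of yielding the $\mathscr{A}$-valued inequality and the identity $B(x,y)^*=B(y,x)$ on $\mathscr{Y}$ explicitly, which could be reused elsewhere.
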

\begin{proof}
	Let $f$ be any positive linear functional on $A$. Then $f(B(\cdot,\cdot))$ (simply denoted by $fB(\cdot,\cdot)$~) is a positive $\mathbb{C}$-sesquilinear form on $\mathscr{Y}$. It follows from the Cauchy--Schwartz inequality (see \cite[page 52]{R}) that
	\[
	|fB(x,y)|\leq (fB(x,x))^{\frac{1}{2}}(fB(y,y))^\frac{1}{2}\,,\qquad x,y\in \mathscr{Y},
	\]
and so $\mathscr{Y} _1\subset \widecheck{\mathscr{Y} }$ and $\mathscr{Y} _1\subset\widetilde{\mathscr{Y}}$. 
\end{proof}
\begin{proposition}\label{lo2}
      Let $\mathscr X$ be a Hilbert $\mathscr{A}$-module and $T: \mathscr{X}\to \mathscr{X}$ be adjointable. Let $B_1:\mathscr{X}\times \mathscr{X}\to \mathscr{A}$ 
     be the bounded $\mathscr{A}$-sesquilinear form defined by $B_1(x,y)=\langle T(x),y \rangle$ for $x,y \in \mathscr{X}$. Let $B_2:\mathscr{X}\times \mathscr{X}\to \mathscr{A}$ be the bounded $\mathscr{A}$-sesquilinear form defined by $B_2(x,y)=\langle T^*(x),y \rangle$ for $x,y \in \mathscr{X}$.
      Then
      $\widetilde{\mathscr{X}_{B_1}}=\widecheck{\mathscr{X}_{B_2}}$ and $\widetilde{\mathscr{X}_{B_2}}=\widecheck{\mathscr{X}_{B_1}}$.
\end{proposition}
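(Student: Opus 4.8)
The plan is to identify each of the four radicals with the kernel of either $T$ or $T^*$, and then match them up using the defining relation $\langle T(y),z\rangle=\langle y,T^*(z)\rangle$ of the adjoint. Since all four radicals are taken over the full module $\mathscr{Y}=\mathscr{X}$, no orthogonal-complementation issue arises, so the whole argument reduces to the definiteness of the inner product (axiom (iv)) together with adjointability.

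First I would unwind the right radical $\widecheck{\mathscr{X}_{B_1}}$. By definition an element $z$ lies in it exactly when $B_1(z,y)=\langle T(z),y\rangle=0$ for every $y\in\mathscr{X}$. Because the $\mathscr{A}$-valued inner product is definite, this forces $T(z)=0$, so $\widecheck{\mathscr{X}_{B_1}}=\ker T$. Running the identical computation with $T^*$ in place of $T$ gives $\widecheck{\mathscr{X}_{B_2}}=\ker T^*$. Next I would treat the left radical $\widetilde{\mathscr{X}_{B_1}}$: here $z$ belongs to it precisely when $B_1(y,z)=\langle T(y),z\rangle=0$ for all $y$. This is the one place where adjointability is genuinely used, since $\langle T(y),z\rangle=\langle y,T^*(z)\rangle$, and the latter vanishes for all $y$ if and only if $T^*(z)=0$; hence $\widetilde{\mathscr{X}_{B_1}}=\ker T^*$. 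Symmetrically, using $(T^*)^*=T$, one obtains $\widetilde{\mathscr{X}_{B_2}}=\ker T$.

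Assembling these four identifications yields $\widetilde{\mathscr{X}_{B_1}}=\ker T^*=\widecheck{\mathscr{X}_{B_2}}$ and $\widetilde{\mathscr{X}_{B_2}}=\ker T=\widecheck{\mathscr{X}_{B_1}}$, which is exactly the claim. There is no serious obstacle in this proof; the entire content is the pair of identities $\widecheck{\mathscr{X}_{B_1}}=\ker T$ and $\widetilde{\mathscr{X}_{B_1}}=\ker T^*$ (and their $B_2$-analogues). The only point requiring mild care is bookkeeping: one must keep straight which argument of $B_i$ is held fixed in each radical, so that $T$ lands on the correct side and the adjoint is applied in the right direction. Once that is done, the equalities follow immediately from definiteness of $\langle\cdot,\cdot\rangle$ and the adjoint relation.
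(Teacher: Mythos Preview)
Your proof is correct and follows essentially the same route as the paper: the paper writes the chain $\widetilde{\mathscr{X}_{B_1}}=\{\tilde x:\langle T(x),\tilde x\rangle=0\ \forall x\}=\{\tilde x:\langle T^*(\tilde x),x\rangle=0\ \forall x\}=\widecheck{\mathscr{X}_{B_2}}$ directly, while you make the intermediate identification with $\ker T^*$ (and symmetrically $\ker T$) explicit before matching. The underlying steps---adjoint relation plus definiteness of the inner product---are identical.
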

\begin{proof}
\begin{eqnarray*}
     \widetilde{\mathscr{X}_{B_1}}  & = &  \{ \tilde{x} : B_1(x,\tilde{x})=0 \,\, {\rm for} \,\, {\rm any} \,\, x \in \mathscr{X} \} \\
                                                   & =  &  \{ \tilde{x} : \langle T(x), \tilde{x} \rangle=0 \,\, {\rm for} \,\, {\rm any} \,\, x \in \mathscr{X} \} \\
                                                   & =  &  \{ \tilde{x} : \langle \tilde{x}, T(x) \rangle=0 \,\, {\rm for} \,\, {\rm any} \,\, x \in \mathscr{X} \} \\
                                                   & = &  \{ \tilde{x} : \langle T^*(\tilde{x}), x \rangle=0 \,\, {\rm for} \,\, {\rm any} \,\, x \in \mathscr{X} \} \\
                                                   & =  &  \{ \tilde{x} : B_2(\tilde{x},x)=0 \,\, {\rm for} \,\, {\rm any} \,\, x \in \mathscr{X} \} \\
                                                   & =  &  \widecheck{\mathscr{X}_{B_2}}  \, . 
\end{eqnarray*}
Analogously, one can prove the other set identity.
\end{proof}

We say that a bounded $\mathscr{A}$-sesquilinear form $B$ is normal on an orthogonally complemented Hilbert $\mathscr{A}$-submodule $\mathscr{Y}$ of 
a Hilbert $\mathscr{A}$-module $\mathscr{X}$ if $B(y,z)=\langle T(y),z \rangle$ for $y,z \in \mathscr{Y}$ with $T\in \mathscr L(\mathscr{X})$, and $PTP$ 
and $PT^*P$ have the same kernel, where $P$ is the projection of $\mathscr{X}$ onto $\mathscr{Y}$.

\begin{proposition}\label{lo}
	Let $\mathscr{X}$ be a self-dual Hilbert $\mathscr{A}$-module. Let $\mathscr{Y}$ be an orthogonally complemented  
        Hilbert $\mathscr{A}$-submodule of $\mathscr{X}$. Let $B$ be a normal bounded $\mathscr{A}$-sesquilinear form 
        on $\mathscr{Y}$. Then $$\widecheck{\mathscr{Y}}=\widetilde{\mathscr{Y}}.$$
\end{proposition}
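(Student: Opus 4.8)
The plan is to use the operator representation supplied by the normality hypothesis and to reinterpret each of the two radicals as the kernel of a compression of $T$ to $\mathscr{Y}$ --- namely $PTP$ for the right radical and $PT^*P$ for the left radical --- intersected with $\mathscr{Y}$; the desired equality then follows immediately from the defining property of normality, that $PTP$ and $PT^*P$ have the same kernel.

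First I would fix the adjointable operator $T\in\mathscr L(\mathscr{X})$ with $B(y,z)=\langle T(y),z\rangle$ for $y,z\in\mathscr{Y}$, which is furnished by the definition of normality (and is consistent with Theorem \ref{ths}, since $\mathscr{X}$ is self-dual), and denote by $P$ the orthogonal projection of $\mathscr{X}$ onto the orthogonally complemented submodule $\mathscr{Y}$. For $\widecheck{y}\in\mathscr{Y}$ the condition $B(\widecheck{y},y)=0$ for all $y\in\mathscr{Y}$ reads $\langle T(\widecheck{y}),y\rangle=0$ for all $y\in\mathscr{Y}$, i.e. $T(\widecheck{y})\in\mathscr{Y}^{\perp}$. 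Because $\mathscr{Y}$ is orthogonally complemented, this is equivalent to $PT(\widecheck{y})=0$: one implication is clear, and for the other one substitutes $y=PT(\widecheck{y})\in\mathscr{Y}$ to obtain $\langle PT(\widecheck{y}),PT(\widecheck{y})\rangle=0$, whence $PT(\widecheck{y})=0$ by definiteness of the inner product. As $\widecheck{y}\in\mathscr{Y}$ forces $P\widecheck{y}=\widecheck{y}$, this says precisely $PTP(\widecheck{y})=0$, so that $\widecheck{\mathscr{Y}}=\ker(PTP)\cap\mathscr{Y}$.

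Next I would run the symmetric computation for $\widetilde{\mathscr{Y}}$, the only additional ingredient being the adjoint relation. For $\widetilde{y}\in\mathscr{Y}$ the condition $B(y,\widetilde{y})=0$ for all $y\in\mathscr{Y}$ becomes $\langle T(y),\widetilde{y}\rangle=\langle y,T^*(\widetilde{y})\rangle=0$ for all $y\in\mathscr{Y}$, i.e. $T^*(\widetilde{y})\in\mathscr{Y}^{\perp}$; the same projection argument yields $PT^*(\widetilde{y})=0$, hence $PT^*P(\widetilde{y})=0$. Therefore $\widetilde{\mathscr{Y}}=\ker(PT^*P)\cap\mathscr{Y}$. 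Finally, the normality of $B$ on $\mathscr{Y}$ means exactly that $\ker(PTP)=\ker(PT^*P)$, so intersecting with $\mathscr{Y}$ gives $\widecheck{\mathscr{Y}}=\widetilde{\mathscr{Y}}$.

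I do not anticipate a serious obstacle: the substance is the translation of the two radicals into kernels of the compressions $PTP$ and $PT^*P$. The one point that requires care is the equivalence ``$T(\widecheck{y})\in\mathscr{Y}^{\perp}\iff PT(\widecheck{y})=0$'', which relies simultaneously on the existence of the projection $P$ (i.e. on the orthogonal complementation of $\mathscr{Y}$) and on the definiteness axiom of the $C^*$-inner product; without orthogonal complementation the reduction to kernels would fail, which is precisely why that hypothesis appears in the statement.
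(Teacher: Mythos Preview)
Your proposal is correct and follows essentially the same approach as the paper: represent $B$ via $T\in\mathscr L(\mathscr X)$, compress to $T_{\mathscr Y}=PTP$, and identify the two radicals with kernels, after which the normality hypothesis $\ker(PTP)=\ker(PT^*P)$ finishes the argument. The only cosmetic difference is that the paper reaches $\widetilde{\mathscr Y}$ by passing through $\mathscr N(T_{\mathscr Y}^*)=\mathscr R(T_{\mathscr Y})^\perp$ (citing \cite[Lemma~15.3.5]{WO}) rather than your direct use of the adjoint relation $\langle T(y),\widetilde y\rangle=\langle y,T^*(\widetilde y)\rangle$; both routes are equivalent here.
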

\begin{proof}
Suppose $T\in \mathscr{L}(\mathscr{X})$ such that
\[
B(x,y)=\langle Tx,y\rangle\qquad (x,y\in \mathscr{X}) \, .
\]
Let $T_\mathscr{Y}=P_{\mathscr{Y}}TP_{\mathscr{Y}}   \in \mathscr{L}(\mathscr{Y})$, where $P_{\mathscr{Y}}$ is the projection onto $\mathscr{Y}$. 
Hence,
\begin{eqnarray*}
\widecheck{\mathscr{Y}}
&= & \{ \widecheck{y}\in \mathscr{Y}:B(\widecheck{y},y)=0 \,\, {\rm ~for~all~} y \in \mathscr{Y}\} \\
&= &   \{ \widecheck{y}\in \mathscr{Y}:\langle T_\mathscr{Y}\widecheck{y},y\rangle =0\,\, {\rm ~for~all~} y\in \mathscr{Y}\}\\
&= &   \mathscr{N}(T_\mathscr{Y})\\
&= &   \mathscr{N}(T_\mathscr{Y}^*) \\
&= &   \mathscr{R}(T_\mathscr{Y})^\perp\\
&= &   \{ \widetilde{y}\in \mathscr{Y}:\langle T_\mathscr{Y} y,\widetilde {y}\rangle =0 \,\, {\rm ~for~all~} y\in \mathscr{Y}\}\\
&= &   \{\widetilde{y}\in \mathscr{Y}: B(y,\widetilde{y})=0\,\, {\rm ~for~all~} y\in \mathscr{Y}\}\\
&= &  \widetilde{\mathscr{Y}}.
\end{eqnarray*} 
by \cite[Lemma 15.3.5]{WO} in step 4.
\end{proof}

As a particular case, $PTP$ can be a normal operator. In any case, the operator $T_\mathscr{Y}$ can be extended to a bounded $\mathscr{A}$-linear isomorphism $S$ on $\mathscr{Y}$ setting $S={\rm id}$ on the common kernel and $S=T_{\mathscr{Y}}$ on the orthogonal complement of the kernel with respect to $\mathscr{Y}$. So one sees, any modular isomorphism of that orthogonal complement would suffice and normality of $T_{\mathscr{Y}}$ is not necessary. \\

The following result is a generalization of \cite[Theorem 1.5]{ABR}.

\begin{theorem} \label{uniqueness}
Let $\mathscr{X}$ be a Hilbert $\mathscr{A}$-module. Let $\mathscr{Y}$ be a closed submodule of $\mathscr{X}$. 	Let $B:\mathscr{X}\times \mathscr{X}\to \mathscr{A}$ be a bounded $\mathscr{A}$-sesqulinear form on $\mathscr{X}$ and $T$ be as in Lemma \ref{thsman} and $\widecheck{\mathscr Y}=\{0\}$.  Then  $B$-spline interpolation problem has a solution for $\mathscr{Y}$ if and only if 
$$\{Tx|_\mathscr Y:x\in \mathscr X\}\subseteq \{Ty|_\mathscr Y:y\in \mathscr Y\}$$
\end{theorem}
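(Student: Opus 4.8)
The plan is to reduce the statement to a direct reformulation of the $B$-spline condition in terms of the operator $T$ from Lemma \ref{thsman} together with restriction of dual elements to $\mathscr{Y}$. Recall that $T\colon\mathscr{X}\to\mathscr{X}'$ satisfies $B(x,z)=T(x)(z)$ and is additive and $\mathscr{A}$-linear; for $w\in\mathscr{X}$ I write $Tw|_{\mathscr{Y}}\in\mathscr{Y}'$ for the restriction of the functional $T(w)$ to the submodule $\mathscr{Y}$, so that $Tw|_{\mathscr{Y}}(y)=B(w,y)$ for all $y\in\mathscr{Y}$.

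First I would unwind the definition of a solution for a fixed $x\in\mathscr{X}$. A $B$-spline in the coset $x+\mathscr{Y}$ has the form $s=x+y'$ with $y'\in\mathscr{Y}$, and the spline condition $B(s,y)=0$ for all $y\in\mathscr{Y}$ becomes, by additivity of $B$ in the first slot, $B(x,y)+B(y',y)=0$ for all $y\in\mathscr{Y}$; that is, $Tx|_{\mathscr{Y}}=-Ty'|_{\mathscr{Y}}$ as elements of $\mathscr{Y}'$. Since $T(-y')=-T(y')$ and $-y'\in\mathscr{Y}$, this is equivalent to the existence of some $y''\in\mathscr{Y}$ (namely $y''=-y'$) with $Tx|_{\mathscr{Y}}=Ty''|_{\mathscr{Y}}$. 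Hence a solution exists for this particular $x$ if and only if $Tx|_{\mathscr{Y}}\in\{Ty|_{\mathscr{Y}}:y\in\mathscr{Y}\}$.

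Next I would simply quantify over all $x\in\mathscr{X}$: the $B$-spline interpolation problem has a solution for $\mathscr{Y}$ precisely when the previous membership holds for every $x$, which is exactly the asserted inclusion $\{Tx|_{\mathscr{Y}}:x\in\mathscr{X}\}\subseteq\{Ty|_{\mathscr{Y}}:y\in\mathscr{Y}\}$. Both implications of the theorem are read off from this single chain of equivalences, so no separate argument for the two directions is required.

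I do not expect a genuine obstacle here; the content is bookkeeping. The points that require care are the module conventions on the dual $\mathscr{X}'$ from \eqref{msm44}, which must be invoked to confirm that $T$ is additive and that $-Ty'|_{\mathscr{Y}}=T(-y')|_{\mathscr{Y}}$, and the observation that the reverse inclusion $\{Ty|_{\mathscr{Y}}:y\in\mathscr{Y}\}\subseteq\{Tx|_{\mathscr{Y}}:x\in\mathscr{X}\}$ is automatic from $\mathscr{Y}\subseteq\mathscr{X}$, so that only the stated inclusion carries information. Finally, the hypothesis $\widecheck{\mathscr{Y}}=\{0\}$ plays no role in the existence equivalence itself; by the earlier uniqueness criterion (a solution is unique if and only if $\widecheck{\mathscr{Y}}=\{0\}$) it guarantees that whenever a solution exists it is the unique one, which is presumably why it is imposed, matching the uniqueness content of \cite[Theorem 1.5]{ABR}.
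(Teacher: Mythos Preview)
Your proposal is correct and follows essentially the same approach as the paper: both unwind the spline condition $B(x+y',y)=0$ for all $y\in\mathscr{Y}$ into the equality $Tx|_{\mathscr{Y}}=T(-y')|_{\mathscr{Y}}$ and read off the equivalence. Your observation that the hypothesis $\widecheck{\mathscr{Y}}=\{0\}$ is not needed for the existence equivalence is also accurate; the paper invokes it in the converse direction to assert uniqueness of $y_0$, but that uniqueness is automatic from $y_0=s-z$ and plays no role in establishing the inclusion.
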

\begin{proof}
Suppose that $\{Tx|_\mathscr Y:x\in \mathscr X\}\subseteq \{Ty|_\mathscr Y:y\in \mathscr Y\}$ and $z\in\mathscr{X}$. Then there is $y_0\in \mathscr{Y}$ such that $Tz|_\mathscr Y=Ty_0|_\mathscr Y$. Let $s=z-y_0$. Clearly,
\[
B(s,y)=B(z,y)-B(y_0,y)=T(z)(y)-T(y_0)(y)=0
\]
 for all $y\in \mathscr{Y}.$

To prove the converse, let $z\in \mathscr{X}$ be arbitrary. There is $s\in z+\mathscr{Y}$ such that $B(s,y)=0$ for all $y\in \mathscr{Y}$. Since $\widecheck{\mathscr Y}=\{0\}$, there exists a unique element $y_0\in \mathscr{Y}$ such that $s=z+y_0$. Then  
\begin{align*}
(Tz)(y)= B(z,y)=B(-y_0,y)=T(-y_0)(y),
\end{align*}
for all $z\in \mathscr{X}$ and $y\in \mathscr Y$.  Thus $Tz|_\mathscr Y=T(-y_0)|_\mathscr Y$.
\end{proof}

\begin{remark}
The exact version of Theorem 1.5 of \cite{ABR}, stated in the setting of Hilbert spaces, is not valid in the content of (self-dual) Hilbert $C^*$-modules, in general. In fact, if it held, we concluded that ``the $B$-spline interpolation problem has a solution for $\mathscr{Y}$ if and only if $T(\mathscr{X})\subseteq T(\mathscr{Y})$''.\\To see that the above assertion may not hold in the Hilbert $C^*$-module setting, suppose $\mathscr {X}$ be  a self-dual Hilbert $C^*$-module. Let $P$ and $Q$ be two distinct non-trivial projections on $\mathscr{X}$ such that $QP=PQ=P$. Set $T(x)=Q(x)$ for $x \in \mathscr{X}$ and $\mathscr{Y}=P(\mathscr{X})$. Then $T(\mathscr{X})$ is not contained in $\mathscr{Y}$, $T({\mathscr{Y}})=\mathscr{Y}$, $\widecheck{\mathscr Y}=\{0\}$, and for any $x \in \mathscr{X}$ there exists a unique element $s=x-P(x)$ with $P(x) \in \mathscr{Y}$ by definition such that $\langle T(s),y \rangle=0$ for any $y \in \mathscr{Y}$ because
\begin{eqnarray*}
\langle T(s),y \rangle & = & \langle Q(x)-QP(x),y \rangle = \langle Q(x),y>-<P(x),y \rangle \\
                                & = & \langle Q(x),P(y) \rangle - \langle P(x),P(y) \rangle \\
                                & = & \langle P^*Q(x),P(x) \rangle - \langle P(x),P(y) \rangle \\
                                & = & \langle P(x),P(y) \rangle - \langle P(x),P(y) \rangle = 0 
\end{eqnarray*}
for any $y \in \mathscr{Y}$.  But, $T(\mathscr{X})=Q(\mathscr{X})$ is `larger' than $\mathscr{Y} = T(\mathscr{Y}) = P(\mathscr{Y})$.
\end{remark}

The next result reads as follows.

\begin{theorem}\label{th11}
		Let $\mathscr{X}$ be a self-dual Hilbert $\mathscr{A}$-module. Let $\mathscr{Y}$ be an orthogonally complemented submodule of $\mathscr{X}$ and $P$ be the projection onto $\mathscr{Y}$. Let $B:\mathscr{X}\times \mathscr{X}\to \mathscr{A}$ be a bounded $\mathscr{A}$-sesqulinear form and positive on $\mathscr{Y}$. Then a necessary condition for the $B$-spline interpolation problem for $\mathscr{Y}$ to have a solution is 
\begin{equation}\label{e1}
B(x,\widecheck{y})=0\,\, {\rm ~for~all~} x\in \mathscr{X}, \widecheck{y}\in \widecheck {\mathscr{Y}}
\end{equation}
If $PT\mathscr{Y}$ is closed, then \eqref{e1} is also sufficient, where $T$ is as in Theorem \ref{ths}.
\end{theorem}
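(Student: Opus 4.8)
The plan is to convert the form into an adjointable operator and then translate the entire interpolation problem into a single range inclusion. Since $\mathscr{X}$ is self-dual, Theorem \ref{ths} supplies a unique $T\in\mathscr{L}(\mathscr{X})$ with $B(x,y)=\langle T(x),y\rangle$. Writing $P$ for the projection onto $\mathscr{Y}$, I would first record the elementary identity $\langle z,y\rangle=\langle Pz,y\rangle$ for $y\in\mathscr{Y}$ (the $(1-P)$ part lands in $\mathscr{Y}^{\perp}$). Hence $s$ is a $B$-spline precisely when $\langle PT(s),y\rangle=0$ for all $y\in\mathscr{Y}$, i.e.\ taking $y=PT(s)\in\mathscr{Y}$, precisely when $PT(s)=0$. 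Consequently a $B$-spline exists in the coset $x+\mathscr{Y}$ iff $PT(x+y_0)=0$ for some $y_0\in\mathscr{Y}$, which, because $PT(\mathscr{Y})$ is a submodule, is equivalent to $PT(x)\in PT(\mathscr{Y})$. Thus the interpolation problem is solvable for $\mathscr{Y}$ if and only if $PT(\mathscr{X})\subseteq PT(\mathscr{Y})$, and the whole proof reduces to comparing this inclusion with condition \eqref{e1}.

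For necessity I would argue directly. Fix $x\in\mathscr{X}$ and $\widecheck{y}\in\widecheck{\mathscr{Y}}$, and take a solution $s=x+y_0$ with $y_0\in\mathscr{Y}$. Expanding $B(x,\widecheck{y})=B(s,\widecheck{y})-B(y_0,\widecheck{y})$, the first term vanishes since $\widecheck{y}\in\mathscr{Y}$ and $s$ is a $B$-spline. The second term vanishes because positivity of $B$ on $\mathscr{Y}$ together with Lemma \ref{lc} gives $\widecheck{\mathscr{Y}}=\widetilde{\mathscr{Y}}$, so $\widecheck{y}\in\widetilde{\mathscr{Y}}$ also annihilates $B(y_0,\cdot)$. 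Therefore $B(x,\widecheck{y})=0$, which is exactly \eqref{e1}.

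For sufficiency I would bring in the closed-range hypothesis. Set $T_{\mathscr{Y}}=PTP|_{\mathscr{Y}}$, an adjointable operator on the (self-dual) submodule $\mathscr{Y}$ with adjoint $PT^{*}P|_{\mathscr{Y}}$ and range $\mathscr{R}(T_{\mathscr{Y}})=PT(\mathscr{Y})$. A computation exactly as in Proposition \ref{lo} identifies $\mathscr{N}(T_{\mathscr{Y}})=\widecheck{\mathscr{Y}}$ and $\mathscr{N}(T_{\mathscr{Y}}^{*})=\widetilde{\mathscr{Y}}$, and these coincide by Lemma \ref{lc}. Since $PT\mathscr{Y}$ is assumed closed, the closed-range theorem for adjointable operators (\cite[Lemma 15.3.5]{WO}) makes $\mathscr{R}(T_{\mathscr{Y}})$ orthogonally complemented in $\mathscr{Y}$ with $\mathscr{R}(T_{\mathscr{Y}})=\mathscr{N}(T_{\mathscr{Y}}^{*})^{\perp}=\widecheck{\mathscr{Y}}^{\perp}$, the complement taken inside $\mathscr{Y}$. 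Finally, for any $x\in\mathscr{X}$ condition \eqref{e1} yields $\langle PT(x),\widecheck{y}\rangle=\langle T(x),\widecheck{y}\rangle=B(x,\widecheck{y})=0$ for every $\widecheck{y}\in\widecheck{\mathscr{Y}}$, so $PT(x)\in\widecheck{\mathscr{Y}}^{\perp}=PT(\mathscr{Y})$. Hence $PT(\mathscr{X})\subseteq PT(\mathscr{Y})$ and the problem is solvable.

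The routine parts are the operator reformulation and the necessity computation; the crux, and the step I expect to be the main obstacle, is the identity $\mathscr{R}(T_{\mathscr{Y}})=\widecheck{\mathscr{Y}}^{\perp}$ in the sufficiency direction. Condition \eqref{e1} only gives the inclusion $PT(\mathscr{X})\subseteq\widecheck{\mathscr{Y}}^{\perp}$ for free; upgrading $\widecheck{\mathscr{Y}}^{\perp}$ to the genuine range $PT(\mathscr{Y})$ is where closedness of $PT\mathscr{Y}$ is indispensable, since in a Hilbert $C^{*}$-module the range of an adjointable operator agrees with the orthogonal complement of the kernel of its adjoint, and a submodule equals its double orthogonal complement, only when the relevant range is closed and hence orthogonally complemented. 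This is exactly why both the self-duality of $\mathscr{X}$ (which passes to $\mathscr{Y}$) and the hypothesis that $PT\mathscr{Y}$ is closed are required in order to invoke the closed-range theorem.
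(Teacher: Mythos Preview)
Your proof is correct and follows essentially the same route as the paper: represent $B$ by $T\in\mathscr{L}(\mathscr{X})$, use Lemma \ref{lc} to get $\widecheck{\mathscr{Y}}=\widetilde{\mathscr{Y}}$, and apply the closed-range theorem to $PTP|_{\mathscr{Y}}$ to identify $PT\mathscr{Y}=\widecheck{\mathscr{Y}}^{\perp}$ inside $\mathscr{Y}$. Your reformulation ``$s$ is a $B$-spline iff $PT(s)=0$, hence the problem is solvable iff $PT(\mathscr{X})\subseteq PT(\mathscr{Y})$'' is a slight streamlining: the paper instead decomposes $PTx=PTy_0+b_0$ with $b_0\in\widecheck{\mathscr{Y}}$ and then splits $y=y_1+y_2$ to verify $B(s,y)=0$ term by term, whereas your argument shows directly from \eqref{e1} that $PTx\in\widecheck{\mathscr{Y}}^{\perp}=PT\mathscr{Y}$ (in effect, your argument forces the paper's $b_0$ to be $0$).
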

\begin{proof}
	Let $x\in \mathscr{X}$ be arbitrary and let the $B$-spline interpolation problem have a solution. Then there is $y_0\in\mathscr{Y}$ such that $s=x+y_0$ and $B(s,y)=\langle Ts,y\rangle =0$ for all $y\in \mathscr{Y}$. Hence,
	\begin{align*}
	B(x,\widecheck{y})&=\langle Tx,\widecheck{y}\rangle =\langle T(s-y_0),\widecheck{y}\rangle=-\langle T(y_0),\widecheck{y}\rangle\\
	&=-B(y_0,\widecheck{y})=0 \qquad \qquad \qquad ({\rm ~by~ Lemma~}\ref{lc})
	\end{align*} 
for all $\widecheck{y}\in \widecheck{\mathscr{Y}}$. 

Conversely, let \eqref{e1} is valid and $PT\mathscr{Y} $ is closed. Define $S: \mathscr{X}\to \mathscr{Y}$ by $S= PTP \in \mathcal L(\mathscr{X}, \mathscr{Y})$. Note that $\mathcal R(S)=PT\mathscr{Y}$ is closed and so
\begin{align}\label{eo}
 \mathscr{Y}=\mathcal{N}(S^*)\oplus \mathcal {R}(S)
\end{align}
We claim that $\mathcal N(S^*)=\widecheck{\mathscr{Y}}$. Indeed
\begin{align*}
\mathcal N(S^*)&=\{y\in \mathscr{Y}:S^*y=0 \}\\
&=\{y\in \mathscr{Y}: \langle y',PT^*Py\rangle =0\,\, {\rm ~for~all~} y'\in \mathscr{Y} \}\\
&=\{y\in \mathscr{Y}: \langle Ty',y\rangle =0\,\, {\rm ~for~all~} y'\in \mathscr{Y} \}\\
&=\widetilde{\mathscr{Y}}=\widecheck{\mathscr{Y}}\quad\qquad \qquad ({\rm ~by~ Lemma~} \ref{lo}).
\end{align*} 
Let $x\in \mathscr{X}$ be arbitrary. It follows from \eqref{eo} that $PTx=PTy_0+b_0$ for some $y_0\in \mathscr{Y}$ and $b_0\in \widecheck{\mathscr{Y}}$. Set $s=x-y_0\in x+\mathscr{Y}$, we show that $s$ is a $B$-spline. To this end, given any $y\in \mathscr{Y}$ we have $y=y_1+y_2$ such that $y_1\in \widecheck{\mathscr{Y}}$ and $y_2\in \mathcal \mathcal R(S^*)$. Hence
\begin{align*}
B(s,y)&=\langle Ts,y\rangle =\langle b_0,y_2\rangle = 0 
\end{align*}
by  \eqref{e1} and because $b_0 \in \widecheck{\mathscr{Y}}$ and $y_2 \in R(S)$.
\end{proof} 
The following two propositions give some properties inherited from a module to its second dual. They are interesting on their own right and will be used in what follows.  

\begin{proposition}\label{selfman}
Let $\mathscr{X}$, $\mathscr Z$ be Hilbert $\mathscr{A}$-modules over a $C^*$-algebra $\mathscr{A}$. 
Let $B:\mathscr{X}\times \mathscr{Z}\to \mathscr{A}$ be a bounded $\mathscr{A}$-sesqulinear form. 
Then $B$ is uniquely extended to a bounded $\mathscr{A}$-sesqulinear form on $\mathscr{X}''\times\mathscr Z''$.
\end{proposition}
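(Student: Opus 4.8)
The plan is to linearize $B$ by means of Lemma \ref{thsman} and then to push the resulting operator through the canonical transpose construction recalled in the preliminaries. First I would apply Lemma \ref{thsman} to obtain the unique bounded $\mathscr{A}$-linear map $T\colon\mathscr{X}\to\mathscr{Z}'$ with $B(x,z)=T(x)(z)$ and $\|T\|\le\|B\|$. Taking transposes twice yields bounded $\mathscr{A}$-linear maps $T'\colon\mathscr{Z}''\to\mathscr{X}'$, $T'(G)(x)=G(T(x))$, and $T''\colon\mathscr{X}''\to(\mathscr{Z}'')'=\mathscr{Z}'''$, and I would define the candidate extension by
\[
\widetilde{B}(F,G):=F\big(T'(G)\big)=T''(F)(G)\qquad (F\in\mathscr{X}'',\ G\in\mathscr{Z}'').
\]
Under the correspondence of Lemma \ref{thsman} applied to the pair $(\mathscr{X}'',\mathscr{Z}'')$, this is just the form represented by the operator $T''$.

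Second, I would verify the three required properties. That $\widetilde{B}$ restricts to $B$ follows from a short computation: for $z\in\mathscr{Z}$ the functional $T'(\dot z)\in\mathscr{X}'$ sends $w$ to $\dot z(T(w))=T(w)(z)^*=B(w,z)^*$, whence $\widetilde{B}(\dot x,\dot z)=\dot x\big(T'(\dot z)\big)=T'(\dot z)(x)^*=B(x,z)$, using $\dot x(f)=f(x)^*$ and the identifications $x\leftrightarrow\dot x$, $z\leftrightarrow\dot z$ of $\mathscr{X},\mathscr{Z}$ inside their second duals. Boundedness is immediate from $\|\widetilde{B}(F,G)\|\le\|F\|\,\|T'\|\,\|G\|\le\|B\|\,\|F\|\,\|G\|$. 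Anti-$\mathscr{A}$-linearity in the first variable and $\mathscr{A}$-linearity in the second follow from the module operations \eqref{msm44} on $\mathscr{X}'$ and $\mathscr{Z}''$ together with the $\mathscr{A}$-linearity of $T'$; for instance $\widetilde{B}(Fb,G)=(Fb)\big(T'(G)\big)=b^{*}\widetilde{B}(F,G)$ and $\widetilde{B}(F,Gb)=F\big(T'(G)b\big)=\widetilde{B}(F,G)b$.

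Third, and this is where the real work lies, I would prove uniqueness. Any two bounded $\mathscr{A}$-sesquilinear extensions differ by a bounded $\mathscr{A}$-sesquilinear form $E$ on $\mathscr{X}''\times\mathscr{Z}''$ vanishing on $\mathscr{X}\times\mathscr{Z}$, and the aim is to force $E\equiv 0$. Fixing $\dot x$, the map $G\mapsto E(\dot x,G)$ is a bounded $\mathscr{A}$-linear functional on $\mathscr{Z}''$ vanishing on $\dot{\mathscr{Z}}$; and once $E(\dot x,\cdot)\equiv 0$ for all $x$, for fixed $G$ the conjugate of $F\mapsto E(F,G)$ is a bounded $\mathscr{A}$-linear functional on $\mathscr{X}''$ vanishing on $\dot{\mathscr{X}}$. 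Thus everything reduces to the single assertion that a bounded $\mathscr{A}$-linear functional on $\mathscr{X}''$ (respectively $\mathscr{Z}''$) vanishing on the canonical image of the module must vanish identically.

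The main obstacle is precisely that this is \emph{not} a norm-density statement: since $\mathscr{X}$ is itself complete, its image is norm-closed in $\mathscr{X}''$, so the norm continuity of $E$ gives no information. What one must exploit instead is the weak-$*$ density of the canonical image of $\mathscr{X}$ in $\mathscr{X}''=(\mathscr{X}')'$ for the duality with $\mathscr{X}'$, together with the separate weak-$*$ continuity of the canonical form $\widetilde{B}$; here the injectivity of the inclusion $F\mapsto\dot F$ of $\mathscr{X}''$ into $\mathscr{X}'$ recorded in the preliminaries is the decisive tool, since it shows that an element of $\mathscr{X}''$ is already determined by its values on $\widehat{\mathscr{X}}$. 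I would therefore cast uniqueness as uniqueness among the separately weak-$*$ continuous extensions and close the argument by invoking the structure theory of the second dual from \cite[Chapter 4]{MT}.
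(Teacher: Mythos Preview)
Your construction of the extension is essentially the paper's: linearize via Lemma \ref{thsman} to get $T:\mathscr X\to\mathscr Z'$, pass to $T'$ and $T''$, and set $\widetilde B(F,G)=T''(F)(G)=F(T'(G))$. Your verification that $\widetilde B(\dot x,\dot z)=B(x,z)$ is the same computation as theirs.

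The gap is in uniqueness. Your reduction is correct: the difference $E$ of two bounded extensions vanishes on $\dot{\mathscr X}\times\dot{\mathscr Z}$, and you must show that any element of $(\mathscr Z'')'=\mathscr Z'''$ vanishing on $\dot{\mathscr Z}$ is zero (and likewise for $\mathscr X$). But you then cite the wrong fact and take a wrong turn. The injectivity of $F\mapsto\dot F:\mathscr X''\to\mathscr X'$ only tells you that elements of $\mathscr X''$ are determined by their values on $\widehat{\mathscr X}\subset\mathscr X'$; what you need is one level higher, namely that elements of $\mathscr X'''$ are determined by their values on $\dot{\mathscr X}\subset\mathscr X''$. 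Your proposed fix---``cast uniqueness as uniqueness among the separately weak-$*$ continuous extensions''---does not prove the proposition as stated: the claim is uniqueness among \emph{all} bounded $\mathscr A$-sesquilinear extensions, and an arbitrary extension $M$ carries no a priori weak-$*$ continuity, so weak-$*$ density of $\dot{\mathscr X}$ is of no use against $E=\widetilde B-M$.

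The paper closes this gap purely algebraically, using precisely the structural result you allude to but do not deploy: $\mathscr Z'''\cong\mathscr Z'$ canonically (\cite[Corollary 4.1.5]{MT}). Concretely, they represent $E$ by a map $S:\mathscr X''\to\mathscr Z'$ (Lemma \ref{thsman} applied to $\mathscr X''\times\mathscr Z''$, together with $\mathscr Z'''=\mathscr Z'$), observe that $S|_{\mathscr X}=0$, and then show $S=(S|_{\mathscr X})''$ via the identities $S'=(S|_{\mathscr X})'$ and $S''=S$, which again rest on $\mathscr Z'''=\mathscr Z'$. This yields $S=0$ and hence full uniqueness without any continuity hypothesis on the competing extension. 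Your argument becomes correct if, after your reduction, you simply invoke $\mathscr Z'''=\mathscr Z'$ to conclude that the restriction map $(\mathscr Z'')'\to\mathscr Z'$, $\phi\mapsto\phi\circ(z\mapsto\dot z)$, is injective; drop the weak-$*$ detour.
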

\begin{proof}
Let $x\in \mathscr{X}$. 
Define $T_x:\mathscr{Z}\to \mathscr{A}$ by $T_x(z)=B(x,z)$ for $z\in \mathscr{Z}$. Then $T_x\in \mathscr{Z}'$ and so we can define 
$T:\mathscr{X}\to \mathscr{Z}'$ by $T(x)=T_x$. It is easy to see that $T$ is a bounded $\mathscr{A}$-linear map. 
Set $T':\mathscr{Z}''\to \mathscr{X}'$, $T'(F)(y)=F(Ty)$, $F\in \mathscr Z''$, $y\in \mathscr X$. 

Apply this once again to get $T'':\mathscr{X}''\to \mathscr{Z}'''$, and recall that $\mathscr Z'''$ is canonically isomorphic 
to $\mathscr Z'$ (\cite{MT}, Corollary 4.1.5). Note that $T''|_{\mathscr X}=T$. In fact
\[T''(\dot{x})(F)=\dot{x}(T'(F))=(T'(F)(x))^*=(F(Tx))^*=\dot{(Tx)}(F)\]
for all $F\in \mathscr{Z}''$. Thus $T''(\dot{x})=\dot{Tx}$, and so by using the identification via the canonical inclusion,i.e. the map $\mathscr X\hookrightarrow\mathscr X'', x\mapsto\dot x$, we get $T''(x)=Tx$.
	
Now, we define 
\[
\widetilde B:\mathscr{X}''\times \mathscr{Z}''\to \mathscr{A}\,,\qquad
	\widetilde B(F,G)= (T''F)(G),\qquad (F\in\mathscr X'',G\in \mathscr{Z}'').
\]

We have
\[\widetilde B(\dot x,\dot z)=T''\dot x(\dot z)=\dot x(T'\dot z)=((T'\dot z)(x))^*=(\dot z(Tx))^*=Tx(z)=B(x,z).\]

 To prove the uniqueness, suppose that $M:\mathscr{X}''\times \mathscr{Z}''\to \mathscr{A}$ is 
an extension of $B$. Then, by Lemma \ref{thsman}, there is $S:\mathscr{X}''\to \mathscr{Z}'$ such that 
$\widetilde B(F,G)-M(F,G)=S(F)(G)$ for all $F,\in \mathscr{X}''$, $G\in\mathscr Z''$. 
As 
$$
S(\dot x)(\dot z)=\widetilde B(\dot x, \dot y)-M(\dot x, \dot z)=0
$$ 
for any $x\in\mathscr X$, $z\in\mathscr Z$, 
we see that $S|_{\mathscr X}=0$. 
Let us show that $S=(S|_{\mathscr X})''$. Note that 
$$
S'(F)(x)=F(Sx)=F(S|_{\mathscr X}(x))=\left(S_{\mathscr{X}}\right)'(F)(x), 
$$
hence $S'=(S|_{\mathscr X})'$. Then 
$$
S''(F)(z)=F(S'(\dot z))=F((S|_\mathscr X)'(\dot z))=(S|_\mathscr X)'(F)(z)
$$ 
for any $F\in\mathscr X''$, $z\in\mathscr Z$, hence $S''=(S|_\mathscr X)''$. On the other hand, $S''=S$. 
Thus $S=(S|_\mathscr X)''=0$. This shows that $M=\widetilde B$.
\end{proof}

\begin{proposition}\label{orthman}
Let $\mathscr{X}$ be a Hilbert $\mathscr{A}$-module over a $C^*$-algebra $\mathscr{A}$ and $\mathscr{Y}$ be an orthogonally 
complemented submodule of $\mathscr{X}$. Then $\mathscr{Y}''$ is an orthogonally complemented submodule of $\mathscr{X}''$.
\end{proposition}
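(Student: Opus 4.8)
The plan is to transport the orthogonal projection onto $\mathscr{Y}$ to the second dual and to show that its double dual is again an orthogonal projection whose range is precisely $\mathscr{Y}''$. Since $\mathscr{Y}$ is orthogonally complemented, let $P\in\mathscr{L}(\mathscr{X})$ be the projection onto $\mathscr{Y}$, and factor it as $P=j\pi$, where $j:\mathscr{Y}\hookrightarrow\mathscr{X}$ is the inclusion and $\pi:\mathscr{X}\to\mathscr{Y}$ is $P$ corestricted to its range. A direct check gives $\pi j={\rm id}_{\mathscr{Y}}$, $j\pi=P$, and the adjoint relations $j^*=\pi$, $\pi^*=j$ (both following from $\langle y,x\rangle=\langle y,Px\rangle$ for $y\in\mathscr{Y}$, $x\in\mathscr{X}$). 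Passing to second duals, I would first record the covariant functoriality $(ST)''=S''T''$, which follows from $(ST)'=T'S'$ applied twice; in particular $\pi''j''={\rm id}_{\mathscr{Y}''}$ and $j''\pi''=P''$.

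The technical heart is to prove that double dualization is compatible with adjoints, namely $(T'')^*=(T^*)''$ for any adjointable $T:\mathscr{X}\to\mathscr{Z}$, where the adjoint on the right is taken with respect to the extended inner product $\langle F,G\rangle=F(\dot G)$ on the second dual. To see this I would unwind both sides: for $F\in\mathscr{X}''$ and $G\in\mathscr{Z}''$ one computes $\langle T''F,G\rangle=F\big(T'(\dot G)\big)$ and $\langle F,(T^*)''G\rangle=F(\dot H)$ with $H=(T^*)''G$, and then verify the key pointwise identity $(T^*)'(\widehat{x})=\widehat{Tx}$, which is just $\langle x,T^*z\rangle=\langle Tx,z\rangle$. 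This shows $T'(\dot G)=\dot H$ as elements of $\mathscr{X}'$, whence $\langle T''F,G\rangle=\langle F,(T^*)''G\rangle$. Applying this with $T=P=P^*$ gives $(P'')^*=P''$, so together with $(P'')^2=(P^2)''=P''$ the map $P''$ is a self-adjoint idempotent in $\mathscr{L}(\mathscr{X}'')$, that is, an orthogonal projection. Applying the same identity to $T=j$ gives $(j'')^*=(j^*)''=\pi''$, and since $\pi''j''={\rm id}$ this yields $\langle j''F,j''G\rangle=\langle F,\pi''j''G\rangle=\langle F,G\rangle$, so $j''$ is an isometric embedding of $\mathscr{Y}''$ into $\mathscr{X}''$; this is the precise sense in which $\mathscr{Y}''$ sits inside $\mathscr{X}''$.

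It then remains to identify the range of $P''$. Because $\pi''$ is surjective (it has the right inverse $j''$), one gets $\ran(P'')=\ran(j''\pi'')=j''(\ran\pi'')=j''(\mathscr{Y}'')$, i.e. the copy of $\mathscr{Y}''$ inside $\mathscr{X}''$. Finally, for any self-adjoint idempotent $E$ on a Hilbert $\mathscr{A}$-module one has the orthogonal decomposition into range and kernel with $\ker E=(\ran E)^{\perp}$; applying this to $E=P''$ gives $\mathscr{X}''=\mathscr{Y}''\oplus(\mathscr{Y}'')^{\perp}$, as desired. I expect the main obstacle to be the adjoint-compatibility identity $(T'')^*=(T^*)''$: it forces one to keep careful track of the three maps $x\mapsto\widehat{x}$, $G\mapsto\dot G$, and the dualizations $T\mapsto T'\mapsto T''$, and to check that the extended inner product on $\mathscr{X}''$ interacts with them correctly; everything else is then bookkeeping with the functoriality of double duals. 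Note that this argument uses only adjointability of $P$ (guaranteed by orthogonal complementedness) and never self-duality, so it is valid over an arbitrary $C^*$-algebra $\mathscr{A}$.
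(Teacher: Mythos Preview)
Your argument is correct and takes a genuinely different route from the paper. The paper works directly with the decomposition $\mathscr{X}=\mathscr{Y}\oplus\mathscr{Z}$: it first observes that restriction gives algebraic splittings $\mathscr{X}'=\mathscr{Y}'\oplus\mathscr{Z}'$ and $\mathscr{X}''=\mathscr{Y}''\oplus\mathscr{Z}''$, and then checks orthogonality by a single explicit computation $\langle G_{\mathscr{X}},H_{\mathscr{X}}\rangle=G(\dot H_{\mathscr{X}}|_{\mathscr{Y}})=0$, using that $\dot H_{\mathscr{X}}(x)=H(\widehat{q(x)})$ vanishes on $\mathscr{Y}$. You instead transport the projection $P$ itself and establish the operator identity $(T'')^*=(T^*)''$, from which $(P'')^2=P''=(P'')^*$ and $\ran P''=j''(\mathscr{Y}'')$ fall out formally. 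Your embedding $j''$ coincides with the paper's map $G\mapsto G_{\mathscr{X}}$ (since $j'f=f|_{\mathscr{Y}}$), so the two proofs identify $\mathscr{Y}''\subset\mathscr{X}''$ in the same way. The paper's approach is shorter and more hands-on; yours isolates a reusable functorial lemma about adjoints under double dualization, at the cost of a bit more bookkeeping with the maps $x\mapsto\widehat x$ and $G\mapsto\dot G$.
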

\begin{proof}
Let $\mathscr X=\mathscr Y\oplus\mathscr Z$, and let $p:\mathscr X\to \mathscr Y$,
$q:\mathscr X\to\mathscr Z$ be the corresponding projections. If $f\in\mathscr X'$, $g\in\mathscr Y'$, $h\in\mathscr Z'$, then
the map $(g,h)\mapsto f$, where $f(x)=g(p(x))+h(q(x))$, has an inverse map $f\mapsto (f|_{\mathscr Y},f|_{\mathscr Z})$, hence 
$\mathscr X'=\mathscr Y'\oplus\mathscr Z'$. Similarly, $\mathscr X''=\mathscr Y''\oplus\mathscr Z''$ algebraically, and it remains only
to check that $\mathscr Y''$ and $\mathscr Z''$ are orthogonal to each other. Let $G\in\mathscr Y''$, $H\in\mathscr Z''$, and let
$G_\mathscr X,H_\mathscr X\in\mathscr X''$ be given by $G_\mathscr X(f)=G(f|_{\mathscr Y})$, $H_\mathscr X(f)=H(f|_{\mathscr Z})$. If $F\mapsto \dot F$ denotes the canonical inclusion $\mathscr X''\to\mathscr X'$, then $\dot H_\mathscr X$ satisfies 
\[\dot H_\mathscr X(x)= H_\mathscr X(\widehat x)=H_\mathscr X(\widehat x|_{\mathscr Z})=H(\widehat{q(x)}),\qquad (x\in\mathscr X)\]
since
\[\widehat{x}|_{\mathscr Z}(z)=\langle x,z\rangle=\langle q(x),z\rangle=\widehat{q(x)}(z).\]
Therefore, 
$\dot H_\mathscr X|_{\mathscr Y}=0$. Hence $\langle G_\mathscr X,H_\mathscr X\rangle=G_\mathscr X(\dot H_\mathscr X)=G(\dot H_\mathscr X|_{\mathscr Y})=0$. 
\end{proof}


We are ready to state our main result of this section.

	\begin{theorem}
		Let $\mathscr{X}$ be a  Hilbert module over a $C^*$-algebra $\mathscr{A}$ and let $\mathscr{Y}$ be an orthogonally complemented submodule of $\mathscr{X}$. Let $B:\mathscr{X}\times \mathscr{X}\to \mathscr{A}$ be a bounded $\mathscr{A}$-sesqulinear form, positive on $\mathscr{Y}$. Assume 
		there exists $c>0$ and $k>0$ such that
		for every $f\in\mathcal P\mathcal S(\mathscr{A})$ and every $x\in\mathscr{Y}\backslash \widecheck{\mathscr Y}$ there exists $y\in\mathscr{Y}$ with $\|y\|=1$ such that $f(|y|^2)\geq k$ and
		\begin{equation}
		|fB(x,y)|^2\geq c f(|x|^2)f(|y|^2).
		\end{equation}
		Then a necessary condition for the $B$-spline interpolation problem for $\mathscr{Y}$ to have a solution is 
		\begin{equation}\label{e2}
		B(x, \widecheck {y})=0 \qquad (x\in \mathscr{X},\widecheck {y}\in \widecheck {\mathscr{Y}}).
		\end{equation}
		If $\dot{\mathscr{Y}}$ is orthogonally complemented in $\mathscr{X}''$, then \eqref{e2} is also sufficient.
	\end{theorem}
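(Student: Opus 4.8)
The plan is to follow the two-part structure announced in the statement, proving necessity by a direct computation and sufficiency by transporting everything to the second dual $\mathscr{X}''$. For \textbf{necessity}, the argument is essentially that of Theorem \ref{th11} and does not use the coercivity hypothesis at all. Assuming the $B$-spline problem is solvable, fix $x\in\mathscr{X}$ and $\widecheck{y}\in\widecheck{\mathscr{Y}}$, and let $s=x+y_{0}$ with $y_{0}\in\mathscr{Y}$ be a $B$-spline for $x$. Then $B(x,\widecheck{y})=B(s,\widecheck{y})-B(y_{0},\widecheck{y})$, where $B(s,\widecheck{y})=0$ because $\widecheck{y}\in\mathscr{Y}$ and $s$ is a spline, while $B(y_{0},\widecheck{y})=0$ because positivity of $B$ on $\mathscr{Y}$ gives $\widecheck{\mathscr{Y}}=\widetilde{\mathscr{Y}}$ by Lemma \ref{lc}, so that $\widecheck{y}\in\widetilde{\mathscr{Y}}$ annihilates every element of $\mathscr{Y}$ in the second slot. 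This yields \eqref{e2}.

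For \textbf{sufficiency}, the first step is to extend $B$ to the bounded $\mathscr{A}$-sesquilinear form $\widetilde{B}$ on $\mathscr{X}''\times\mathscr{X}''$ furnished by Proposition \ref{selfman}, and to let $T\colon\mathscr{X}\to\mathscr{X}'$ be the operator of Lemma \ref{thsman}, whose extension $\widetilde{T}$ we regard as a map $\mathscr{X}''\to\mathscr{X}'$ via the canonical identification $\mathscr{X}'''\cong\mathscr{X}'$ of \cite[Corollary 4.1.5]{MT}. Before anything else I would promote \eqref{e2} to the second dual. Since $\dot{\mathscr{Y}}$ is the isometric image of $\mathscr{Y}$, one checks $\widecheck{\dot{\mathscr{Y}}}=\dot{\widecheck{\mathscr{Y}}}$; and for fixed $\widecheck{y}\in\widecheck{\mathscr{Y}}$ the assignment $F\mapsto\big(\widetilde{T}(F)(\widecheck{y})\big)^{*}$ is a bounded $\mathscr{A}$-linear functional on $\mathscr{X}''$, i.e.\ an element of $\mathscr{X}'''\cong\mathscr{X}'$, which vanishes on $\dot{\mathscr{X}}$ by \eqref{e2}; the identification then forces it to vanish identically, so that $\widetilde{B}(F,\dot{\widecheck{y}})=0$ for all $F\in\mathscr{X}''$ and all $\widecheck{y}\in\widecheck{\mathscr{Y}}$.

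The second step uses the hypothesis that $\dot{\mathscr{Y}}$ is orthogonally complemented in $\mathscr{X}''$: let $P$ be the projection onto $\dot{\mathscr{Y}}$, form the compression $S$ of the operator representing $\widetilde{B}$ to $\dot{\mathscr{Y}}$ (defined through $P$ and the $\mathscr{X}''$-inner product), and repeat the mechanism of Theorem \ref{th11}. The role previously played by the closedness of $PT\mathscr{Y}$ is now played by the coercivity assumption: transporting the inequality $|fB(x,y)|^{2}\ge cf(|x|^{2})f(|y|^{2})$ with $f(|y|^{2})\ge k$ into the GNS spaces $\mathscr{H}_{f}$ attached to pure states $f\in\mathcal P\mathcal S(\mathscr{A})$ (the Paschke-type construction \cite{Pa}), a Cauchy--Schwarz division shows that the fibrewise operators induced by $\widetilde{B}$ are bounded below, with constant $\sqrt{c}$ uniform in $f$, off their kernels. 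I expect this to yield that $S$ has closed range and that $\mathscr{N}(S^{*})=\widecheck{\dot{\mathscr{Y}}}=\dot{\widecheck{\mathscr{Y}}}$, whence the orthogonal decomposition $\dot{\mathscr{Y}}=\dot{\widecheck{\mathscr{Y}}}\oplus\mathcal{R}(S)$ by a closed-range lemma of the type \cite[Lemma 15.3.5]{WO}. Given $x\in\mathscr{X}$, write the $\dot{\mathscr{Y}}$-component of the relevant image as $S\dot{y_{0}}+b_{0}$ with $\dot{y_{0}}\in\dot{\mathscr{Y}}$ and $b_{0}\in\dot{\widecheck{\mathscr{Y}}}$, set $s=\dot{x}-\dot{y_{0}}$, and verify $\widetilde{B}(s,G)=0$ for all $G\in\dot{\mathscr{Y}}$ exactly as in Theorem \ref{th11}, now invoking the second-dual form of \eqref{e2} together with the orthogonality of $\dot{\widecheck{\mathscr{Y}}}$ and $\mathcal{R}(S)$. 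Crucially $\dot{y_{0}}\in\dot{\mathscr{Y}}$ means $y_{0}\in\mathscr{Y}$, so $s=\dot{(x-y_{0})}\in\dot{\mathscr{X}}$ and $B(x-y_{0},y)=\widetilde{B}(s,\dot{y})=0$ for all $y\in\mathscr{Y}$; thus $x-y_{0}\in x+\mathscr{Y}$ is the desired $B$-spline in $\mathscr{X}$.

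The main obstacle is the coercivity-to-closed-range passage inside the non-self-dual module $\mathscr{X}''$: converting the uniform fibrewise lower bounds over $\mathcal P\mathcal S(\mathscr{A})$ into a genuine module-level closed-range and orthogonal-decomposition statement for $\dot{\mathscr{Y}}$, and securing the (effective) adjointability needed for $\mathscr{N}(S^{*})$ to make sense. This is precisely where the assumed orthogonal complementability of $\dot{\mathscr{Y}}$ itself is indispensable rather than the weaker complementability of $\mathscr{Y}''$ that comes for free from Proposition \ref{orthman}: it is what keeps the interpolating correction $\dot{y_{0}}$ inside $\dot{\mathscr{Y}}$, hence $y_{0}$ inside $\mathscr{Y}$, rather than in a larger completion, and therefore guarantees that the solution descends from $\mathscr{X}''$ back to $\mathscr{X}$.
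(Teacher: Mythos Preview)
Your proposal is correct and follows the same overall strategy as the paper: necessity via the direct computation with Lemma~\ref{lc}, sufficiency by lifting to $\mathscr{X}''$ through Proposition~\ref{selfman}, verifying the hypotheses of Theorem~\ref{th11} for the submodule $\dot{\mathscr{Y}}\subset\mathscr{X}''$, and then descending. Your explicit treatment of the descent step (that $\dot{y_0}\in\dot{\mathscr{Y}}$ forces $y_0\in\mathscr{Y}$, hence the spline lives in $\mathscr{X}$) is a point the paper leaves implicit, and your observation about why complementability of $\dot{\mathscr{Y}}$ rather than $\mathscr{Y}''$ is the right hypothesis is exactly right.

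Two differences in execution are worth noting. For lifting \eqref{e2}, you use the identification $\mathscr{X}'''\cong\mathscr{X}'$ to kill a functional that vanishes on $\dot{\mathscr{X}}$; the paper instead restricts $B$ to $\mathscr{X}\times\widecheck{\mathscr{Y}}$, notes this restriction is zero, and invokes the uniqueness clause of Proposition~\ref{selfman} to conclude the extension vanishes on $\mathscr{X}''\times\widecheck{\mathscr{Y}}''$. Both work; yours is arguably cleaner since only $G\in\dot{\widecheck{\mathscr{Y}}}=\widecheck{\dot{\mathscr{Y}}}$ is needed for Theorem~\ref{th11}.

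For the closed-range step, however, you reach for the GNS/Paschke fibrewise machinery, which is heavier than necessary and is what the paper reserves for the genuine $W^*$-setting of Theorem~\ref{Ta}. Here the paper argues directly at the module level: for $x\in\mathscr{Y}\setminus\widecheck{\mathscr{Y}}$, pick $f_0\in\mathcal{PS}(\mathscr{A})$ with $f_0(|x|^2)=\|x\|^2$ and the corresponding $y'$ from coercivity, and read off
\[
\|PT''\dot{x}\|\geq |f_0 B(x,y')|\geq\bigl(cf_0(|x|^2)f_0(|y'|^2)\bigr)^{1/2}\geq c^{1/2}k^{1/2}\|x\|,
\]
while $PT''\dot{x}=0$ for $x\in\widecheck{\mathscr{Y}}$; closedness of $PT''\dot{\mathscr{Y}}$ then follows by an elementary Cauchy-sequence argument. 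No GNS spaces enter, and the reference to \cite{Pa} is not really apt for a general $C^*$-algebra. Your route would reach the same inequality but with unnecessary overhead, so the ``main obstacle'' you flag dissolves once you take the direct norm estimate instead.
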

\begin{proof}
Let there be a solution for the $B$-spline interpolation problem. Let $x\in \mathscr{X}$ be arbitrary. Hence, there is an element $s=x+y_0$ with $y_0\in \mathscr{Y}$ such that $B(s, y)=0$ for any $y\in \mathscr{Y}$. Then for any $ \widecheck y\in \widecheck {\mathscr
	Y}$ we have 
\begin{align*}
B(x,\widecheck y )=B(s,\widecheck y)-B(y_0,\widecheck y)=0\qquad({\rm by ~Lemma ~}\ref{lc})
\end{align*}

Conversely, let \eqref{e2} be valid and let $\dot{\mathscr{Y}}$ be orthogonally complemented in $\mathscr{X}''$. Note that if $\mathscr Y=\widecheck{\mathscr Y}$, then \eqref{e2} implies that $B$-spline interpolation problem has a solution. Now suppose the $\mathscr Y\neq \widecheck{\mathscr Y}$. Let $\widetilde B$ and $\widetilde T$ be as in Proposition \ref{selfman}. By assumption, $\mathscr X''$ is self-dual. We shall show that $\widetilde B$ fulfulls the conditions of Theorem \ref{th11}.

1. For any $y\in \mathscr{Y}$ we have 
\[
\widetilde B(\dot y,\dot y)=B(y,y)\geq 0.
\]
2. Denote by $B_0$ the restriction of $B$ onto $\mathscr X\times\widecheck{\mathscr Y}$, and let $(B_0)\widetilde{\phantom{a}}$ denote the extension of $B_0$ to $\mathscr X''\times\widecheck{\mathscr Y}''$ as in Proposition \ref{selfman}. Then, by uniqueness, $\widetilde B|_{\mathscr X\times\widecheck{\mathscr Y}}=(B_0)\widetilde{\phantom{a}}$. If $B_0=0$, then $(B_0)\widetilde{\phantom{a}}=0$, hence $\widetilde B(F,G)=0$ for any $F\in\mathscr X''$, $G\in\widecheck{\mathscr Y}''$.  

3. Let $P$ be the projection of $\mathscr{X}''$ onto $\dot{\mathscr{Y}}$. Now we show that $P T''\dot{\mathscr{Y}}$ is closed in $\mathscr{X}''$. Let $x\in \mathscr{Y}\backslash\widecheck {\mathscr Y}$. Then there is $f_0\in \mathcal P\mathcal S(\mathscr{A})$ such that $f_0(|x|^2)=\|x\|^2$. By assumption, there exists an element $y'\in \mathscr{Y}$ such that $\|y'\|=1$ and $f_0(|y'|^2)\geq k$ and 
	\[
	|f_0B(x,y')|^2\geq cf_0(|x|^2)f_0(|y'|^2).
	\]
	We have 
	\begin{align}\label{msm3}
	\|P T'' \widehat x\|&=\sup_{f\in \mathcal P\mathcal S(\mathscr{A}),\|y\|=1}|f(\langle P  T'' \dot x,\dot y\rangle)| \nonumber\\
	&=\sup_{f\in \mathcal P\mathcal S(\mathscr{A}),\|y\|=1}|f\widetilde B(\dot x,\dot y)|\nonumber\\
	&=\sup_{f\in \mathcal P\mathcal S(\mathscr{A}),\|y\|=1}|fB(x,y)|\nonumber\\
	&\geq |f_0B(x,y')|\nonumber\\
	&\geq \left(cf_0(|x|^2)f_0(|y'|^2)\right)^\frac{1}{2}\nonumber\\
	&\geq c^\frac{1}{2}\,k^\frac{1}{2}\|x\|
	\end{align}
In addition, if $x\in \widecheck{\mathscr Y}$, then we have 
	\begin{align*}
	\|P T''\dot x\|&=\sup_{f\in \mathcal P\mathcal S(\mathscr{A}),\|y\|=1}|f(\langle P ''T \dot x,\dot y\rangle)| \nonumber\\
	&=\sup_{f\in \mathcal P\mathcal S(\mathscr{A}),\|y\|=1}|f\widetilde B(\dot x,\dot y)|\nonumber\\
	&=\sup_{f\in \mathcal P\mathcal S(\mathscr{A}),\|y\|=1}|fB(x,y)|\nonumber=0 \quad ({\rm by~the~ definition~of~} \widecheck {\mathscr{Y}})
	\end{align*}
	Suppose $P T''\dot y_n\to F$ in $\mathscr{X}''$. Two cases occur:\\
(i) There is an infinite number of $y_{n}\in \widecheck{\mathscr Y}$. Then there is a subsequence $\{y_{n_k}\}$ of $\{y_n\}$ such that $y_{n_k}\in \widecheck{\mathscr Y}$. Hence $\|P T''\dot y_{n_k}\|=0$ and so $F=0$. Thus $F\in P T'' \dot{\mathscr Y}$. \\
(ii) There is a finite number of $y_{n}\in \widecheck{\mathscr Y}$. Then, by removing them, we can assume that there is no $y_{n_k}\in \widecheck{\mathscr Y}$. Then, from \eqref{msm3}, we infer that the sequence $\{y_n\}$ converges to $y'\in \mathscr{Y}$, since $\dot{\mathscr{Y}}$ is closed in $\mathscr{X}''$. Hence, $P T''\dot y_n\to P T''y'$. Therefore, $F=P T''\dot y'\in \dot{\mathscr{Y}}$. Thus, $P T'' \dot{\mathscr{Y}} $ is closed. 
\end{proof}

\section{$B$-splines in Hilbert $W^*$-modules}

We need some auxiliary results. The first two lemmas can be deduced from Propositions \ref{selfman} and \ref{orthman} by noting that if $\mathscr{X}$ is a Hilbert $C^*$-module over a $W^*$-algebra, then $\mathscr{X}'$ is self-dual, and so $\mathscr{X}''=\mathscr{X}'$. 

\begin{lemma}\label{self}
Let $\mathscr{X}$ be a Hilbert $\mathscr{A}$-module over a $W^*$-algebra $\mathscr{A}$. 
Let $B:\mathscr{X}\times \mathscr{X}\to \mathscr{A}$ be a bounded $\mathscr{A}$-sesqulinear form on $\mathscr{X}$. 
Then $B$ is uniquely extended to a bounded $\mathscr{A}$-sesqulinear form on $\mathscr{X}'$.
\end{lemma}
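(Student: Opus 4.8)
The plan is to deduce the statement directly from Proposition \ref{selfman}, using only that the dual of a Hilbert module over a $W^*$-algebra is self-dual. First I would apply Proposition \ref{selfman} with $\mathscr{Z}=\mathscr{X}$, obtaining a unique bounded $\mathscr{A}$-sesquilinear form $\widetilde{B}$ on $\mathscr{X}''\times\mathscr{X}''$ that extends $B$, i.e. $\widetilde{B}(\dot x,\dot y)=B(x,y)$ for all $x,y\in\mathscr{X}$. Since a $W^*$-algebra is in particular a $C^*$-algebra, the hypotheses of Proposition \ref{selfman} are met, so this step is immediate.

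The core of the argument is the identification $\mathscr{X}''=\mathscr{X}'$. By Paschke's theorem (Theorem \ref{th1}), since $\mathscr{A}$ is a $W^*$-algebra the module $\mathscr{X}'$ is self-dual; and, as recorded in the preliminaries, every self-dual Hilbert $C^*$-module is reflexive. Reflexivity of $\mathscr{X}'$ means exactly that the canonical isometric inclusion $\mathscr{X}''\hookrightarrow\mathscr{X}'$, $F\mapsto\dot F$, is surjective, so $\mathscr{X}''=\mathscr{X}'$. I would verify that this abstract identification is the canonical one by checking that if $G\in(\mathscr{X}')'=\mathscr{X}''$ is represented by $\widehat{\tau}$ with $\tau\in\mathscr{X}'$ (as self-duality permits), then $\dot G=\tau$; this follows from $\dot G(x)=G(\widehat x)=\langle\tau,\widehat x\rangle=\tau(x)$, using \eqref{Pas2}.

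Combining the two steps, $\widetilde{B}$ is now a bounded $\mathscr{A}$-sesquilinear form on $\mathscr{X}'\times\mathscr{X}'$ extending $B$. Uniqueness transfers verbatim: any bounded $\mathscr{A}$-sesquilinear extension of $B$ to $\mathscr{X}'\times\mathscr{X}'=\mathscr{X}''\times\mathscr{X}''$ is in particular an extension of $B$ to the second dual, so it must equal $\widetilde{B}$ by the uniqueness clause of Proposition \ref{selfman}.

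The step requiring the most care is the compatibility check in the second paragraph: one must make sure the bijection furnished by self-duality of $\mathscr{X}'$ (sending $G$ to its Riesz-type representative in $\mathscr{X}'$) coincides with the embedding $F\mapsto\dot F$ built into Proposition \ref{selfman}. Only then is one entitled to read $\widetilde{B}$ as a form on $\mathscr{X}'\times\mathscr{X}'$ and to conclude that it restricts to $B$ on $\mathscr{X}\times\mathscr{X}$; everything else is bookkeeping.
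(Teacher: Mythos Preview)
Your proposal is correct and follows exactly the route the paper takes: the paper proves Lemma~\ref{self} simply by invoking Proposition~\ref{selfman} together with the identification $\mathscr{X}''=\mathscr{X}'$ coming from self-duality of $\mathscr{X}'$ over a $W^*$-algebra. One terminological quibble: the surjectivity of $F\mapsto\dot F$ from $\mathscr{X}''$ onto $\mathscr{X}'$ is not what ``reflexivity of $\mathscr{X}'$'' means (that would concern $\mathscr{X}'\to\mathscr{X}'''$); it is precisely the self-duality of $\mathscr{X}'$ that gives every $G\in\mathscr{X}''$ the form $\widehat{\tau}$, and your subsequent computation $\dot G=\tau$ is the correct justification.
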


\begin{lemma}\label{orth}
Let $\mathscr{X}$ be a Hilbert $\mathscr{A}$-module over a $W^*$-algebra $\mathscr{A}$ and $\mathscr{Y}$ be an orthogonally complemented submodule of $\mathscr{X}$. Then $\mathscr{Y}'$ is an orthogonally complemented submodule of $\mathscr{X}'$.
\end{lemma}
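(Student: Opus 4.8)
The plan is to reduce the statement to Proposition \ref{orthman}, which already provides an orthogonal complementation one level higher, at the second duals, and then to collapse the second duals onto the first duals using Paschke's self-duality. Concretely, I would first invoke Theorem \ref{th1}: since $\mathscr{A}$ is a $W^*$-algebra, both $\mathscr{X}'$ and $\mathscr{Y}'$ are self-dual Hilbert $\mathscr{A}$-modules. The goal of the first stage is therefore to upgrade self-duality of $\mathscr{X}'$ into a genuine identification $\mathscr{X}''=\mathscr{X}'$ (and likewise $\mathscr{Y}''=\mathscr{Y}'$) as Hilbert $\mathscr{A}$-modules, after which Proposition \ref{orthman} can be read directly at the level of first duals.

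To make this identification precise, I would use the isometric $\mathscr{A}$-linear inclusion $\mathscr{X}''\hookrightarrow\mathscr{X}'$, $F\mapsto\dot F$, with $\dot F(x)=F(\widehat x)$, recorded in the preliminaries, and show that it is surjective. The inner product on $\mathscr{X}'$ supplied by Theorem \ref{th1} lets me form, for each $\tau\in\mathscr{X}'$, the functional $\widehat\tau\in(\mathscr{X}')'=\mathscr{X}''$ given by $\widehat\tau(\rho)=\langle\tau,\rho\rangle$; relation \eqref{Pas2} then yields $\dot{\widehat\tau}(x)=\widehat\tau(\widehat x)=\langle\tau,\widehat x\rangle=\tau(x)$, so that $\dot{\widehat\tau}=\tau$. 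Hence $F\mapsto\dot F$ is an isometric $\mathscr{A}$-linear isomorphism of $\mathscr{X}''$ onto $\mathscr{X}'$ carrying the inner product of $\mathscr{X}''$ to the Paschke inner product of $\mathscr{X}'$, and the identical argument applied to $\mathscr{Y}$ gives $\mathscr{Y}''=\mathscr{Y}'$. (This is exactly the point where the $W^*$-hypothesis is indispensable: without Theorem \ref{th1} there need be no inner product on $\mathscr{X}'$ at all, and $\mathscr{X}''\neq\mathscr{X}'$ in general.)

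With these identifications in hand, I would apply Proposition \ref{orthman} to obtain $\mathscr{X}''=\mathscr{Y}''\oplus(\mathscr{Y}'')^{\perp}$ and transport this decomposition along the isomorphisms of the previous step to read it as $\mathscr{X}'=\mathscr{Y}'\oplus(\mathscr{Y}')^{\perp}$. The one point that genuinely requires care—and which I expect to be the main obstacle—is the compatibility of all the canonical maps: one must check that the isomorphism $\mathscr{X}''\cong\mathscr{X}'$ restricts on the submodule $\mathscr{Y}''$ to the isomorphism $\mathscr{Y}''\cong\mathscr{Y}'$, so that $\mathscr{Y}''\subseteq\mathscr{X}''$ is carried exactly onto $\mathscr{Y}'\subseteq\mathscr{X}'$, and that the identifications intertwine the inner products at both levels. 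Here the bookkeeping of Proposition \ref{orthman}, where $\mathscr{X}=\mathscr{Y}\oplus\mathscr{Z}$ induces $\mathscr{X}'=\mathscr{Y}'\oplus\mathscr{Z}'$ and $\mathscr{X}''=\mathscr{Y}''\oplus\mathscr{Z}''$ compatibly, is what guarantees the diagram commutes. Once that is verified, orthogonality of the two summands in $\mathscr{X}''$ transfers verbatim to orthogonality in $\mathscr{X}'$, completing the proof.
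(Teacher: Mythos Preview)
Your proposal is correct and follows exactly the approach the paper itself uses: the paper simply remarks that Lemma \ref{orth} is deduced from Proposition \ref{orthman} by noting that over a $W^*$-algebra $\mathscr{X}'$ is self-dual, so $\mathscr{X}''=\mathscr{X}'$. Your write-up supplies the details of this identification and the compatibility check more carefully than the paper does, but the strategy is the same.
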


\begin{lemma}\label{lweak}
Let $\mathscr{X}$ be a Hilbert $\mathscr{A}$-module over a $W^*$-algebra $\mathscr{A}$. Let $f\in \mathcal {PS} $ and $\tau\in \mathscr{X}'$. Suppose $x_n\in \mathscr{X}$ is such that $x_n+\mathscr N_f\to \tau_f$. Then $(S\widehat x_n)_f\to (S\tau)_f$ for any adjointable operator $S:\mathscr{X}'\to\mathscr{X}'.$
\end{lemma}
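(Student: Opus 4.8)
The plan is to show that the adjointable operator $S$ on $\mathscr{X}'$ descends to a single bounded operator $S_f$ on the Hilbert space $\mathscr{H}_f$, after which the statement is an immediate consequence of continuity. The first thing I would record is the elementary identification $(\widehat{x})_f = x+\mathscr{N}_f$ in $\mathscr{H}_f$ for every $x\in\mathscr{X}$: by \eqref{Pas1} and \eqref{Pas0},
\[
(y+\mathscr{N}_f,(\widehat{x})_f)_f = f(\widehat{x}(y)) = f(\langle x,y\rangle) = (y+\mathscr{N}_f,\,x+\mathscr{N}_f)_f \qquad (y\in\mathscr{X}),
\]
and since $\mathscr{X}/\mathscr{N}_f$ is dense in $\mathscr{H}_f$ this forces the claimed equality. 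Consequently the hypothesis $x_n+\mathscr{N}_f\to\tau_f$ reads precisely as $(\widehat{x}_n)_f\to\tau_f$, and the conclusion to be produced is $(S\widehat{x}_n)_f\to(S\tau)_f$.

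The heart of the argument is to define $S_f$ on the dense subspace $D=\{\rho_f:\rho\in\mathscr{X}'\}\subseteq\mathscr{H}_f$ (dense because it already contains every $x+\mathscr{N}_f$) by $S_f(\rho_f)=(S\rho)_f$ and to check boundedness; boundedness will at the same time guarantee that the assignment is well defined. Here I would invoke \eqref{Pas3} of Theorem \ref{th1} together with the standard module inequality $\langle S\rho,S\rho\rangle = \langle\rho,S^*S\rho\rangle\leq\|S\|^2\langle\rho,\rho\rangle$ (valid because $\|S\|^2 I-S^*S\geq0$ is of the form $R^*R$ in $\mathscr{L}(\mathscr{X}')$) and the positivity of $f$:
\[
\|(S\rho)_f\|_f^2 = f(\langle S\rho,S\rho\rangle)\leq\|S\|^2 f(\langle\rho,\rho\rangle) = \|S\|^2\,\|\rho_f\|_f^2 .
\]
Thus $\rho_f\mapsto(S\rho)_f$ is linear and has norm at most $\|S\|$ on $D$; in particular $\rho_f=0$ forces $(S\rho)_f=0$, so the map is well defined and extends uniquely to an operator $S_f\in\mathbb{B}(\mathscr{H}_f)$.

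Finally, since $\widehat{x}_n,\tau\in\mathscr{X}'$, the construction gives $(S\widehat{x}_n)_f=S_f((\widehat{x}_n)_f)$ and $(S\tau)_f=S_f(\tau_f)$; applying the continuous operator $S_f$ to $(\widehat{x}_n)_f\to\tau_f$ yields $(S\widehat{x}_n)_f\to(S\tau)_f$, as required. The only real obstacle is the middle step of manufacturing $S_f$, and within it the one genuinely nontrivial point is the inequality $\langle S\rho,S\rho\rangle\leq\|S\|^2\langle\rho,\rho\rangle$ in the self-dual module $\mathscr{X}'$ combined with \eqref{Pas3}, which is exactly what converts the module-level adjointability of $S$ into honest Hilbert-space boundedness of $S_f$; the linearity of $\rho\mapsto\rho_f$ and the density bookkeeping are routine.
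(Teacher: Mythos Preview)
Your proof is correct, and it takes a genuinely different route from the paper's. The paper does \emph{not} build a bounded operator $S_f$ on $\mathscr{H}_f$; instead it argues in two stages: first it verifies that $(S\widehat{x}_n)_f\to(S\tau)_f$ \emph{weakly} in $\mathscr{H}_f$ (pairing against vectors $x+\mathscr{N}_f$ and using adjointability of $S$ to move it across the inner product), and then it checks separately that $\|(S\widehat{x}_n)_f\|_f\to\|(S\tau)_f\|_f$, again by shuffling $S^*S$ across the inner product. The conclusion then follows from the standard Hilbert-space fact that weak convergence together with convergence of norms implies norm convergence.

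Your argument is more structural: once you have the module inequality $\langle S\rho,S\rho\rangle\leq\|S\|^2\langle\rho,\rho\rangle$ and \eqref{Pas3}, you get a single bounded operator $S_f$ (with $\|S_f\|\leq\|S\|$) and the lemma becomes a one-line continuity statement. This is cleaner and gives slightly more information. The paper's approach, by contrast, avoids invoking that operator inequality and instead leans only on the existence of $S^*$ and the weak-plus-norm lemma from elementary Hilbert space theory; it is more computational but uses nothing beyond adjointability and \eqref{Pas1}--\eqref{Pas3}.
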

\begin{proof}
	At first we show that $(S\widehat x_n)_f\to (S\tau)_f$ weakly in the Hilbert space $\mathscr{H}_f$. To show this fact, suppose that $x\in \mathscr{X}$. We have
	\begin{align*}
	\lim_n(x+\mathscr N_f,(S\widehat {x}_n)_f)_f&=\lim_nf((S\widehat x_n)(x))\qquad ({\rm by~} \eqref{Pas2})\\
	&=\lim_n f(\langle S\widehat x_n, \widehat x\rangle) \qquad ({\rm by~} \eqref{Pas3}) \\
	&=\lim_n f(\langle \widehat x_n, S^*\widehat x\rangle)\\
	&=\lim_n\overline{f(\langle S^*\widehat x,\widehat x_n\rangle)}\\
	&=\lim_n((S^*\widehat x)_f, x_n+\mathscr N_f)_f\\
	&=((S^*\widehat x)_f,\tau_f)_f\\
	&=f(\langle S^* \widehat x, \widetilde \tau\rangle)\\
	&=f(\langle \widehat x, S\tau\rangle)\\
	&=(x+\mathscr N_f,(S\tau)_f)_f.
	\end{align*}
	Since $\mathscr{X}/\mathscr N_f$ is dense in $\mathscr{H}_f$, we conclude that $(S\widehat{x}_n)_f$ weakly converges to $(S\tau)_f$.
	
	Secondly, we show that $\|(S\widehat x_n)_f\|_{f}\to \|(S\tau)_f\|_f$. Indeed,
	\begin{align*}
	\lim_n \|(S\widehat x_n)_f\|_f^2&=\lim_n ((S\widehat x_n)_f,(S\widehat x_n)_f)_f\\
	&=\lim_n f(\langle S\widehat x_n,S\widehat x_n\rangle )\\
	&=\lim_n f(\langle x_n,S^*S\widehat x_n\rangle )\\
	&=\lim_n (x+\mathscr N_f,(S^*S\widehat x_n)_f)_f\\
	&=(\tau_f,(S^*S\tau)_f)\\
	&=f(\langle \tau, S^*S\tau\rangle)\\
	&=f(\langle S\tau, S\tau)\\
	&=\|(S\tau)_f\|_f^2.
	\end{align*}
	Now, the assertion is deduced by utilizing \cite[Exercises 4.21 ,p.~80]{Wei}.
\end{proof}
\begin{lemma}\label{lemmpositive}
	Let $\mathscr{X}$ be a Hilbert $\mathscr{A}$-module over a $W^*$-algebra $\mathscr{A}$ and $\mathscr Y$ be an orthogonally complemented submodule of $\mathscr X$. 
	Let $B:\mathscr{X}\times \mathscr{X}\to \mathscr{A}$ be a bounded $\mathscr{A}$-sesqulinear form on $\mathscr{X}$. Let $\widetilde B$ be the extension $B$ on $\mathscr X'$. If $B$ is positive on $\mathscr Y$ then $\widetilde B$ is positive on $\mathscr Y'$
	\end{lemma}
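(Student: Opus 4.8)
The plan is to test positivity of the $\mathscr{A}$-valued element $\widetilde B(\eta,\eta)$ against normal positive linear functionals. Since $\mathscr{A}$ is a $W^*$-algebra, its normal positive linear functionals form an order-determining family: an element $a\in\mathscr{A}$ satisfies $a\geq 0$ if and only if $f(a)\geq 0$ for every normal positive linear functional $f$ on $\mathscr{A}$ (this condition already forces $a=a^*$). Thus it suffices to fix $\eta\in\mathscr{Y}'$ and a normal positive functional $f$, and to prove that $f(\widetilde B(\eta,\eta))\geq 0$.

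First I would pass to the operator picture. Recall that $\mathscr{X}''=\mathscr{X}'$ is self-dual over a $W^*$-algebra, so by Theorem \ref{ths} there is an adjointable $\widetilde T\in\mathscr L(\mathscr{X}')$ with $\widetilde B(F,G)=\langle \widetilde T F,G\rangle$ for all $F,G\in\mathscr{X}'$. Combining this with the Paschke identity \eqref{Pas3} gives $f(\widetilde B(\eta,\eta))=((\widetilde T\eta)_f,\eta_f)_f$, a genuine inner product in the Hilbert space $\mathscr{H}_f$. The reason for routing through $\widetilde T$ is that $f\circ\widetilde B$ is \emph{not} continuous for the (weaker) $\mathscr{H}_f$-norm, so one cannot feed an $\mathscr{H}_f$-approximation of $\eta$ directly into $\widetilde B$; Lemma \ref{lweak} is precisely the tool that repairs this, and it demands an adjointable operator.

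Next I would realize $\eta$ inside $\mathscr{X}'$ and approximate it by module elements. Writing $\mathscr{X}=\mathscr{Y}\oplus\mathscr{Z}$ and using Lemma \ref{orth}, view $\eta\in\mathscr{Y}'$ as the element of $\mathscr{X}'$ that vanishes on $\mathscr{Z}$. Because $\mathscr{Y}\perp\mathscr{Z}$, the images $\mathscr{Y}/\mathscr{N}_f$ and $\mathscr{Z}/\mathscr{N}_f$ are orthogonal in $\mathscr{H}_f$ and together span a dense subspace; since $(z+\mathscr{N}_f,\eta_f)_f=f(\eta(z))=0$ for $z\in\mathscr{Z}$ by \eqref{Pas2}, the vector $\eta_f$ lies in the closure of $\mathscr{Y}/\mathscr{N}_f$. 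Hence there are $y_n\in\mathscr{Y}$ with $(\widehat{y_n})_f=y_n+\mathscr{N}_f\to\eta_f$ in $\mathscr{H}_f$. This is the step where orthogonal complementedness of $\mathscr{Y}$ is genuinely used.

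Finally I would pass to the limit. Applying Lemma \ref{lweak} with $S=\widetilde T$ gives $(\widetilde T\widehat{y_n})_f\to(\widetilde T\eta)_f$ in norm; since also $(\widehat{y_n})_f\to\eta_f$, joint norm-continuity of the inner product of $\mathscr{H}_f$ yields
\[
f(\widetilde B(\eta,\eta))=((\widetilde T\eta)_f,\eta_f)_f=\lim_n\,((\widetilde T\widehat{y_n})_f,(\widehat{y_n})_f)_f=\lim_n\,f(\widetilde B(\widehat{y_n},\widehat{y_n})).
\]
As $\widetilde B$ extends $B$, each term equals $f(B(y_n,y_n))$, which is $\geq 0$ because $y_n\in\mathscr{Y}$, $B$ is positive on $\mathscr{Y}$, and $f$ is positive; the limit is therefore $\geq 0$. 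This gives $f(\widetilde B(\eta,\eta))\geq 0$ for every normal positive $f$, whence $\widetilde B(\eta,\eta)\geq 0$ and $\widetilde B$ is positive on $\mathscr{Y}'$. I expect the main obstacle to be exactly the topology mismatch between the norm of $\mathscr{X}'$ and the $\mathscr{H}_f$-norm, which is why the argument must be channeled through the adjointable operator $\widetilde T$ and Lemma \ref{lweak} rather than by naive continuity of $\widetilde B$.
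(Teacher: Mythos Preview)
Your proposal is correct and follows essentially the same route as the paper: represent $\widetilde B$ via the adjointable operator $\widetilde T$, approximate $\eta_f$ in $\mathscr{H}_f$ by classes $y_n+\mathscr N_f$ with $y_n\in\mathscr Y$, invoke Lemma \ref{lweak}, and pass to the limit. You are in fact more careful than the paper on two points --- you use normal positive functionals (which is what the Paschke identities \eqref{Pas2}--\eqref{Pas3} actually require) rather than pure states, and you explicitly justify via the orthogonal decomposition why the approximants $y_n$ can be chosen in $\mathscr Y$, a step the paper simply asserts.
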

\begin{proof}
	 Suppose that $\tau\in \mathscr{Y}'$ and $f\in \mathcal {PS}$ is arbitrary. Then there is a sequence $\{y_n\}$ such that $	y_n+\mathcal{N}_f\to \tau_f$ in $\mathscr{H}_f$ from	which we get
	\begin{align*}
	f\widetilde B(\tau,\tau)&=f(\langle \widetilde T\tau,\tau\rangle)\\
	&=((\widetilde T\tau)_f,\tau_f)_f\\
	&=\lim_n ((\widetilde T\widehat y_n)_f,y_n+\mathcal N_f)_f\\
	&=\lim f(\langle \widetilde T \widehat y_n,\widehat{y}_n\rangle)\\
	&=\lim fB(y_n,y_n) \geq 0,
	\end{align*}
where $\widetilde T$ is as in Lemma \ref{self} and Theorem \ref{ths}.
	\end{proof}
The main result of this paper reads as follows.
\begin{theorem}\label{Ta}
	Let $\mathscr{X}$ be a Hilbert $\mathscr{A}$-module over a $W^*$-algebra $\mathscr{A}$ and $\mathscr{Y}$ be a nontrivial orthogonally complemented submodule of $\mathscr{X}$. Let $B:\mathscr{X}\times \mathscr{X}\to \mathscr{A}$ be a bounded $\mathscr{A}$-sesqulinear form on $\mathscr{X}$ and positive on $\mathscr{Y}$. Let $\widetilde B$ be the extension of $B$ on $\mathscr{X}'$. Assume
	there exist $c>0$ and $k>0$ such that
	for every $f\in\mathcal P\mathcal S(\mathscr{A})$ and for every $x\in\mathscr{Y}\backslash\widecheck{\mathscr{Y}}$ there exists a unit vector $y\in\mathscr{Y}$ such that $f(|y|^2)\geq k$ and
	\begin{equation}\label{main2}
	|fB(x,y)|^2\geq cf(|x|^2)f(|y|^2).
	\end{equation}
	Then, the $\widetilde B$-spline interpolation problem has a solution for $\mathscr{Y}'$ if and only if
	\begin{equation}\label{e21}
	B(x,\widecheck{y})=0\qquad (x\in \mathscr{X}, \widecheck{y}\in \widecheck {\mathscr{Y}}).
	\end{equation} 
\end{theorem}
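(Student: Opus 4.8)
The plan is to transport the whole question to the dual module $\mathscr{X}'$, which is self-dual because $\mathscr{A}$ is a $W^*$-algebra, and then to invoke Theorem \ref{th11}. By Lemma \ref{self} the form $B$ extends uniquely to a bounded $\mathscr{A}$-sesquilinear form $\widetilde{B}$ on $\mathscr{X}'$, which (since $\mathscr{X}'$ is self-dual) is represented by Theorem \ref{ths} as $\widetilde{B}(\tau,\rho)=\langle \widetilde{T}\tau,\rho\rangle$ for a unique $\widetilde{T}\in\mathscr{L}(\mathscr{X}')$; by Lemma \ref{orth} the submodule $\mathscr{Y}'$ is orthogonally complemented in $\mathscr{X}'$, with projection $P$; and by Lemma \ref{lemmpositive} the extension $\widetilde{B}$ is positive on $\mathscr{Y}'$. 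Thus $(\mathscr{X}',\mathscr{Y}',\widetilde{B})$ already satisfies the standing hypotheses of Theorem \ref{th11}, and the role of the coercivity hypothesis \eqref{main2} will be solely to supply the closed-range condition demanded there. The argument then splits into an easy necessity direction and a substantive sufficiency direction.

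For necessity, suppose the $\widetilde{B}$-spline interpolation problem has a solution for $\mathscr{Y}'$, and fix $x\in\mathscr{X}$ and $\widecheck{y}\in\widecheck{\mathscr{Y}}$. Since $B(\widecheck{y},\widecheck{y})=0$, the extension property gives $\widetilde{B}(\widehat{\widecheck{y}},\widehat{\widecheck{y}})=0$, so $\widehat{\widecheck{y}}\in\widecheck{\mathscr{Y}'}$, and because $\widetilde{B}$ is positive on $\mathscr{Y}'$, Lemma \ref{lc} places $\widehat{\widecheck{y}}$ in $\widetilde{\mathscr{Y}'}$ as well. Writing the given solution for $\widehat{x}$ as $s=\widehat{x}+\rho$ with $\rho\in\mathscr{Y}'$, I would compute
\[
B(x,\widecheck{y})=\widetilde{B}(\widehat{x},\widehat{\widecheck{y}})=\widetilde{B}(s,\widehat{\widecheck{y}})-\widetilde{B}(\rho,\widehat{\widecheck{y}})=0,
\]
the first term vanishing because $\widehat{\widecheck{y}}\in\mathscr{Y}'$ and $s$ is a $\widetilde{B}$-spline, the second because $\widehat{\widecheck{y}}\in\widetilde{\mathscr{Y}'}$. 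This yields \eqref{e21} and uses neither coercivity nor any closedness.

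For sufficiency, assume \eqref{e21}; I would verify the two hypotheses of Theorem \ref{th11} for $\widetilde{B}$ on $\mathscr{Y}'$. The radical condition comes first: \eqref{e21} says that the restriction $B_0$ of $B$ to $\mathscr{X}\times\widecheck{\mathscr{Y}}$ is identically zero, so by the uniqueness clause of Proposition \ref{selfman} (equivalently Lemma \ref{self}) its extension vanishes, giving $\widetilde{B}(\tau,\widecheck{\rho})=0$ for every $\tau\in\mathscr{X}'$ and every $\widecheck{\rho}$ in the dual of $\widecheck{\mathscr{Y}}$, which after the identification $\widecheck{\mathscr{Y}'}\subseteq(\widecheck{\mathscr{Y}})'$ is exactly condition \eqref{e1} for $\widetilde{B}$. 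The second, genuinely substantive, step is to show that $P\widetilde{T}\mathscr{Y}'$ is closed. For $x\in\mathscr{Y}\setminus\widecheck{\mathscr{Y}}$ and a pure state $f_0$ norming $\langle x,x\rangle$, the computation already recorded in \eqref{msm3} gives $\|P\widetilde{T}\widehat{x}\|\geq (ck)^{1/2}\|x\|$, while the definition of $\widecheck{\mathscr{Y}}$ shows $P\widetilde{T}$ annihilates $\widehat{\widecheck{\mathscr{Y}}}$. To promote this to $\mathscr{Y}'$, given $\tau\in\mathscr{Y}'$ I would pick a pure state $f_0$ with $f_0(|\tau|^2)=\|\tau\|^2$, approximate $\tau_{f_0}$ by $y_n+\mathscr{N}_{f_0}\to\tau_{f_0}$ with $y_n\in\mathscr{Y}$, and apply Lemma \ref{lweak} with the adjointable operator $S=P\widetilde{T}$ to obtain $(P\widetilde{T}\widehat{y_n})_{f_0}\to(P\widetilde{T}\tau)_{f_0}$; feeding \eqref{main2} into this transfers the lower bound to $\tau$, so that $P\widetilde{T}$ is bounded below on the orthogonal complement of $\widecheck{\mathscr{Y}'}$ in $\mathscr{Y}'$ and hence has closed range. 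With both hypotheses verified, Theorem \ref{th11} produces the required $\widetilde{B}$-spline.

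I expect the closed-range step, together with its companion identification $\widecheck{\mathscr{Y}'}=(\widecheck{\mathscr{Y}})'$, to be the main obstacle. The delicate point is that the coercivity witness $y'$ supplied by \eqref{main2} depends on the approximant $y_n$, so I must make sure the constraints $\|y'\|=1$ and $f_0(|y'|^2)\geq k$ are preserved in the limit $\tau_{f_0}$; this is precisely what the uniformity of the constants $c,k$ and the norm-convergence half of Lemma \ref{lweak} are meant to secure. The secondary nuisance is to confirm that no new null vectors of $\widetilde{B}$ appear in $\mathscr{Y}'$ beyond the completion of $\widecheck{\mathscr{Y}}$, i.e.\ that $\widecheck{\mathscr{Y}'}$ is the dual of $\widecheck{\mathscr{Y}}$; the very lower bound established above is what excludes them, since it shows $\widetilde{B}$ remains definite off $\widecheck{\mathscr{Y}}$ after extension.
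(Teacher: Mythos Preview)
Your overall architecture matches the paper's: transport everything to the self-dual module $\mathscr{X}'$, verify the hypotheses of Theorem~\ref{th11} for $(\mathscr{X}',\mathscr{Y}',\widetilde{B})$, and invoke it. The necessity direction and the preparatory lemmas (\ref{self}, \ref{orth}, \ref{lemmpositive}) are used exactly as in the paper.

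The genuine gap is in your verification of the radical condition \eqref{e1} for $\widetilde{B}$, namely $\widetilde{B}(\tau,\sigma)=0$ for all $\tau\in\mathscr{X}'$ and $\sigma\in\widecheck{\mathscr{Y}'}$. The uniqueness-of-extension argument delivers only $\widetilde{B}(\tau,\widehat{\widecheck{y}})=0$ for $\widecheck{y}\in\widecheck{\mathscr{Y}}$; promoting this to arbitrary $\sigma\in\widecheck{\mathscr{Y}'}$ is exactly the ``identification $\widecheck{\mathscr{Y}'}\subseteq(\widecheck{\mathscr{Y}})'$'' you flag as a secondary nuisance, and it does \emph{not} follow from the lower bound on $P\widetilde{T}$ as you suggest. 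That bound is proved for $\tau\notin\widecheck{\mathscr{Y}'}$, so invoking it to describe $\widecheck{\mathscr{Y}'}$ in terms of $\widecheck{\mathscr{Y}}$ is circular. Nothing a priori prevents the extension $\widetilde{B}$ from acquiring new null vectors in $\mathscr{Y}'$ that do not come from $\widecheck{\mathscr{Y}}$, and the whole point of the coercivity hypothesis is ultimately to rule this out---but one has to actually do the work.

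The paper spends the bulk of its proof on precisely this, by a route quite different from your sketch. It puts the $B$- and $\widetilde{B}$-inner products on the quotients $\mathscr{Y}/\widecheck{\mathscr{Y}}$ and $\mathscr{Y}'/\widecheck{\mathscr{Y}'}$, constructs $\Phi:\mathscr{Y}'/\widecheck{\mathscr{Y}'}\to(\mathscr{Y}/\widecheck{\mathscr{Y}})'$ by $\Phi_\sigma(y+\widecheck{\mathscr{Y}})=\widetilde{B}(\sigma,\widehat{y})$, uses the already-established estimate $\|P\widetilde{T}\tau\|\geq ck\|\tau\|$ to show $\Phi$ is bounded below with dense (hence full) range, then for each $\tau\in\mathscr{X}'$ writes $\psi_\tau(y+\widecheck{\mathscr{Y}})=\widetilde{B}(\tau,\widehat{y})$ as a limit of $\Phi_{\sigma_n}$'s and runs a $3\varepsilon$ argument against approximants $y_n+\mathscr{N}_f\to\eta_f$ for $\eta\in\widecheck{\mathscr{Y}'}$. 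A second point you elide in the closed-range step: when approximating $\tau_{g_0}$ by $y_n+\mathscr{N}_{g_0}$, the $y_n$ may land in $\widecheck{\mathscr{Y}}$, where \eqref{main2} is unavailable; the paper fixes this by perturbing $y_n\mapsto y_n+z_n$ with $z_n\in\mathscr{Y}\setminus\widecheck{\mathscr{Y}}$, $z_n\to 0$, which in turn requires the degenerate case $\mathscr{Y}=\widecheck{\mathscr{Y}}$ to be treated separately at the start.
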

\begin{proof}
	Let the $\widetilde B$-spine interpolation problem has a solution. It is easy to see that $\widehat{\widecheck{\mathscr{Y}}}\subset \widecheck{\mathscr{Y}'}$. If $x\in \mathscr{X}$, then there is $\eta\in \mathscr{Y}'$ such that $\widetilde B(\widehat x+\eta,\sigma)=0$ for any $\sigma\in \mathscr{Y}'$. Hence for any $\widecheck y\in \widecheck{\mathscr{Y}}$ we have 
	\[
	B(x,\widecheck y)=\widetilde B(\widehat x,\widehat {\widecheck y})=-\widetilde B(\eta, \widehat {\widecheck y})=0.
	\]
	
	For the reverse assertion, let \eqref{e21} be valid and let $\widetilde T$ be as in Lemma \ref{self} and Theorem \ref{ths}. It follows from Lemma \ref{orth} that ${\mathscr{Y}}'$ is orthogonally complemented submodule of $\mathscr{X}'$. Note, that if $\mathscr{Y}=\widecheck{\mathscr{Y}}$, then $\mathscr{Y}'=\widecheck{\mathscr{Y}'}$. Indeed, let $\sigma,\eta \in \mathscr{Y}'$ and $f\in \mathcal P\mathcal S(\mathscr{A})$. Then there are sequences $\{y_n\},\{y'_n\}\subset \mathscr{Y}$ such that $y_n+\mathscr N_f\to \sigma_f$ and $y'_n+\mathscr N_f\to \eta_f$. Utilizing Lemma \ref{lweak}, we have 
	\begin{align*}
		f\widetilde B(\sigma,\eta )&=f \langle P\widetilde T\sigma ,\eta\rangle \\
		&=((P\widetilde T\sigma)_f,\eta_f)_f\\
		&=\lim_n (P\widetilde T\widehat y_n+\mathscr N_f,\widehat {y'_n}+\mathscr N_f)_f\\
		&=\lim_n f\langle P\widetilde T\widehat y_n,\widehat {y'_n}\rangle\\
		&=\lim_n fB(y_n,y'_n)=0
\end{align*}
So $\widetilde B(\sigma,\eta)=0$. Hence $\sigma\in \widecheck{\mathscr{Y}'}$. Thus, by \eqref{e21}, the $\widetilde B$-spline interpolation problem has a solution. Now suppose $\mathscr{Y}\neq \widecheck{\mathscr{Y}'}$.
	 Let $P$ be the projection of $\mathscr{X}'$ onto $\mathscr{Y}'$. We see by Lemma \ref{lemmpositive} that $\widetilde B$ is positive on $\mathscr{Y}'$.
	 
	 We intend to show that $P\widetilde T\mathscr{Y}'$ is closed:
	
	Let $x\in \mathscr{Y}\backslash\widecheck{\mathscr{Y}}$. Then there is $f_0\in \mathcal P\mathcal S(\mathscr{A})$ such that $f_0(|x|^2)=\|x\|^2$. By assumption, there exists a unit element $y_0\in \mathscr{Y}$ such that $f_0(|y_0|^2)\geq k$ and fulfulls \eqref{main}. We have
	\begin{align*}
	\|P\widetilde T \widehat x\|&=\sup_{f\in \mathcal P\mathcal S(\mathscr{A}),\|\sigma\|=1, \sigma\in \mathscr{Y}'}|f(\langle P\widetilde T \widehat x,\sigma\rangle)| \\
	&=\sup_{f\in \mathcal P\mathcal S(\mathscr{A}),\|\sigma\|=1, \sigma\in \mathscr{Y}'}|f(\langle \widetilde T \widehat x,\sigma\rangle)| \\
	&\geq\sup_{f\in \mathcal P\mathcal S(\mathscr{A}),\|y\|=1}|f(\langle \widetilde T\widehat x,\widehat y\rangle)|\\
	&=\sup_{f\in \mathcal P\mathcal S(\mathscr{A}),\|y\|=1}|f\widetilde B(\widehat x,\widehat y)|\\
	&=\sup_{f\in \mathcal P\mathcal S(\mathscr{A}),\|y\|=1}|fB(x,y)|\\
	&\geq |f_0B(x,y_0)|\\
	&\geq \left(cf_0(|x|)f_0(|y_0)|\right)^\frac{1}{2}\\
	&\geq c^\frac{1}{2} k^\frac{1}{2} \|x\|.
	\end{align*}
 
 In addition, if $x\in \widecheck{\mathscr{Y}}$, then for any $f\in \mathcal P\mathcal S(\mathscr{A})$ and $\sigma\in \mathscr{Y}'$ there is $\{y_n\}\subset \mathscr{Y}$ such that $y_n+\mathscr N_f\to \sigma_f$. We have 
\begin{align*}
|f\langle P\widetilde T \widehat x,\sigma\rangle|&=\lim_n |f\langle P\widetilde T\widehat x,\widehat y_n\rangle|=\lim_n |f(\widetilde B(\widehat x,\widehat y_n))|=\lim_n |f(B(x,y_n))|=0
\end{align*}
In virtue of orthogonal complimentary of 
$\mathscr{Y}'$ in $\mathscr{X}'$ (see Lemma \ref{orth}), we get 
	\begin{align*}
	\|P\widetilde T \widehat x\|&=\sup_{f\in \mathcal P\mathcal S(\mathscr{A}), \sigma\in \mathscr{Y}', \|\sigma\|=1}|f\langle P\widetilde T \widehat x,\sigma\rangle|=0.
	\end{align*}
	We show that 
	\begin{equation}\label{e121}
	\|P\widetilde T\tau\|\geq c k\|\tau\| \qquad (\tau \in \mathscr{Y}'\backslash\widecheck {\mathscr{Y}'}).
	\end{equation}
	Let $\tau\in \mathscr{Y}'\backslash \widecheck{\mathscr{Y}'}$. Let $g_0\in \mathcal P\mathcal S(\mathscr{A})$ be such that $\|\tau_{g_0}\|_{g_0}^2=g_0(\langle \tau,\tau\rangle)=\|\tau\|^2$; cf. \eqref{Pas3}. Let $x_n\in \mathscr{Y}$ be such that $x_n+\mathscr N_{g_0}\to \tau_{g_0}$. We can choose always that $x_n\in \mathscr{Y}\backslash \widecheck {\mathscr{Y}}$. In fact, two cases occurs: 

(i) There is an infinite number of $x_n$s that are in $\mathscr{Y}\backslash \widecheck {\mathscr{Y}}$. Then, by passing to a subsequence of $\{x_n\}$ if necessary, we can assume that $x_n\in \mathscr{Y}\backslash \widecheck {\mathscr{Y}}$.

(ii) There is an infinite number of $x_n$ that are in $\widecheck {\mathscr{Y}}$. So there is a subsequence of $\{x_n\}$ in $\widecheck {\mathscr{Y}}$, denoted by the same notation $\{x_n\}$. Because of $\mathscr{Y}\neq \widecheck{\mathscr{Y}}$, we can choose some $0\neq z_n\in \mathscr{Y}\backslash \widecheck{\mathscr{Y}}$ such that $z_n\to 0$. Applying \eqref{Pas0} and \eqref{Pas1}, we get 
	\begin{align*}
	\|z_n+x_n+\mathscr N_{g_0}-\tau_{g_0}\|_{g_0}^2&=(z_n+x_n+\mathscr N_{g_0}-\tau_{g_0},z_n+x_n+\mathscr N_{g_0}-\tau_{g_0})_{g_0}\\
	&=g_0(\langle z_n,z_n \rangle)+(x_n+\mathscr N_{g_0},x_n+\mathscr N_{g_0})_{g_0}+g_0(\langle z_n,x_n \rangle)\\
&\quad +g_0(\langle x_n,z_n \rangle)-g_0(\tau(z_n))-\overline {g_0(\tau(z_n))}\\
		&\quad -(x_n+\mathscr N_{g_0},\tau_{g_0})_{g_0}-(\tau_{g_0},x_n+\mathscr N_{g_0})_{g_0}\\
		&\quad +\|\tau_{g_0}\|_{g_0}^2
	\end{align*}
	This shows that $z_n+x_n+\mathscr N_{g_0}\to \tau_{g_0}$. Also $z_n+x_n\not \in \widecheck{\mathscr{Y}}$.
	
	 Lemma \ref{lweak} gives that $(P\widetilde T\widehat x_n)_{g_0}\to (P\widetilde T\tau)_{g_0}$. On the other hand, by assumption there is a unit vector $y_n\in \mathscr{Y}$ such that $k\leq g_0(|y_n|^2)\leq 1$ and fulfills \eqref{main2}. Then we have 
	\begin{align*}
 ckg_0(\langle x_n+\mathscr N_{g_0},x_n+\mathscr N_{g_0} \rangle)&=	ckg_0(|x_n|^2)\\
 &\leq cg_0(|x_n|^2)g_0(|y_n|^2)\\
	&\leq |g_0(B(x_n,y_n))|^2\\
	&=|g_0(\widetilde B(\widehat x_n,\widehat y_n))|^2\\
	&=|g_0(\langle \widetilde T \widehat{x}_n,\widehat {y}_n \rangle )|^2\\
	&=|g_0(\langle P \widetilde T \widehat{x}_n,\widehat {y}_n \rangle)|^2\\
	&\leq \|(P\widetilde T\widehat x_n)_{g_0}\|_{g_0}^2\|y_n+\mathscr N_{g_0}\|_{g_0}^2\\
	&=\|(P\widetilde T\widehat x_n)_{g_0}\|_{g_0}^2g_0(|y_n|^2)	\\
	&\leq \|(P\widetilde T\widehat x_n)_{g_0}\|_{g_0}^2.
	\end{align*}
Taking limits in the last inequality as $n\to \infty$ we arrive at 
 	\[
	ck\|\tau\|^2=ck\,g_0(\langle \tau,\tau\rangle)\leq \|(P\widetilde T\tau)_{g_0}\|_{g_0}^2={g_0}(\langle P\widetilde T\tau,P\widetilde T\tau\rangle)\leq \|P\widetilde T\tau\|^2
	\]
		Let $P\widetilde T\tau_n\to \tau$. From \eqref{e121} we deduce that $\tau_n$ converges to some $\tau_0\in \mathscr{Y}'$ and so $P\widetilde T\tau_n\to P\widetilde T \tau_0$. Thus, $ \tau=P\widetilde T \tau_0\in P\widetilde T\mathscr{Y}'$.
	
	Next, we shall show that
	\begin{equation}\label{ch1}
	\widetilde{B}(\tau,\sigma)=0 \qquad (\tau\in \mathscr{X}',\sigma\in \widecheck{\mathscr{Y}'}).
	\end{equation}
	To reach this aim, we need some new constructions as follows.
	
	Suppose that $\tau\in \mathscr{X}'$ and $\sigma \in\widecheck{\mathscr{Y}'}$.	
	Note that we can define an $\mathscr{A}$-valued inner product on $\mathscr{Y}/\widecheck {\mathscr{Y}}$ by
	\[
	\langle y_1+\widecheck {\mathscr{Y}},y_2+\widecheck {\mathscr{Y}}\rangle _B=B(y_1,y_2) \qquad (y_1,y_2\in\mathscr{Y}).
	\]
	This inner product is well defined, since $\widecheck {\mathscr{Y}}=\mathscr{Y}_1$ by Lemma \ref{lc}. Similarly, $\mathscr{Y}'/\widecheck {\mathscr{Y}'}$ is a pre-Hilbert $C^*$-module equipped with the inner product 
	\[\langle \sigma_1+\widecheck {\mathscr{Y}'},\sigma_2+\widecheck {\mathscr{Y}'}\rangle_{\widetilde B}=\widetilde B(\sigma_1,\sigma_2)\qquad (\sigma_1, \sigma_2\in \mathscr{Y}').
	\]
	Let $\mathscr{Y}'_{\widetilde B}$ is the completion of $\mathscr{Y}'/\widecheck {\mathscr{Y}'}$. Let $\sigma \in \mathscr{Y}'$, define $\Phi_\sigma \in \left(\mathscr{Y}/\widecheck {\mathscr{Y}}\right)'$ by 
	\[\Phi_\sigma (y+\widecheck{\mathscr{Y}})=\widetilde B(\sigma,\widehat y )\qquad (y\in \mathscr{Y}).
	\]
	Since $\widetilde B(\sigma, \widecheck y)=0$ for every $\widecheck y\in \widecheck{\mathscr{Y}}$, so $\Phi_\sigma$ is well defined. Define $\Phi :\mathscr{Y}'/\widecheck {\mathscr{Y}'}\to \left(\mathscr{Y}/\widecheck {\mathscr{Y}}\right)'$ by
	\begin{align*}
	\Phi(\sigma +\widecheck{\mathscr{Y}'})=\Phi_\sigma \qquad (\sigma\in \mathscr{Y}'). 
	\end{align*}
	It is obvious that $\Phi$ is well defined and $\Phi$ is bounded. Indeed, for any $\sigma\in \mathscr{Y}'$ we have 
	\begin{align*}
	\|\Phi(\sigma +\widecheck {\mathscr{Y}'})\|^2&=\sup_{\|y+\widecheck{\mathscr{Y}}\|_B=1}\|\Phi_\sigma (y+\widecheck{\mathscr{Y}})\|^2\\
	&=\sup_{\|y+\widecheck{\mathscr{Y}}\|_B=1}\|\widetilde B(\sigma,\widehat y )\|^2\\
	&=\sup_{\|y+\widecheck{\mathscr{Y}}\|_B=1}\|\langle \sigma +\widecheck {\mathscr{Y}'},\widehat y+\widecheck {\mathscr{Y}'}\rangle _{\widetilde B}\|^2\\
	&\leq \sup_{\|y+\widecheck{\mathscr{Y}}\|_B=1}\| \widehat y+\widecheck {\mathscr{Y}'}\|_{\widetilde B}^2\| \sigma+\widecheck {\mathscr{Y}'}\|_{\widetilde B}^2\\
	&= \|\sigma + \widecheck {\mathscr{Y}'}\|_{\widetilde B}^2.
	\end{align*} 
	In addition, $\Phi $ is bounded below. To see this, let $\sigma \in \mathscr{Y}'\backslash\widecheck{\mathscr{Y}'}$. Employing \eqref{e121}, we have
	\begin{align*}
	\|\Phi(\sigma +\widecheck {\mathscr{Y}'})\|&=\sup_{\|y+\widecheck{\mathscr{Y}}\|_B\leq1}\|\Phi(\sigma +\widecheck {\mathscr{Y}'})(y+\widecheck{\mathscr{Y}})\|\\
	&=\sup_{\|y+\widecheck{\mathscr{Y}}\|_B\leq1}\|\widetilde B(\sigma,\widehat y)\|\\ &=\sup_{\|y+\widecheck{\mathscr{Y}}\|_B\leq1}\|\langle P\widetilde T\sigma,\widehat y\rangle\|\\
	&\geq \sup_{\|y\|\leq\|\widetilde B\|^{\frac{-1}{2}}}\|\langle P\widetilde T\sigma,\widehat y\rangle\|\\
	&\qquad\qquad\qquad ({\rm since~} \|B(y,y)=\|\widetilde B(\widehat y,\widehat y)\|\leq \|\widehat B\|\,\|y\|^2)\\
	&=\|\widetilde B\|^{\frac{-1}{2}}\|P\widetilde T\sigma \|\\
	&\geq ck\|\widetilde B\|^{\frac{-1}{2}}\|\sigma\|\qquad({\rm by~} \eqref{e121})\\
	&\geq ck\|\widetilde B\|^{-1}\|\sigma +\widecheck{\mathscr{Y}'}\|_{\widetilde B}.
	\end{align*}
	We can extend $\Phi$ on $\mathscr {Y'}_{\widetilde B}$, denoted by the same notation $\Phi$. Thus the range of $\Phi$ is closed. We show that the range $\mathcal{R}(\Phi)$ of this extension is $(\mathscr{Y}/\widecheck{\mathscr{Y}})'$:
	
	Let $\tau\in \mathcal{R}(\Phi)^\perp$. Then
	\[
	\langle \Phi(\sigma+\widecheck {\mathscr{Y}'}),\tau\rangle=0\qquad (\sigma \in \mathscr{Y}').
	\]
	Let $f\in \mathcal P\mathcal S(\mathscr{A})$. We have 
	\begin{align}\label{msm5}
	((\Phi(\sigma+\widecheck {\mathscr{Y}'}))_f,\tau_f)_f =f(\langle \Phi(\sigma+\widecheck {\mathscr{Y}'},\tau)\rangle)=0
	\end{align}
	Since $\tau_f\in (\mathscr{Y}/\widecheck{\mathscr{Y}})_f$ there are $y_n+\widecheck {\mathscr{Y}}+\mathscr N_f$ such that $y_n+\widecheck {\mathscr{Y}}+\mathscr N_f\to \tau_f$ in the norm topology. Let $y\in \mathscr{Y}$ be arbitrary. Then 
	\begin{align*}
	( y+\widecheck{\mathscr{Y}}+\mathscr N_f,y_n+\widecheck {\mathscr{Y}}+\mathscr N_f)_f&=f(\langle y+\widecheck{\mathscr{Y}}, y_n+\widecheck{\mathscr{Y}} \rangle_B)\\
	&=fB(y,y_n)\\
	&=f\widetilde B(\widehat y,\widehat y_n)\\
	&=f\langle\Phi(\widehat y+\widecheck{\mathscr{Y}'}),\widehat{ y_n+\widecheck {\mathscr{Y}}}\rangle \\
	&=((\Phi(\widehat y+\widecheck {\mathscr{Y}'}))_f,y_n+\widecheck{\mathscr{Y}}+\mathscr N_f)_f\to 0 \quad ({\rm by~} \eqref{msm5})
	\end{align*}
	as $n\to \infty$. Therefore, $y_n+\widecheck {\mathscr{Y}}+\mathscr N_f\to0$ weakly. Hence, $\tau_f=0$. Thus $\tau=0$. This shows that $\mathcal{R}(\Phi)^\perp=0$, by \cite[Theorem 3.2]{LAN}.
	
	Next, we prove that $\widetilde B(\tau,\eta)=0$ for every $\tau\in \mathscr{X}',\eta \in \widecheck{\mathscr{Y}'}$. Let $\tau\in \mathscr{X}'$. Set 
	\[
	\psi_\tau:\mathscr{Y}/\widecheck{\mathscr{Y}}\to \mathscr{A}\,,\quad \psi_\tau(y+\widecheck {\mathscr{Y}})=\widetilde B (\tau,\widehat y).
	\]
	We now show that $\psi_\tau$ is well defined:\\
	Let $\tau \in \mathscr{X}'$ and $\widecheck y\in \widecheck{\mathscr{Y}}$. Let $f$ be any normal positive linear functional on $\mathscr{A}$. There is a sequence $\{x_n\}$ in $\mathscr{X}$ such that $x_n+\mathscr N_f\to \tau_f$. We have 
	\begin{align*}
	f\widetilde B(\tau, \widehat{\widecheck y})&=f(\langle \widetilde T\tau, \widehat{\widecheck y}\rangle)\\
	&=f(\langle \tau, \widetilde T^* \widehat{\widecheck y}\rangle)\\
	&=(\tau_f,(\widetilde T^* \widehat{\widecheck y}y)_f)_f\\
	&=\lim_n (x_n+\mathscr N_f,(\widetilde T^* \widehat{\widecheck y})_f)_f\\
	&=\lim_n f(\langle x_n, \widetilde T ^* \widehat{\widecheck y}\rangle) \\
	&=\lim_n f(\langle \widetilde Tx_n, \widehat{\widecheck y} \rangle) \\
	&=\lim_n f \widetilde B(\widehat x_n, \widehat{\widecheck y})\\
	&=\lim_n fB(x_n,\widecheck y)\\
 &=0.
	\end{align*}
	Since $f$ is arbitrary, we reach $\widetilde B(\tau, \widehat{\widecheck y})=0$
	
	It follows from
	\[\widehat{ya}(x)=\langle ya,x\rangle=a^*\langle y,x\rangle=a^*\widehat{y}(x)=(\widehat{y}a)(x)\qquad ({\rm by~} \eqref{msm44})\]
that $\psi_\tau\in (\mathscr{Y}/\widecheck{\mathscr{Y}})'$. Hence there are $\sigma_n\in \mathscr{Y}'$ such that $\lim_n\Phi_{\sigma_n}=\lim_n \Phi(\sigma_n+\widecheck{\mathscr{Y}'})=\psi_\tau$. Let $\epsilon>0$ be given. There exists $n_1$ such that 
	\[
	\| \widetilde B(\tau,\widehat y)-\widetilde B(\sigma_n,\widehat y)\|<\epsilon\qquad (n\geq n_1, y\in \mathscr{Y}).
	\]
	Let $\eta \in \widecheck{\mathscr{Y}'}$. Then for any $f\in \mathcal P\mathcal S(\mathscr{A})$ there are $y_n\in \mathscr{Y}$ such that $y_n+\mathscr N_f\to \eta_f$. Then there is $n_0\geq n_1$ such that $|f\widetilde B(\sigma,\widehat{y_{n_0}})-f\widetilde B(\sigma, \eta)|<\epsilon$, where $\sigma$ is either $\sigma_{n_0}$ or $\tau$. For each $n\geq n_0$, it holds that 
	\begin{align*}
	|f\widetilde B(\tau,\eta)|
	&\leq| f\widetilde B(\tau,\eta)-f\widetilde B(\tau,\widehat{y_{n_0}})|\\
	&\quad +|f\widetilde B(\tau, \widehat{y_{n_0}})-f\widetilde B(\sigma_{n_0},\widehat{y_{n_0}})|\\
	&\quad+ |f\widetilde B(\sigma_{n_0},\widehat{y_{n_0}})-f\widetilde B(\sigma_{n_0},\eta)|+|f\widetilde B(\sigma_{n_0},\eta)|\\ 
	&\leq \epsilon+\epsilon+\epsilon+0=3\epsilon.
	\end{align*}
By Theorem \ref{th11}, the $\widetilde B$-spline interpolation has a solution.
\end{proof}
Now we give an example in which the conditions of Theorem \ref{Ta} simultaneously occur. 

\begin{example}
	Let $\mathscr B$ be an abelian von Neumann algebra of operators acting on a Hilbert space $\mathscr{H}$. Let $\mathscr A=\mathscr B\oplus \mathscr B$ be the von Neumann algebra of all $u\in \mathbb{B}(\mathscr{H})$ having the representation
\begin{eqnarray}\label{diag2}
	u=\left[\begin{matrix}
	u_1&0\\
	0&u_2
	\end{matrix}\right] \qquad (u_1, u_2\in\mathscr B).
\end{eqnarray}
Let $\mathscr{X}:=\mathscr A$ and $\mathscr{Y}:=\mathscr B\oplus 0$. Define $B:\mathscr{X}\times \mathscr{X}\to\mathscr A$ by
	\[
	B(u,v)=\left[\begin{matrix}
	u_1^*v_1&0\\
	0&0
	\end{matrix}\right],
	\]
where $u$ and $v$ have the representations as presented in \eqref{diag2}. Then $B$ is a bounded $\mathscr{A}$-sesqulinear form on $\mathscr{X}$ and positive on $\mathscr{Y}$. Clearly, $\widecheck{\mathscr{Y}}=\{0\}$. By \cite[Theorem 5.1.6]{R}, any pure state of $\mathscr A$ is multiplicative on $\mathscr A$. Hence,
\[
|fB(u,v)|^2=\left|f\left(\left[\begin{matrix}
u_1^*v_1&0\\
0&0
\end{matrix}\right]\right)\right|^2=\left|f\left(\left[\begin{matrix}
u_1^*&0\\
0&0
\end{matrix}\right]\right)f\left(\left[\begin{matrix}
v_1&0\\
0&0
\end{matrix}\right]\right)\right|^2=f(|u|^2)f(|v|^2)
\]
for all $u, v\in \mathscr{Y}$. It follows from \cite[page 10]{LAN} that $\mathscr{X}$ is self-dual. Hence, Theorem \ref{Ta} ensures that $B$-spline interpolation has a solution for $\mathscr{Y}$. 
 \end{example}

In the following technical example we show that \eqref{main} is crucial. 

\begin{example}
Let $\mathscr A =\mathscr B(\mathscr H)$, where $\mathscr{H}$ is a separable Hilbert space with the standard orthonormal basis $(e_i)$.  Let  $l_2( \mathscr A)$ be the set of all sequences $(T_i)$ such that $\sum T_i^*T_i$ converges in the norm topology. The $\mathscr A$-inner product on $\mathscr{X}=l_2(\mathscr A)$ is defined  by
	\[
\langle (T_i),(S_i)\rangle=\sum_{i=1}^\infty T_i^*S_i
	\]
By \cite[Proposition 2.5.5]{MT}, it is known that 
$$l_2(\mathscr A)'=\left\{(T_i):  \left(\sum_{i=1}^NT_i^*T_i\right)_N {\rm ~is ~uniformly ~bounded}\right\}$$
Set 
$$\mathscr{Y}:=\{(T_i)\in l_2(\mathscr A): T_{2i-1}=0 {\rm ~for~} 1=1, 2, \ldots\}.$$ Then $\mathscr{Y}$ is an orthogonally complemented submodule of $ l_2(\mathscr A)$ with
$$\mathscr{Y}^\perp=\{(T_i)\in l_2(\mathscr A): T_{2i}=0 {\rm ~for~} 1=1, 2, \ldots\}.$$
If the map $\phi :l_2(\mathscr A)\to l_2(\mathscr A)$ is defined by 
\[
(\phi(T_i))_j=\begin{cases}
\frac{1}{j}T_j+\frac{1}{\sqrt{j}}T_{j+1}&{\rm ~even~} j\\
0& {\rm ~odd~}j
\end{cases},
\]
then, evidently, $\phi \in \mathscr L(l_2(\mathscr A))$ and
\[(\phi^*(T_i))_j=\begin{cases}
\frac{1}{j}T_j&{\rm ~even~} j\\
\frac{1}{\sqrt{j}}T_j& {\rm ~odd~} j>1\\
0&~j=1
\end{cases}.
\]
Let us define a bounded $\mathscr A $-sequilinear form $B$ on $\mathscr{X}$ by 
	\[
	B((T_i),(S_i))=\langle  \phi(T_i),S_i\rangle=\sum_{j=1}^\infty\left(\frac{1}{2j}T_{2j}+\frac{1}{\sqrt{2j}}T_{2j+1}\right)^*S_{2j}
	\]
Now suppose $\widetilde B$ is the extension of $B$ on $l_2(\mathscr A)'$. It follows from \cite[page 29]{MT} that
\[
\widetilde B((T_i),(S_i))=w-\lim\sum_{j=1}^\infty\left(\frac{1}{2j}T_{2j}+\frac{1}{\sqrt{2j}}T_{2j+1}\right)^*S_{2j} 
\]
 We see that 
 $$\mathscr{Y}'=\{(T_i)\in l_2(\mathscr A)': T_{2i-1}=0 {\rm ~for~} 1=1, 2, \ldots\}.$$
 
 In addition,
$B$ is positive on $\mathscr{Y}$, since for all $(T_i)\in \mathscr{Y}$ we have 
\begin{align*}
B(\phi(T_i),(T_i))&=\sum_{j=1}^\infty \frac{1}{2j}T_{2j}^*T_{2j}\geq 0\,,
\end{align*}
in which we use the fact that 
\[\varphi(0,T_2,0,T_4, \cdots)=\left(0, \frac{1}{2}T_2, 0, \frac{1}{4}T_4, \ldots\right).\]

The $\widetilde B$-spline interpolation has no solution for $\mathscr{Y}'$. To see this, suppose that $(P_i)\in l_2(\mathscr A)'$ be the sequence of projections  $P_i=e_i\otimes e_i$. Note that  $\sum_{i=1}^\infty P_i\to I$ in the strong operator topology and $P_i$s are pairwise orthogonal. For the element $(P_i)$, if there is $(S_i)\in \mathscr{Y}'$ such that $B((P_i)+(S_i),(Q_i))=0$ for all $(Q_i)\in \mathscr{Y}'$, then we have 
\[w-\lim \sum_{j=1}^\infty\left(\frac{1}{2j}(P_{2j}+S_{2j})+\frac{1}{ \sqrt{2j}}P_{2j+1}\right)^*Q_{2j}=0\,.\]
For each $j$, by choosing $(Q_i)\in \mathscr{Y}'$ with $Q_{2i}=I$ for $i=j$ and $Q_{2i}=0$ otherwise, we arrive at
\[
S_{2j}=-P_{2j}-\sqrt{2j}P_{2j+1}.
\]
Then $(S_i)$ cannot be in $\mathscr{Y}'$, since $\left(\sum_{i=1}^NS_i^*S_i\right)_N$ is not uniformly bounded. Indeed,
\begin{align*}\left\|\sum_{j=1}^NS_{2j}^*S_{2j}e_{2j+1}\right\|&=\left\|\sum_{j=1}^N(P_{2j}+\sqrt{2j}P_{2j+1})e_{2j+1}\right\|\\
&=\left\|\sum_{j=1}^N\sqrt{2j}e_{2j+1}\right\|=\sum_{j=1}^N\sqrt{2j}.
\end{align*}
Note that $\widecheck{\mathscr{Y}}=\{0\}$. In fact, suppose $(T_i)\in \widecheck{\mathscr{Y}}$. For each $(S_i)\in \mathscr{Y}$ we have 
\begin{align*}
B((T_i),(S_i))=\langle  \phi(T_i), (S_i)\rangle=\sum_{j=1}^\infty\left(\frac{1}{2j}T_{2j}+\frac{1}{\sqrt{2j}}T_{2j+1}\right)^*S_{2j}=\sum_{j=1}^\infty\frac{1}{2j}T_{2j}^*S_{2j}=0	
\end{align*}
from which by choosing suitable $(S_i)\in \mathscr{Y}$ as above, we get $T_{2i}=0$ for all $i$. Thus $(T_i)=0$.

Moreover,  for any $(T_i)\in  l_2(\mathscr A)$ and $(S_i)\in \widecheck{\mathscr{Y}}$ we have 
	\[
	B((T_i),(S_i))=B((T_i),0))=0
	\] 
Next, we show that \eqref{main} is not valid. If it was true, then there would exist $c>0$ and $k>0$ such that for all $f\in\mathcal P\mathcal S(\mathscr{A})$ and for all $(T_i)\in\mathscr{Y}\backslash\widecheck{\mathscr{Y}}=\mathscr{Y}$ there exists a unit vector $(S_i)\in\mathscr{Y}$ such that $f(|(S_i)|^2)\geq k$ and
	\begin{equation}\label{mainmain}
	|fB((T_i),(S_i))|^2\geq ckf(|( T_i)|^2).
	\end{equation}

Let $j\in\mathbb{N}$ be arbitrary. Consider $(T_i^j)_i=(0,0,\cdots,P_{2j},0,\cdots)\in\mathscr{Y}$. It is known that the linear functional $f_{j}:\mathscr A\to \mathbb C$ by $f_{j} (X)=\langle X(e_{2j}),e_{2j}\rangle $ is a pure state; see \cite[Theorem 5.1.7]{R}.
	 If there is a unit vector $(S_i^j)\in\mathscr{Y}$ such that $f_{j}(\langle (S_i^j),(S_i^j)\rangle)\geq k$, then we have 
	\begin{align*}
	|f_{j}B((T_i^j),(S_i^j))|^2&=\left|\frac{1}{2j}\langle S_{2j}^je_{2j},e_{2j}\rangle \right |^2\leq \left(\frac{1}{2j}\right)^2\|S_{2j}^j\|^2\\
&\leq \left(\frac{1}{2j}\right)^2\|(S_i^j)\|^2=\left(\frac{1}{2j}\right)^2.
	\end{align*}
	On the other hand,
	\[
	f_{j}\left(\left|(T_i^j)\right|^2\right)=\langle P_{2j}(e_{2j}),e_{2j}\rangle=1
	\]
	Hence \eqref{mainmain}
gives that 
\[
ck=ckf_{j}\left(\left|(T_i^j)\right|^2\right)\leq \left|f_{j}B((T_i^j),(S_i^j))\right|^2\leq \left(\frac{1}{2j}\right)^2,
\]	
which is impossible.
\end{example}

\begin{corollary}
	Let $\mathscr X$ be a Hilbert $\mathscr A$-module over a $W^*$-algebra and $\mathscr Y$ be an orthogonally complemented submodule of $\mathscr X$. Let $B:\mathscr X\times \mathscr X\to \mathscr A$ be a bounded  $\mathscr A$-inner product on $\mathscr X$.  Assume
	there exist $c>0$ and $k>0$ such that
	for every $f\in\mathcal P\mathcal S(\mathscr{A})$ and for every $x\in\mathscr{Y}$ there exists a unit vector $y\in\mathscr{Y}$ such that $f(|y|^2)\geq k$ and
	\begin{equation*}
	|fB(x,y)|^2\geq cf(|x|^2)f(|y|^2).
	\end{equation*}
	Then $\mathscr Y'$ is an orthogonally complemented submodule of $\mathscr X'$ with respect to the inner product $\widetilde B$.
\end{corollary}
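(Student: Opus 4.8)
The plan is to read the existence half off Theorem \ref{Ta} and then upgrade ``a spline solution exists for $\mathscr{Y}'$'' to ``$\mathscr{Y}'$ is orthogonally complemented'' by showing that the extended form is definite on $\mathscr{Y}'$. First I would record two immediate consequences of $B$ being a genuine $\mathscr{A}$-inner product. Since $B(y,y)\ge 0$ for all $y$ and $B(y,y)=0$ forces $y=0$, the set $\mathscr{Y}_1=\{y\in\mathscr{Y}:B(y,y)=0\}$ is $\{0\}$; as $B$ is positive on $\mathscr{Y}$, Lemma \ref{lc} gives $\widecheck{\mathscr{Y}}=\mathscr{Y}_1=\{0\}$. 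Consequently condition \eqref{e21} holds vacuously, and the coercivity hypothesis assumed here (for every $x\in\mathscr{Y}$) is exactly the one required by Theorem \ref{Ta} on $\mathscr{Y}\backslash\widecheck{\mathscr{Y}}$. Hence Theorem \ref{Ta} applies: the $\widetilde{B}$-spline interpolation problem has a solution for $\mathscr{Y}'$, so for each $\tau\in\mathscr{X}'$ there is a $\widetilde{B}$-spline $s\in\tau+\mathscr{Y}'$ with $\widetilde{B}(s,\sigma)=0$ for all $\sigma\in\mathscr{Y}'$. Writing $\tau=s-(s-\tau)$ with $s-\tau\in\mathscr{Y}'$ shows $\mathscr{X}'=\mathscr{Y}'+(\mathscr{Y}')^{\perp_{\widetilde{B}}}$, where $(\mathscr{Y}')^{\perp_{\widetilde{B}}}=\{s\in\mathscr{X}':\widetilde{B}(s,\sigma)=0\ \text{for all}\ \sigma\in\mathscr{Y}'\}$.

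Next I would prove that this sum is direct, that is, $\mathscr{Y}'\cap(\mathscr{Y}')^{\perp_{\widetilde{B}}}=\widecheck{\mathscr{Y}'}=\{0\}$. Let $\widetilde{T}\in\mathscr{L}(\mathscr{X}')$ represent $\widetilde{B}$ (Lemma \ref{self} and Theorem \ref{ths}) and let $P$ be the projection of $\mathscr{X}'$ onto $\mathscr{Y}'$; as in the computation of Theorem \ref{th11} one checks $\widecheck{\mathscr{Y}'}=\ker(P\widetilde{T}|_{\mathscr{Y}'})$, using that $\mathscr{Y}'$ is self-dual. Given $0\ne\tau\in\mathscr{Y}'$, choose $g_0\in\mathcal{P}\mathcal{S}(\mathscr{A})$ with $g_0(\langle\tau,\tau\rangle)=\|\tau\|^2$ and approximants $x_n\in\mathscr{Y}$ with $x_n+\mathscr{N}_{g_0}\to\tau_{g_0}$, so that $g_0(|x_n|^2)\to\|\tau\|^2$. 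Applying the coercivity hypothesis to each $x_n$ together with the Cauchy--Schwarz inequality in $\mathscr{H}_{g_0}$, exactly as in the derivation of \eqref{e121}, and passing to the limit via Lemma \ref{lweak} (so that $(P\widetilde{T}\widehat{x}_n)_{g_0}\to(P\widetilde{T}\tau)_{g_0}$), yields $ck\|\tau\|^2\le\|P\widetilde{T}\tau\|^2$. Thus $P\widetilde{T}\tau\ne0$, whence $\tau\notin\widecheck{\mathscr{Y}'}$; as $\tau$ was an arbitrary nonzero element of $\mathscr{Y}'$, we conclude $\widecheck{\mathscr{Y}'}=\{0\}$.

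Combining the two steps gives $\mathscr{X}'=\mathscr{Y}'\oplus(\mathscr{Y}')^{\perp_{\widetilde{B}}}$, and since $(\mathscr{Y}')^{\perp_{\widetilde{B}}}=\ker(P\widetilde{T})$ is a closed submodule, this is precisely the statement that $\mathscr{Y}'$ is orthogonally complemented in $\mathscr{X}'$ with respect to $\widetilde{B}$ (the decomposition being unique by Proposition \ref{uniqueness}, since $\widecheck{\mathscr{Y}'}=\{0\}$). The reduction to Theorem \ref{Ta} is routine; the step carrying the real content, and the one I expect to be the main obstacle, is the definiteness $\widecheck{\mathscr{Y}'}=\{0\}$, since it does not follow formally from $B$ being an inner product on $\mathscr{X}$ but requires transporting the coercivity estimate from $\mathscr{Y}$ to the dual $\mathscr{Y}'$ through the $\mathscr{H}_{g_0}$-approximation of Lemma \ref{lweak}.
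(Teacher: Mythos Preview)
Your proof is correct and follows essentially the same route as the paper: note that $\widecheck{\mathscr Y}=\{0\}$ because $B$ is a genuine inner product, apply Theorem \ref{Ta} to obtain a $\widetilde B$-spline in every coset $\tau+\mathscr Y'$, and then use the coercivity hypothesis, transported to $\mathscr Y'$ through the $\mathscr H_{g_0}$-approximation of Lemma \ref{lweak}, to force uniqueness of the decomposition.

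The one substantive difference is what each argument establishes in the uniqueness step. The paper bounds $f\widetilde B(\tau,\tau)$ from below by $\frac{ck}{\|B\|}\|\tau\|^2$ directly (via Cauchy--Schwarz for the positive form $fB$ together with the coercivity estimate on the approximants), thereby showing that $\widetilde B$ is a genuine $\mathscr A$-valued inner product on $\mathscr X'$; this both justifies the phrase ``the inner product $\widetilde B$'' in the statement and immediately gives $\widecheck{\mathscr Y'}=\{0\}$. You instead reuse the estimate \eqref{e121} from the proof of Theorem \ref{Ta} to get $\|P\widetilde T\tau\|^2\geq ck\|\tau\|^2$ on $\mathscr Y'$, which yields $\widecheck{\mathscr Y'}=\{0\}$ and hence the direct-sum decomposition, but does not by itself verify that $\widetilde B$ is definite on all of $\mathscr X'$. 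Your version is a bit more economical for the decomposition; the paper's version delivers the extra conclusion that $\widetilde B$ really is an inner product, which the statement of the corollary presupposes.
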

\begin{proof}
	By use of Lemma \ref{lemmpositive} we see that $\widetilde B$ is positive on $\mathscr X'$. Also if $0\neq \tau\in \mathscr X'$ and $f\in \mathcal P(\mathscr A)$ is such that $\|\tau\|^2=f(\langle \tau,\tau\rangle)=\|\tau_f\|^2$, then  there is a sequence $\{y_n\}$ such that $	y_n+\mathcal{N}_f\to \tau_f$ in $\mathscr{H}_f$. For any $y_n\in \mathscr Y$, let unit vector  $z_n\in \mathscr Y$ be such that $f(|z_n|^2)\geq k$ and
	\begin{equation*}
	|fB(y_n,z_n)|^2\geq cf(|y_n|^2)f(|z_n|^2).
	\end{equation*}
With $\widetilde{T}$ as in Lemma \ref{self} and Theorem \ref{ths} we have
	\begin{align*}
	f\widetilde B(\tau,\tau)&=f(\langle \widetilde T\tau,\tau\rangle)\\
	&=((\widetilde T\tau)_f,\tau_f)_f\\
	&=\lim_n ((\widetilde T\widehat y_n)_f,y_n+\mathcal N_f)_f\\
	&=\lim_n f(\langle \widetilde T \widehat y_n,\widehat{y}_n\rangle)\\
	&=\lim_n fB(y_n,y_n) \\
	&\geq \frac{1}{\|B\|} \lim_n fB(y_n,y_n)fB(z_n,z_n) \\
	&\geq \frac{1}{\|B\|} \lim_n \left|fB(y_n,z_n)\right|^2\qquad ({\rm ~by~ the~Cauchy-Schwarz~inequality})\\
	&\geq \frac{1}{\|B\|}\lim_n \left(cf\left(|y_n|^2\right)f\left(|z_n|^2\right)\right)\\
	&\geq \frac{ck}{\|B\|}\lim_nf\left(\langle y_n,y_n\rangle\right)\\
    &= \frac{ck}{\|B\|}f(\langle \tau,\tau\rangle)>0\\
	&= \frac{ck}{\|B\|}\|\tau\|^2>0,
	\end{align*}
which shows that $\widetilde B(\tau,\tau)>0$. Hence $\widetilde B$ is an inner product on $\mathscr X'$. \\
Since $B$ is an $\mathscr A$-valued inner product, $\widecheck{\mathscr Y}=\{0\}$, and so \eqref{e21} holds. Hence, the hypotheses of Theorem \ref{Ta} are fulfilled. Therefore the $\widetilde B$-spline interpolation problem has a solution for $\mathscr{Y}'$. If we set 
	\[
	S_{\widetilde B}=\{s: s {\rm ~is ~ a~} \widetilde B-{\rm spline}\},
	\]
	then for each $\tau \in \mathscr X'$, there are unique elements $s\in S_{\widetilde B}$ and $\rho\in \mathscr Y'$ such that $\tau=s+\rho$. Thus, $\mathscr X'=S_{\widetilde B}\oplus \mathscr Y'$.
\end{proof}

As a consequence, we show when an orthogonally complemented submodule of a self-dual Hilbert $W^*$-module $\mathscr{X}$ is orthogonally complemented with respect to another $C^*$-inner product on $\mathscr{X}$.

\begin{corollary}
		Let $\mathscr X$ be a self-dual  Hilbert $\mathscr A$-module over a $W^*$-algebra and $\mathscr Y$ be an orthogonally complemented submodule of $\mathscr X$. Let $B:\mathscr X\times \mathscr X\to \mathscr A$ be a bounded inner product on $\mathscr X$.  Assume there exist $c>0$ and $k>0$ such that for every $f\in\mathcal P\mathcal S(\mathscr{A})$ and for every $x\in\mathscr{Y}$ there exists a unit vector $y\in\mathscr{Y}$ such that $f(|y|^2)\geq k$ and
\begin{equation}\label{main2}
	|fB(x,y)|^2\geq cf(|x|^2)f(|y|^2).
\end{equation}
Then $\mathscr Y$ is an orthogonally complemented submodule of $\mathscr X$ with respect to the inner product $B$.
\end{corollary}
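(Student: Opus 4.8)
The plan is to deduce this corollary directly from the preceding one by exploiting self-duality to collapse every dual object back onto its original. First I would record the identifications that self-duality supplies. Since $\mathscr{X}$ is self-dual, the canonical isometric $\mathscr{A}$-linear map $x\mapsto\widehat{x}$ is a bijection of $\mathscr{X}$ onto $\mathscr{X}'$, and by \eqref{Pas2} it satisfies $\langle\widehat{y},\widehat{x}\rangle=\widehat{y}(x)=\langle y,x\rangle$, so it is a unitary isomorphism of Hilbert $\mathscr{A}$-modules. Under this isomorphism the extension $\widetilde{B}$ of $B$ to $\mathscr{X}'$ furnished by Lemma \ref{self} is carried back to $B$ itself: indeed $\widetilde{B}(\widehat{x},\widehat{y})=B(x,y)$ for all $x,y\in\mathscr{X}$, and by uniqueness of the extension there is nothing else for $\widetilde{B}$ to be once $\mathscr{X}'=\widehat{\mathscr{X}}$. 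Moreover, because $B$ is an $\mathscr{A}$-valued inner product we have $\widecheck{\mathscr{Y}}=\{0\}$, so the degeneracy condition \eqref{e21} holds trivially.

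Next I would verify that $\mathscr{Y}$ is itself self-dual, so that $\mathscr{Y}'=\widehat{\mathscr{Y}}$ corresponds to $\mathscr{Y}$ under $x\mapsto\widehat{x}$. Writing the orthogonal decomposition $\mathscr{X}=\mathscr{Y}\oplus\mathscr{Z}$, the algebraic splitting established in Proposition \ref{orthman} (and its $W^*$ form, Lemma \ref{orth}) gives $\mathscr{X}'=\mathscr{Y}'\oplus\mathscr{Z}'$. Combining $\widehat{\mathscr{X}}=\mathscr{X}'$ with the inclusions $\widehat{\mathscr{Y}}\subseteq\mathscr{Y}'$ and $\widehat{\mathscr{Z}}\subseteq\mathscr{Z}'$ forces $\widehat{\mathscr{Y}}=\mathscr{Y}'$ and $\widehat{\mathscr{Z}}=\mathscr{Z}'$. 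Hence $\mathscr{Y}$ is self-dual and the unitary $x\mapsto\widehat{x}$ maps $\mathscr{Y}$ onto $\mathscr{Y}'$.

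With these identifications in hand, the hypotheses of the present corollary are verbatim those of the preceding corollary, so that corollary applies and yields $\mathscr{X}'=S_{\widetilde{B}}\oplus\mathscr{Y}'$, where $S_{\widetilde{B}}$ is the set of $\widetilde{B}$-splines, i.e. the $\widetilde{B}$-orthogonal complement of $\mathscr{Y}'$. Transporting this decomposition through the unitary $x\mapsto\widehat{x}$, which intertwines $\widetilde{B}$ with $B$ and $\mathscr{Y}'$ with $\mathscr{Y}$, I obtain $\mathscr{X}=S_{B}\oplus\mathscr{Y}$ with $S_{B}=\{s\in\mathscr{X}:B(s,y)=0\text{ for all }y\in\mathscr{Y}\}$; since $B$ is Hermitian, $S_{B}$ is precisely the orthogonal complement of $\mathscr{Y}$ with respect to $B$, which is the desired conclusion. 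The only content beyond the previous corollary is the bookkeeping of these identifications, and the single step deserving care is the self-duality of $\mathscr{Y}$, i.e. the equality $\mathscr{Y}'=\widehat{\mathscr{Y}}$; that is where I would spend the most attention, the remainder being a routine transport of structure along an inner-product-preserving module isomorphism.
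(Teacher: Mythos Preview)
Your proposal is correct and matches the paper's intended approach: the paper states this corollary without proof, immediately after the preceding corollary, so its implicit argument is precisely the transport-of-structure you describe, using self-duality of $\mathscr{X}$ to identify $\mathscr{X}'$, $\mathscr{Y}'$, and $\widetilde{B}$ with $\mathscr{X}$, $\mathscr{Y}$, and $B$ respectively. Your careful verification that $\mathscr{Y}'=\widehat{\mathscr{Y}}$ (i.e.\ that orthogonal summands of a self-dual module are self-dual) is the right place to spend effort, and your argument via $\mathscr{X}'=\mathscr{Y}'\oplus\mathscr{Z}'$ together with $\widehat{\mathscr{Y}}\subseteq\mathscr{Y}'$, $\widehat{\mathscr{Z}}\subseteq\mathscr{Z}'$ is sound.
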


\section{Solutions of the $B$-spline interpolation problem for Hilbert $C^*$-modules over $C^*$-ideals of $W^*$-algebras}

Weakly dense, two-sided $C^*$-ideals $\mathscr{A}$ of $W^*$-algebras $\mathscr{D}$ are of interest because for them the $B$-spline interpolation problem for Hilbert $C^*$-modules over them can be solved to the affirmative inside the $W^*$-algebra. These ideals do not have a unit. Examples include the compact $C^*$-algebras inside particular type ${\rm I}$ $W^*$-algebras, but also $C^*$-ideals of ${\rm II}_\infty$ $W^*$-factors like the norm-closures of the set of elements of finite trace or the set of all elements with source or range projections of finite type. 

In the situation characterized above the multiplier algebra $M(\mathscr{A})$ of the $C^*$-algebra $\mathscr{A}$ coincides with the $W^*$-algebra $\mathscr{D}$. Obviously, $\mathscr{D} \subseteq M(\mathscr{A})$ by the two-sided ideal property of $\mathscr{A}$ in $\mathscr{D}$ and by the weak density supposition on $\mathscr{A}$ in $\mathscr{D}$. Conversely, every isometric faithful $*$-representation of a $C^*$-algebra $\mathscr{A}$ on a Hilbert space induces an isometric $*$-representation of its multiplier algebra $M(\mathscr{A})$ inside the bicommutant of this representation; cf. \cite[Proposition 2.2.11 and Example 2.F]{WO}. So $M(\mathscr{A})=\mathscr{D}$.  Note, that strict convergence with respect to $\mathscr{A}$ transfers to the $*$-representation.

By the Cohen--Hewitt factorization theorem (\cite{FD} and \cite[Proposition 2.31]{RW}) for Banach $C^*$-modules any element $x$ of a Hilbert $C^*$-module can be decomposed as $x=ya$ for a certain element $y \in X$ and a certain element $a \in A$. This decomposition is non-trivial for non-unital $C^*$-algebras. Consequently, given the situation under discussion, any Hilbert $\mathscr{A}$-module is a Hilbert $M(\mathscr{A})$-module, too. Moreover, the construction of the $\mathscr{A}$-(bi)dual and $M(\mathscr{A})$-(bi)dual Banach $C^*$-modules of a given Hilbert $\mathscr{A}$-module $\mathscr{X}$ result in exactly the same (bi)dual Banach $C^*$-modules $\mathscr{X}'$ and $\mathscr{X}''$, respectively. So we can conclude $\mathscr{X}' = \mathscr{X}''$ in our situation, since $\mathscr{X}$ and $\mathscr{X}'$ are Hilbert $W^*$-modules over $M(\mathscr{A})=\mathscr{D}$ and Paschke's lifting of the $M(\mathscr{A})$-valued inner product on $\mathscr{X}$ to $\mathscr{X}'$ applies turning the latter into a self-dual Hilbert $W^*$-module over $M(\mathscr{A})$; cf. \cite{Pa}. 

\begin{theorem}
   Let $\mathscr{A}$ be a $C^*$-algebra that is a weakly dense, two-sided ideal in a $W^*$-algebra.
   Let $\mathscr{X}$ be a Hilbert $\mathscr{A}$-module and $\mathscr{Y}$ be a nontrivial orthogonally complemented submodule of $\mathscr{X}$. 
   Let $B:\mathscr{X}\times \mathscr{X}\to \mathscr{A}$ be a bounded $\mathscr{A}$-sesqulinear form on $\mathscr{X}$ and positive on $\mathscr{Y}$. 
   Denote by $\widetilde B$ the extension of $B$ on $\mathscr{X}'$. Assume there exist $c>0$ and $k>0$ such that for every $f	\in\mathcal P\mathcal S(\mathscr{A})$ and for every $x\in\mathscr{Y}\backslash\widecheck{\mathscr{Y}}$ there exists a unit vector $y\in\mathscr{Y}$ such that $f(|y|^2)\geq k$
   and
   \begin{equation}\label{main}
	|fB(x,y)|^2\geq cf(|x|^2)f(|y|^2).
   \end{equation}
   Then, the $\widetilde B$-spline interpolation problem has a solution for $\mathscr{Y}'$ if and only if
   \begin{equation*}
	B(x,\widecheck{y})=0\qquad (x\in \mathscr{X}, \widecheck{y}\in \widecheck {\mathscr{Y}}).
   \end{equation*} 
\end{theorem}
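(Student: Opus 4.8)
The plan is to derive the statement directly from Theorem~\ref{Ta} by viewing $\mathscr{X}$ as a Hilbert module over the $W^*$-algebra $M(\mathscr{A})=\mathscr{D}$. The discussion preceding the statement already provides the identifications needed: under the weak-density and two-sided ideal hypotheses one has $M(\mathscr{A})=\mathscr{D}$, every Hilbert $\mathscr{A}$-module is canonically a Hilbert $M(\mathscr{A})$-module by Cohen--Hewitt factorization, and the $\mathscr{A}$- and $M(\mathscr{A})$-duals of $\mathscr{X}$ coincide, so that $\mathscr{X}'=\mathscr{X}''$ becomes a self-dual Hilbert $W^*$-module over $M(\mathscr{A})$ carrying Paschke's lifted inner product (Theorem~\ref{th1}). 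Thus the entire ambient setting of Theorem~\ref{Ta} is in force over $M(\mathscr{A})$, and the task reduces to checking that each hypothesis transfers across the change of coefficient algebra.

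First I would confirm the structural hypotheses. Since the $\mathscr{A}$-valued inner product, and hence the decomposition $\mathscr{X}=\mathscr{Y}\oplus\mathscr{Y}^{\perp}$, is untouched by enlarging the coefficients, $\mathscr{Y}$ remains orthogonally complemented over $M(\mathscr{A})$. The form $B$ is $\mathscr{A}$-valued, hence $M(\mathscr{A})$-valued, and writing $y=y'a$ with $a\in\mathscr{A}$ (Cohen--Hewitt) and using $\mathscr{A}\cdot M(\mathscr{A})\subseteq\mathscr{A}$ one verifies $B(x,ym)=B(x,y)m$ and $B(xm,y)=m^{*}B(x,y)$ for $m\in M(\mathscr{A})$, so $B$ is automatically $M(\mathscr{A})$-sesquilinear; positivity of $B$ on $\mathscr{Y}$ is a pointwise condition and so is preserved. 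Finally $\widecheck{\mathscr{Y}}$ depends only on $B$, so it is the same subset computed over either algebra, and $\widetilde{B}$ is the common extension to $\mathscr{X}'=\mathscr{X}''$ supplied by Lemma~\ref{self}.

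The main obstacle is the transfer of the coercivity hypothesis~\eqref{main}, which is assumed for $f\in\mathcal P\mathcal S(\mathscr{A})$ but is needed for $f\in\mathcal P\mathcal S(M(\mathscr{A}))$ in order to feed Theorem~\ref{Ta}. The resolution rests on the observation that \eqref{main} evaluates $f$ only at the elements $|x|^2,|y|^2,B(x,y)$, all of which lie in the ideal $\mathscr{A}$. Because $\mathscr{A}$ is weakly dense, hence an essential ideal, each pure state of $\mathscr{A}$ has a unique extension to a pure state of $M(\mathscr{A})$, and for this extension the inequality over $M(\mathscr{A})$ is literally identical to the one over $\mathscr{A}$. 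A pure state $g\in\mathcal P\mathcal S(M(\mathscr{A}))$ with $g|_{\mathscr{A}}=0$ gives $g(|y|^2)=0$ for all $y\in\mathscr{X}$ and so admits no unit vector with $g(|y|^2)\ge k$; the point is that such states never arise in the proof of Theorem~\ref{Ta}, where the only pure state invoked for a given $x\in\mathscr{Y}\setminus\widecheck{\mathscr{Y}}$ is a norm-achiever $f_0$ with $f_0(|x|^2)=\|x\|^2>0$. Any such norm-achiever is non-annihilating on $\mathscr{A}$, hence is the unique extension of a pure state of $\mathscr{A}$, and the hypothesis supplies exactly the inequality required at that $f_0$.

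Having transferred all hypotheses, I would conclude by applying Theorem~\ref{Ta} over the $W^*$-algebra $M(\mathscr{A})$: its conclusion yields both the necessity of the orthogonality condition $B(x,\widecheck{y})=0$ and, under~\eqref{main}, its sufficiency for solvability of the $\widetilde{B}$-spline interpolation problem for $\mathscr{Y}'$. The delicate step is precisely the pure-state matching of the third paragraph, since it is there that the gap between $\mathcal P\mathcal S(\mathscr{A})$ and $\mathcal P\mathcal S(M(\mathscr{A}))$ could otherwise render the hypothesis insufficient; recognizing that the proof of Theorem~\ref{Ta} invokes only norm-achieving pure states on positive elements of $\mathscr{A}$ is what closes that gap.
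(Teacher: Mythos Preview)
Your approach matches the paper's: both identify $M(\mathscr{A})$ with the ambient $W^*$-algebra, regard $\mathscr{X}$ as a Hilbert $M(\mathscr{A})$-module (with $\mathscr{X}'$ self-dual), and then invoke Theorem~\ref{Ta}. The paper's proof is in fact much terser than yours --- it records the identifications (citing the multiplier-module papers \cite{BG1,BG2}) and applies Theorem~\ref{Ta} without further comment, in particular without discussing the transfer of the coercivity hypothesis between $\mathcal{PS}(\mathscr{A})$ and $\mathcal{PS}(M(\mathscr{A}))$.

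Your third paragraph therefore isolates a genuine subtlety the paper glosses over. Your resolution is correct for the pure state $f_0$ chosen in the proof of Theorem~\ref{Ta} with $f_0(|x|^2)=\|x\|^2$ for $x\in\mathscr{Y}$, since $|x|^2\in\mathscr{A}$. However, that proof also invokes the hypothesis at a pure state $g_0$ chosen with $g_0(\langle\tau,\tau\rangle)=\|\tau\|^2$ for $\tau\in\mathscr{Y}'$, where $\langle\tau,\tau\rangle$ lies only in $M(\mathscr{A})$; your claim that ``only norm-achieving pure states on positive elements of $\mathscr{A}$'' are needed overlooks this instance. One can still close the gap: if $g_0$ annihilated $\mathscr{A}$ then every $\langle x,x\rangle\in\mathscr{A}$ would lie in its kernel, so $\mathscr{H}_{g_0}=0$, hence $\tau_{g_0}=0$ and (via \eqref{Pas3}) $g_0(\langle\tau,\tau\rangle)=0$, contradicting the choice of $g_0$. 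With this addition your argument is complete and strictly more careful than the paper's.
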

\begin{proof}
Because of the special properties of the $C^*$-algebra $\mathcal{A}$ as an ideal inside a $W^*$-algebra the latter can be identified with the multiplier $C^*$-algebra $M(\mathscr{A})$. Therefore, $\mathscr{X}$ and $\mathscr{Y}$ are Hilbert $M(\mathscr{A})$-modules as well. The self-dual $M(\mathscr{A})$-dual Hilbert $M(\mathscr{A})$-module $\mathscr{X}'$ can be identified with the multiplier module $M(\mathscr{X})$ of $\mathscr{X}$; see \cite{BG1, BG2}. So
we can apply Theorem \ref{Ta} and the result follows.
\end{proof}

By \cite[Theorem 2.4]{Brown15}, among the $\sigma$-unital and unital $C^*$-algebras $\mathscr{A}$ with $W^*$-algebras $M(\mathscr{A})$ only compact $C^*$-algebras and (unital) $W^*$-algebras have this property. The $C^*$-ideals of ${\rm II}_\infty$-$W^*$-factors like the norm-closures of the set of elements of finite trace or the set of all elements with source or range projections of finite type do not have a strictly positive element; see \cite[Proposition 4.5]{AP}. The non-$\sigma$-unital case is not classified yet. However, the $W^*$-algebra of all bounded linear operators on a non-separable Hilbert space possesses more norm-closed two-sided $C^*$-ideals beside the $C^*$-algebra of compact operators, e.g. all bounded linear operators with separable domain and range. Diving into the chain of non-equal cardinalities of sets seen as dimensions of underlying Hilbert spaces reveals more examples, even without assuming the continuum hypothesis.  

\textbf{Acknowledgement.} The authors would like to sincerely thank the referee for some helpful comments improving the paper. The fourth author (corresponding author) is supported by a grant from Ferdowsi Univeristy of Mashhad (No. 2/52682).


\begin{thebibliography}{99}

\bibitem{ABR}  \textsc{V. Abraham}, \textit{On the existence and uniqueness of $M$-splines}, J. Approx. Theory \textbf{43} (1985), 36-42.

\bibitem{AP}  \textsc{Ch. A. Akemann} and  \textsc{G. K. Pedersen}, \textit{Idealperturbations of elements in $C^*$-algebras}, Math. Scand. \textbf{41}(1977), 117-139.

\bibitem{ARC}  \textsc{R. Arcangéli},  \textsc{M. C. L\'{o}pez de Silanes}, and  \textsc{J. C. Torrens}, \textit{Multidimensional minimizing splines. Theory and applications}, Grenoble Sciences. Kluwer Academic Publishers, Boston, MA, 2004.

\bibitem{ARA} \textsc{Lj. Aramba\v{s}i\'{c}} and  \textsc{R. Raji\'{c}}, \textit{Operator version of the best approximation problem in Hilbert $C^\ast$-modules}, J. Math. Anal. Appl. \textbf{413} (2014), no. 1, 311--320.

\bibitem{BG1}  \textsc{D. Baki\'c} and   \textsc{B. Gulja\v{s}}, \textit{On a class of module maps of Hilbert $C^*$-modules}, Math. Commun. \textbf{7} (2003), 177--192.

\bibitem{BG2}  \textsc{D. Baki\'c} and  \textsc{B. Gulja\v{s}}, \textit{Extensions of Hilbert $C^*$-modules, I}, Houston J. Math. \textbf{30} (2004), 537--558.

\bibitem{Brown15}  \textsc{L. G. Brown}, \textit{When is every quasi-multiplier a multiplier?}, J. Operator Theory \textbf{74}(2015), 125-132.

\bibitem{ESK}  \textsc{R. Eskandari},  \textsc{M. Frank},  \textsc{V. Manuilov}, and  \textsc{M. S. Moslehian}, \textit{Extensions of the Lax--Milgram theorem to Hilbert $C^*$-modules}, Positivity (2019). https://doi.org/10.1007/s11117-019-00726-9.

\bibitem{FD}  \textsc{J. M. G. Fell} and  \textsc{R. S. Doran}, \textit{Representations of $*$-algebras, Locally Compact Groups, and Banach $*$-algebraic Bundles, V. 1: Basic representation theory of groups and algebras}, Academic Press, 1988.

\bibitem{FRA1}  \textsc{M. Frank}, \textit{Geometrical aspects of Hilbert $C^*$-modules}, Positivity \textbf{3}(1999), 215-243.

\bibitem{FRA2} \textsc{ M. Frank},  \textsc{A. S. Mishchenko}, and  \textsc{A. A. Pavlov}, \textit{Orthogonality-preserving, $C^*$-conformal and conformal module mappings on Hilbert $C^*$-modules}, J. Funct. Anal. \textbf{260} (2011), no. 2, 327--339. 

\bibitem{LAN}  \textsc{E. C. Lance}, \textit{Hilbert $C^*$-modules. A toolkit for operator algebraists}, London Mathematical Society Lecture Note Series, vol. 210, Cambridge University Press, Cambridge, 1995.

\bibitem{LUC}  \textsc{T. R. Lucas}, \textit{$M$-splines}, J. Approximation Theory \textbf{5} (1972), 1--14. 

\bibitem{MT}  \textsc{V. M. Manuilov} and  \textsc{E. V. Troitsky}, \textit{Hilbert $C^*$-modules}, In: Translations of Mathematical Monographs. 226, American Mathematical Society, Providence, RI, 2005.

\bibitem{R}  \textsc{G. J. Murphy}, \textit{$C^*$-algebras and operator theory}, Academic Press, INC, 1990.

\bibitem{Pa}  \textsc{W. L. Paschke}, \textit{Inner product modules over $B^*$-algebras}, Trans. Amer. Math. Soc. \textbf{182} (1972), 443--468.

\bibitem{RW}  \textsc{I. Raeburn} and  \textsc{D. P. Williams}, \textit{Morita Equivalence and Continuous Trace $C^*$-algebras}, Math. Surveys and Monographs v. 60, Amer. Math. Soc., Philadelphia, 1998.

\bibitem{SCH}  \textsc{I. J. Schoenberg}, \textit{Contributions to the problem of approximation of equidistant data by analytic functions. Parts A and B}, Quart. Appl. Math. \textbf{4} (1946) 45-99, 112--141.

\bibitem{WO}  \textsc{N. E. Wegge-Olsen}, \textit{K-Theory and $C^*$-Algebras}, Oxford University Press, Oxford, UK, 1993.

\bibitem{Wei}  \textsc{J. Weidmann}, \textit{Linear operators in Hilbert spaces}, Translated from the German by Joseph Sz\"{u}cs. Graduate Texts in Mathematics, 68. Springer-Verlag, New York-Berlin, 1980. 

\end{thebibliography}
\end{document}